\documentclass[12pt]{article}
\usepackage[margin=0.75in]{geometry} 
\usepackage{amsmath,amssymb,amsthm}
\usepackage{color}
\usepackage{graphicx}
\usepackage{subfigure}

\newtheorem{theorem}{Theorem}[section]
\newtheorem{lemma}[theorem]{Lemma}
\newtheorem{proposition}[theorem]{Proposition}
\newtheorem{corollary}[theorem]{Corollary}

\newtheorem{rem}{Remark}

\newtheorem{example}{Example}

\numberwithin{equation}{section}

\setlength{\oddsidemargin}{-0.1in}
\setlength{\evensidemargin}{-0.1in}
\setlength{\textheight}{9in}
\setlength{\textwidth}{6.5 in}
\setlength{\topmargin}{-0.55in}

\newcommand{\be}{\begin{equation}}
\newcommand{\ee}{\end{equation}}


\newcommand{\bea}{\begin{eqnarray}}
\newcommand{\eea}{\end{eqnarray}}
\newcommand{\bean}{\begin{eqnarray*}}
\newcommand{\eean}{\end{eqnarray*}}

\newcommand{\bg}{\begin{gather}}
\newcommand{\eg}{\end{gather}}
\newcommand{\bgn}{\begin{gather*}}
\newcommand{\egn}{\end{gather*}}

\title{Center of Mass Distribution of The Jacobi Unitary Ensembles: Painlev\'{e} \uppercase\expandafter{\romannumeral 5}, Asymptotic Expansions}
\author{Longjun Zhan$^{a,}$\thanks{Zhan\_Longjun@foxmail.com},~
Gordon Blower$^{b,}$\thanks{g.blower@lancaster.ac.uk},~Yang Chen$^a$\thanks{ yangbrookchen@yahoo.co.uk}, ~Mengkun Zhu$^{a,}$\thanks{Corresponding Author: Zhu\_mengkun@163.com}\\
\footnotesize{$^{a}$Department of Mathematics, University of Macau,}\\
\footnotesize{Avenida da Universidade, Taipa, Macau, China}\\
\footnotesize{$^{b}$Department of Mathematics and Statistics, Lancaster University,}\\
\footnotesize{ Lancaster, LA14YF, United Kingdom}}

\begin{document}

\maketitle

\begin{abstract}
In this paper, we study the probability density function, $\mathbb{P}(c,\alpha,\beta, n)\,dc$,  of  the center of mass of the finite $n$ Jacobi unitary ensembles with parameters $\alpha\,>-1$ and $\beta >-1$;
that is  the probability that ${\rm tr}M_n\in(c, c+dc),$ where $M_n$ are $n\times n$ matrices drawn from the unitary Jacobi ensembles.
We first compute the exponential moment generating function of the linear statistics $\sum_{j=1}^{n}\,f(x_j):=\sum_{j=1}^{n}x_j,$  denoted by $\mathcal{M}_f(\lambda,\alpha,\beta,n)$.

The weight function associated with the Jacobi unitary ensembles reads $x^{\alpha}(1-x)^{\beta},\; x\in [0,1]$. The moment generating function is the $n\times n$ Hankel determinant $D_n(\lambda,\alpha,\beta)$
generated by the time-evolved Jacobi weight, namely,
$w(x;\lambda ,\alpha,\beta )=x^{\alpha}(1-x)^{\beta}\,{\rm e}^{-\lambda\:x},\,x\in[0,1],\,\alpha>-1,\,\beta>-1$. We  think of $\lambda$ as the time
variable in the resulting Toda equations. The non-classical polynomials  defined by the monomial expansion,
$P_n(x,\lambda)= x^n+ p(n,\lambda)\:x^{n-1}+\dots+P_n(0,\lambda)$, orthogonal with respect to $w(x,\lambda,\alpha,\beta )$ over $[0,1]$ play an important role.
Taking the time evolution problem studied in Basor, Chen and Ehrhardt \cite{BasorChenEhrhardt2010}, with some change of variables, we obtain a
certain auxiliary variable $r_n(\lambda),$ defined by integral over $[0,1]$ of the product of the unconventional orthogonal polynomials of degree $n$ and $n-1$ and
 $w(x;\lambda ,\alpha,\beta )/x$. It is shown that $r_n(2{\rm i}\/{\rm e}^{z})$ satisfies a Chazy II equation. There is another auxiliary variable, denote as $R_n(\lambda),$ defined by an integral over $[0,1]$ of the product of two polynomials of degree $n$ multiplied by $w(x;\lambda ,\alpha,\beta )/x.$ Then $Y_n(-\lambda)=1-\lambda/R_n(\lambda)$  satisfies a particular Painlev\'{e} \uppercase\expandafter{\romannumeral 5}:
$P_{\rm V}(\alpha^2/2$, $ -\beta^2/2, 2n+\alpha+\beta+1,1/2)$.

The $\sigma_n$ function defined in terms of the $\lambda p(n,-\lambda)$ plus a translation in $\lambda$ is the Jimbo--Miwa--Okamoto
$\sigma-$form of Painlev\'{e} \uppercase\expandafter{\romannumeral 5}. The continuum approximation, treating the collection of eigenvalues as a charged fluid as in the Dyson Coulomb Fluid,
gives an approximation for the moment generation function $\mathcal{M}_f(\lambda,\alpha,\beta,n)$ when $n$ is sufficiently large. Furthermore, we deduce a new expression of $\mathcal{M}_f(\lambda,\alpha,\beta,n)$ when $n$ is finite in terms the $\sigma$ function  of this the Painlev\'{e} \uppercase\expandafter{\romannumeral 5}. An estimate shows that the moment generating function is a function of exponential type and of order $n$. From the Paley-Wiener theorem, one deduces that  $\mathbb{P}(c,\alpha,\beta,n)$ has compact support $[0,n]$. This result  is easily extended to the $\beta$ ensembles, as long as
 $w$ the weight is positive and continuous over $[0,1].$
\end{abstract}

\noindent
\section{Introduction}

In random matrix theory, Hankel determinants play a significant role, {\rm e.g.}
the determinants represent the partition functions, moment generating function of linear statistics, or
the distribution of the smallest or largest eigenvalue.
Chen and his collaborators have studied Hankel determinants from the point of view of polynomials orthogonal with respect to unconventional weights, typically involving a family of deformations of a classical weight.  In this paper we consider
\begin{equation}
w(x;\lambda, \alpha,\beta )=x^{\alpha}(1-x)^{\beta}\,{\rm e}^{-\lambda x},\;x\in[0,1],\;\alpha> -1,\;\beta> -1.
\end{equation}
  Here $w(x;0,\alpha,\beta)$ is the standard Jacobi weight on the interval $[0,1]$, and the factor ${\rm e}^{-\lambda x}$
 deforms $w(x; 0,\alpha,\beta)$ to $w(x; \lambda,\alpha,\beta )$.
The $n\times n$ Hankel determinants for $w(x; \lambda,\alpha,\beta )$ satisfy Painlev\'{e} \uppercase\expandafter{} transcendental differential equations in $\lambda$, and recurrence relations in $n$.
There is an extensive literature on the appearance of Painlev\'{e} equations in the unitary ensembles, see for example, \cite{BasorChen2009,ChenZhang2010,ChenMcKay2012,ChenFeigin2006,Mehta2006,TracyWidom1994} and the references therein. The current paper provides a direct and computationally effective approach to the problem, leading to some explicit results.

Generally, let $w(x)$ be a positive weight function on the interval $[0,1]$ and $\mu_{j}:=\int_0^1x^jw(x)\:dx$ be the moments for $j=0, 1, \dots $. We use the handy notation
\[\Delta_n(\vec{x}):=\prod_{1\leq j<k\leq n}(x_k-x_j)=\det \left[x_j^{k-1}\right]_{j,k=1}^{n}\]\\
for the Vandermonde determinant and introduce, as in
 the Andreief--Heine identity, the Hankel determinant
\begin{equation}\label{09}
D_n[w]:=\frac{1}{n!}\int_{[0,1]^n}\Delta_n(\vec{x})^2\prod
_{l=1}^nw(x_l)dx_l=\det\left(\mu_{j+k}\right)_{j,k=0}^{n-1}.
\end{equation}
Hankel determinants play an important role in the study of orthogonal polynomials \cite{Szego1939}, and random matrices.
The joint probability density  function of the Hermitian matrix ensemble for weight $w$ is given by (see \cite{Mehta2006,Weyl1946})
\begin{equation}p(x_{1}, \dots,x_n)=\frac{\Delta_n(\vec{x})^2}{n!D_n[w]}\prod
_{l=1}^nw(x_l),\end{equation}
where $ \{x_l:l=1,\ldots,n\}$ are the real eigenvalues of the complex Hermitian matrices $X$, and the probability measure is invariant under the unitary conjugation $X\mapsto UXU^\dagger$ for unitary $U$ and Hermitian $X$. The linear statistic $Q$ associated with a continuous real function $f$ is the random variable $Q=\sum_{j=1}^n f(x_j)$, where the variables $\{x_j:j=1,2,\dots ,n\}$ are random subject
to the unitary ensemble for the weight $w(x;\lambda,\alpha,\beta)$. In this paper, the large $n$ behavior of the Hankel determinant is obtained from a linear statistics theorem. This follows the approach of \cite{BasorChen2005,BasorChen2001,ChenLawrence1998,MCChen,MCChen2016}.

Suppose $Q$ has a density function denoted by $\mathbb{P}_{f}(Q)$, writing $\delta$ for the Dirac point mass at $0$, we determine $\mathbb{P}_f(Q)$ by the standard formula
\begin{equation}\label{ZZZ}
\mathbb{P}_f(Q)= \int_{[0,1]^n}p(x_1,\ldots,x_n)\delta\left(Q-\sum_{j=1}^nf(x_j)\right)dx_1\ldots dx_n.
\end{equation}

Suppose that $f(x)\geq 0$ for all $x\in[0,1].$ Then the moment generating function of $Q$ is denoted by $\mathcal{M}_f(\lambda,\alpha,\beta,n)$
so  $\mathcal{M}_f(\lambda,\alpha,\beta,n)$ is the Laplace transform of $\mathbb{P}_f(Q)$, in the transform variable $\lambda$. We can express the expectation of ${\rm e}^{-\lambda Q}$ by replacing $w(x)$ by $w(x){\rm e}^{-\lambda f(x)}$ as in
\begin{equation}
\mathcal{M}_f(\lambda,\alpha,\beta,n):=\int_0^\infty \mathbb{P}_f(Q){\rm e}^{-\lambda Q}dQ=\frac{\prod_{j=0}^{n-1}h_j(\lambda)}{\prod_{j=0}^{n-1}h_j(0)},\end{equation}
where
$$h_j(\lambda ):=\int_0^1P_j^2(x)w(x){\rm e}^{-\lambda f(x)}dx ,\; j\in\{0,1,\cdots\},$$
is the square of the $L^2$ norm of the polynomials $\{P_j\}_{j=0}^{\infty}$ orthogonal with respect to $w(x){\rm e}^{-\lambda f(x)}$.

In particular we take $f(x)=x$ to obtain the linear statistic $c=x_1+x_2+\cdots+x_n$, so $c$ is the center of mass of the unitary ensemble for weight $w(x;\lambda,\alpha,\beta)$.
The Hankel determinant generated by $w(x;\lambda,\alpha,\beta)$ which is denoted by
\begin{equation}\label{zhu0}
D_n(\lambda,\alpha,\beta)=\det\left(\mu_{j+k}(\lambda,\alpha,\beta)\right)_{j,k=0}^{n-1}=\det\left(\int_0^1 x^{j+k}x^\alpha(1-x)^\beta {\rm e}^{- \lambda x}dx\right)_{j,k=0}^{n-1}.
\end{equation}

Let $\left\{P_j(x)\right\}_{j=0}^\infty$ be the sequence of monic orthogonal polynomials  with respect to the weight $w(x; \lambda,\alpha,\beta )$, (over [0,1]), where $P_j(x)$ has degree $j$. An immediate consequence of orthogonality is that the polynomials satisfy a three-term recurrence relation, that is, a linear second-order difference equation, involving $P_{n+1}(x),\; P_n(x)~{\rm and}~ P_{n-1}(x)$. The $x$-independent recurrence coefficients, denoted as $\alpha_n(\lambda)$  and $\beta_n(\lambda)$, play an important role in computing the Hankel determinant $D_n(\lambda , \alpha, \beta )$ and ultimately $\mathbb{P}(c,\alpha,\beta,n)$.

This paper is organized as follows. In section two, we derive the Toda molecule equations for $\{P_j\}_{j=0}^{\infty}$ via the three-term recurrence relation for the monic polynomials orthogonal with respect to $w(x;\lambda,\alpha,\beta)=x^{\alpha}(1-x)^{\beta}{\rm e}^{-\lambda x}$, which is a semi-classical weight. We also introduce the ladder operators which raise and lower terms in sequence $\{P_j\}_{j=0}^{\infty}$. The ladder operators involve rational functions
$A_n(z)$ and $B_n(z)$ that have residues $R_n(\lambda)$ and $r_n(\lambda)$, and their properties are the main theme of this paper. We derive a pair of coupled Riccati equations and a pair of first-order difference equation for them; see Theorems 2.4-2.6. While these formulas are rather complicated, we obtain explicit solutions for the special case $\alpha=\beta=1/2$ in terms of Bessel functions of the first kind. These results are consistent with those of Basor, Chen and Ehrhardt \cite{BasorChenEhrhardt2010}, who considered $(1-x)^{\alpha}(1+x)^{\beta}{\rm e}^{-tx}$ on $x\in[-1,1]$. For general $\alpha,\beta>-1$, we do not expect closed form solutions in terms of standard transcendental functions.\par

\indent The ladder operators provide an effective and direct approach towards the Painlev\'e transcendental differential equations. In section 3, we show that, with suitable change of variable, $R_n(\lambda)\rightarrow Y_n(-\lambda)$ satisfies a particular Painlev\'{e} \uppercase\expandafter{\romannumeral 5} with specific initial conditions. Also, $r_n(\lambda)$ satisfies a Chazy II
differential equation. Let $p(n,\lambda)$ be the coefficient of the sub-leading term of our monic polynomials, then $\sigma_n(\lambda)=n\lambda+\lambda\:p(n,-\lambda)-n(n+\beta)$ satisfies the Jimbo--Miwa--Okamoto $\sigma-$form of this Painlev\'{e} \uppercase\expandafter{\romannumeral 5}. These results are of interest in their own right, and are the foundation of the asymptotic analysis in the subsequent sections.

In section 4, we compare the Hankel determinant $D_n(\lambda,\alpha,\beta)$ for the weight
$x^{\alpha}(1-x)^{\beta}{\rm e}^{-\lambda\;x},$ with the Hankel determinant for $D_n(0,\alpha,\beta)$ for the classical Jacobi weight $x^{\alpha}(1-x)^{\beta}$
when $n$ is large. With $f(x)=x$, the ratio $\mathcal{M}_{f}(\lambda,\alpha,\beta,n)=D_n(\lambda,\alpha,\beta)/D_n(0,\alpha,\beta)$
 is the moment generating of the linear statistics $\sum_{j=1}^{n}x_j$. We approximate $\mathcal{M}_f(x,\alpha,\beta,n)$
  for large but finite $n$ by the Dyson's Coulomb fluid approach and then use the Painlev\'e analysis of section 3 to compute the cumulants $\mathbb{P}(c,\alpha,\beta,n)$.
  Our method leads to asymptotic expansions with explicit and computable coefficients.
  In section 5, we replace the weight $x^{\alpha}(1-x)^{\beta}{\rm e}^{-\lambda\;x}$ by
   the the complex function $x^{\alpha}(1-x)^{\beta}{\rm e}^{{\rm i}\lambda\;x}$;
   several of the basic formulas remain valid. Thus we compute the Fourier transform of $D_n(-{\rm i}\lambda,\alpha,\beta)$,
    and hence obtain the probability density function of $c$, $\mathbb{P}(c,\alpha,\beta,n)$. Finally we study the characteristics of asymptotic expressions $\mathbb{P}(c,\alpha,\beta,n)$ denote by $\widehat{\mathbb{P}}(c,\alpha,\beta,n)$.

\section{Toda Evolution and Riccati equations}
Our first purpose in this section is to deduce two coupled Toda type equations. The general Toda hierarchy can be found, in \cite{Adler,Toda1993,Moser1975,Witten1991}. The three-term recurrence relation is an immediate consequence of the orthogonality of of $P_n(z)$, namely,
\begin{equation}
zP_n(z)=P_{n+1}(z)+\alpha_n(\lambda)P_n(z)+\beta_n(\lambda)P_{n-1}(z),
\end{equation}
with the initial conditions
\begin{equation}
P_0(z):=1~~~{\rm and}~~~\beta_0(\lambda)P_{-1}(z):=0.
\end{equation}
Here, $P_j(z)$ depends on $\lambda,\alpha,\beta$ but to simplify notation we do not always display them. Then we write our monic polynomials as,
\begin{equation*}
P_n(z,\lambda)=z^n+p(n,\lambda)z^{n-1}+\cdots+P_0(z,\lambda),
\end{equation*}
with the conditions
\begin{equation*}
P_0(z,\lambda):=1 ~~~{\rm and}~~~  p(0,\lambda):=0.
\end{equation*}
An easy consequence of the recurrence relation is
\begin{align}
\alpha_n(\lambda)&=p(n,\lambda)-p(n+1,\lambda),\label{ReC1}\\
\beta_n(\lambda)&=\frac{h_n(\lambda)}{h_{n-1}(\lambda)}=\frac{D_{n+1}(\lambda)D_{n-1}(\lambda)}{D_n^2(\lambda)}.
\label{ReC2}
\end{align}
From \eqref{ReC1} together with $p(0,\lambda)=0$, we have
\begin{equation}\label{1p1}
\sum_{j=0}^{n-1}\alpha_j(\lambda)=-p(n,\lambda).
\end{equation}
Then after some simple computation we obtain,
\begin{equation}\label{D1Dn}
\frac{d}{d\lambda}\log D_n(\lambda ,\alpha,\beta)=p(n,\lambda
),
\end{equation}
\begin{equation}\label{D1p1}
\frac{d}{d\lambda}p(n,\lambda)=\beta_n(\lambda).
\end{equation}
\\
\begin{proposition}\label{thm1}
The recursion coefficients $\alpha_n(\lambda)$ and $\beta_n(\lambda)$ satisfy the coupled Toda equations
\begin{align}
\beta'_n&=\beta_n(\alpha_{n-1}-\alpha_n) \label{Toda1},\\
\alpha'_n&=\beta_n-\beta_{n+1} \label{Toda2},
\end{align}
and the Toda molecule equation, see \cite{sogo1993},
\begin{equation}\label{Todam}
\frac{d^2}{d\lambda^2}\log D_n(\lambda ,\alpha,\beta)=\frac{D_{n+1}(\lambda ,\alpha,\beta)D_{n-1}(\lambda ,\alpha,\beta)}{D_n^2(\lambda ,\alpha,\beta)}.
\end{equation}
\end{proposition}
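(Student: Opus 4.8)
The plan is to build all three assertions on a single structural feature of the deformation, namely $\partial_\lambda w(x;\lambda,\alpha,\beta)=-x\,w(x;\lambda,\alpha,\beta)$, together with the elementary degree count that $\partial_\lambda P_n$ is a polynomial in $x$ of degree at most $n-1$ (the leading coefficient of the monic $P_n$ being the constant $1$, hence $\lambda$-independent). With this in hand the relations \eqref{ReC1}, \eqref{ReC2}, \eqref{D1Dn} and \eqref{D1p1} recorded above reduce everything to short manipulations, and I would organize the argument from the easiest identity to the most substantive one.

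I would dispatch the Toda molecule equation \eqref{Todam} and the coefficient equation \eqref{Toda2} first, as both are essentially formal. For \eqref{Todam}, differentiating \eqref{D1Dn} a second time gives $\frac{d^2}{d\lambda^2}\log D_n=\frac{d}{d\lambda}p(n,\lambda)$, which \eqref{D1p1} identifies as $\beta_n(\lambda)$, and the second equality in \eqref{ReC2} rewrites $\beta_n$ as $D_{n+1}D_{n-1}/D_n^2$, exactly \eqref{Todam}. For \eqref{Toda2} I would differentiate \eqref{ReC1} in $\lambda$: from $\alpha_n=p(n,\lambda)-p(n+1,\lambda)$ one gets $\alpha_n'=p'(n,\lambda)-p'(n+1,\lambda)$, and applying \eqref{D1p1} to each term yields $\beta_n-\beta_{n+1}$.

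The genuinely substantive step, and the one I expect to carry the real content, is \eqref{Toda1}. Here I would take the logarithmic $\lambda$-derivative of $\beta_n=h_n/h_{n-1}$, which reduces the claim to computing $(\log h_n)'$. To evaluate this I differentiate $h_n=\int_0^1 P_n^2\,w\,dx$ directly: the cross term $2\int_0^1 P_n(\partial_\lambda P_n)\,w\,dx$ vanishes by orthogonality because $\partial_\lambda P_n$ has degree at most $n-1$, while the term coming from $\partial_\lambda w=-xw$ is $-\int_0^1 x P_n^2\,w\,dx$, which the three-term recurrence collapses to $-\alpha_n h_n$. Hence $(\log h_n)'=-\alpha_n$, and subtracting this identity at index $n-1$ gives $\beta_n'/\beta_n=\alpha_{n-1}-\alpha_n$, i.e. \eqref{Toda1}. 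As a cross-check one can bypass the integral entirely by taking the logarithmic derivative of $\beta_n=D_{n+1}D_{n-1}/D_n^2$ and using \eqref{D1Dn} three times to obtain $p(n+1,\lambda)+p(n-1,\lambda)-2p(n,\lambda)$, which \eqref{ReC1} recognizes as the telescoped difference $\alpha_{n-1}-\alpha_n$. The only point requiring care in either route is the degree-lowering property of $\partial_\lambda$ on the monic family and the bookkeeping of which terms survive orthogonality; once that is secured the conclusion is purely algebraic.
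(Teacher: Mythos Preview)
Your proposal is correct and aligns with the paper's approach: the paper does not write out a proof of this proposition, but states it immediately after recording \eqref{ReC1}, \eqref{ReC2}, \eqref{D1Dn} and \eqref{D1p1}, clearly intending the result to follow from those identities by routine manipulation. Your derivation carries this out precisely---\eqref{Todam} from \eqref{D1Dn}, \eqref{D1p1}, \eqref{ReC2}; \eqref{Toda2} from differentiating \eqref{ReC1} and invoking \eqref{D1p1}; and \eqref{Toda1} from either the direct computation of $h_n'$ via $\partial_\lambda w=-xw$ and orthogonality, or the logarithmic derivative of the second equality in \eqref{ReC2} combined with \eqref{D1Dn} and \eqref{ReC1}---so there is nothing to correct or contrast.
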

In what follows, we will obtain two coupled Riccati equations based on ladder operators.
The ladder operators, also called lowering and raising operators, have been applied by many authors; see for example,
\cite{BasorChen2009,Bauldry1990,Bonan1990,ChenIsmail2005,ChenIsmail1997}. In our case,  they read
\begin{align}
&\left(\frac{d}{dz}+B_n(z)\right)P_n(z)=\beta_nA_n(z)P_{n-1}(z) \label{lr1},\\
&\left(\frac{d}{dz}-B_n(z)-v^\prime(z)\right)P_n(z)=-A_{n-1}(z)P_n(z) \label{lr2},
\end{align}
where
\begin{align}
&A_n(z):=\frac{1}{h_n}\int_0^1\frac{v^\prime(z)-v^\prime(y)}{z-y}P^2_n(y)w(y)dy,\\
&B_n(z):=\frac{1}{h_{n-1}}\int_0^1\frac{v^\prime(z)-v^\prime(y)}{z-y}P_n(y)P_{n-1}(y)w(y)dy.
\end{align}
Here $w(x)={\rm e}^{-v(x)}$ and we assumed the $w(0)=w(1)=0$.

Then we obtain two fundamental supplementary conditions $(S_1),$ $(S_2)$  and a ``sum-rule"  $(S_2')$, valid for all $z$,
$$B_{n+1}(z)+B_n(z)=(z-\alpha_n)A_n(z)-v^\prime(z) \eqno{(S_1)}$$
$$1+(z-\alpha_n)(B_{n+1}(z)-B_n(z))=\beta _{n+1}A_{n+1}(z)-\beta_nA_{n-1}(z) \eqno{(S_2)}$$
$$B_n^2(z)+v^\prime(z)B_n(z)+\sum_{j=0}^{n-1}A_j(z)=\beta_nA_n(z)A_{n-1}(z).\eqno{(S_2^\prime)}$$
supplemented by the `initial' conditions,
\begin{equation*}
B_0(z)=A_{-1}(z)=0.
\end{equation*}
The equations of ($S'_2$) will be highly useful in what follows. Equations ($S_1$), ($S_2$) and ($S'_2$) can also be found in
\cite{ChenIts2010,ChenIsmail2005,ChenIsmail1997,Magnus,TracyWidom1994}.
In our problem, the linear statistic for $f(x)=x$, and the correspondingly deformed weight becomes
$$w(x;\alpha,\beta,\lambda)=x^\alpha(1-x)^\beta {\rm e}^{- \lambda x},~~x\in[0,1],~\alpha>-1,\beta>-1.$$
\begin{proposition}\label{pro2.2}
The coefficients $A_n(z)$ and $B_n(z)$ appearing in the ladder operators (obtained via integration by parts) are
\begin{align}
A_n(z)&=\frac{R_n(\lambda)}{z}+\frac{\lambda-R_n(\lambda)}{z-1}, \label{An}\\
B_n(z)&=\frac{r_n(\lambda)}{z}-\frac{n+r_n(\lambda)}{z-1},\label{Bn}
\end{align}
where
\begin{align*}
R_n(\lambda)&:=\frac{\alpha}{h_n}\int_0^1\frac{P_n^2(y)}{y}y^\alpha(1-y)^\beta {\rm e}^{-\lambda y}dy,\\
r_n(\lambda)&:=\frac{\alpha}{h_{n-1}}\int_0^1\frac{P_n(y)P_{n-1}(y)}{y}y^\alpha(1-y)^\beta{\rm e}^{-\lambda y}dy.
\end{align*}
\end{proposition}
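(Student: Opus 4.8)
The plan is to reduce both formulas to a single computation of the divided difference $\frac{v'(z)-v'(y)}{z-y}$, followed by an integration-by-parts identity that converts the ``wrong'' integrals into the quantities $R_n$ and $r_n$. Writing $w(x)={\rm e}^{-v(x)}$ with $v(x)=-\alpha\ln x-\beta\ln(1-x)+\lambda x$, we have $v'(x)=-\frac{\alpha}{x}+\frac{\beta}{1-x}+\lambda$. The constant $\lambda$ cancels in the difference $v'(z)-v'(y)$, and the two rational pieces telescope cleanly:
$$\frac{v'(z)-v'(y)}{z-y}=\frac{\alpha}{zy}+\frac{\beta}{(1-z)(1-y)}.$$
This is the only place the specific weight enters, and it already exhibits the pole structure at $z=0$ and $z=1$ seen in the claimed formulas.

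First I would substitute this into the defining integrals for $A_n$ and $B_n$ and separate the $z$-dependence. Each splits into a term proportional to $\frac{1}{z}$ and a term proportional to $\frac{1}{1-z}$. By the very definitions of $R_n(\lambda)$ and $r_n(\lambda)$ the $\frac{1}{z}$-coefficients are exactly $R_n(\lambda)$ and $r_n(\lambda)$, so
$$A_n(z)=\frac{R_n(\lambda)}{z}+\frac{1}{1-z}\cdot\frac{\beta}{h_n}\int_0^1\frac{P_n^2(y)}{1-y}w(y)\,dy,$$
and likewise for $B_n(z)$ with $P_n^2/h_n$ replaced by $P_nP_{n-1}/h_{n-1}$. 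Thus the whole statement reduces to evaluating the two ``$1/(1-y)$'' integrals and checking that they produce the residues $\lambda-R_n$ and $-(n+r_n)$ at $z=1$ (recalling $\frac{1}{1-z}=-\frac{1}{z-1}$).

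The heart of the argument --- the step labelled ``integration by parts'' in the statement --- is to integrate $\frac{d}{dy}\big[P_n^2(y)w(y)\big]$ and $\frac{d}{dy}\big[P_n(y)P_{n-1}(y)w(y)\big]$ over $[0,1]$. Since $w(0)=w(1)=0$ the boundary terms vanish and both integrals equal zero. Expanding via $w'=-v'w$ and using orthogonality I would collect the terms: in the first case $\int_0^1 P_nP_n'w\,dy=0$ (because $P_n'$ has degree $n-1$) and $\int_0^1 P_n^2 w\,dy=h_n$, while the $\frac{\alpha}{y}$ and $\frac{\beta}{1-y}$ pieces of $v'$ reproduce $R_nh_n$ and the unknown integral; this yields $\frac{\beta}{h_n}\int_0^1\frac{P_n^2}{1-y}w\,dy=R_n-\lambda$, giving the $A_n$ formula. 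In the second case the key evaluations are $\int_0^1 P_n'P_{n-1}w\,dy=n\,h_{n-1}$ (reading off the leading coefficient in $P_n'=nP_{n-1}+\cdots$), $\int_0^1 P_nP_{n-1}'w\,dy=0$, and $\int_0^1 P_nP_{n-1}w\,dy=0$; these combine to give $\frac{\beta}{h_{n-1}}\int_0^1\frac{P_nP_{n-1}}{1-y}w\,dy=n+r_n$, which is exactly the residue needed for $B_n$.

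I expect the computation of $\frac{v'(z)-v'(y)}{z-y}$ to be routine; the only genuine care is in the orthogonality bookkeeping of the integration-by-parts step, especially the identity $\int_0^1 P_n'P_{n-1}w\,dy=n\,h_{n-1}$, which is what injects the explicit $n$ into $B_n$. A secondary point worth flagging is the boundary condition $w(0)=w(1)=0$: it holds literally for $\alpha,\beta>0$, and the extension to the full range $\alpha,\beta>-1$ (where the integrals remain convergent) is by analytic continuation in the parameters.
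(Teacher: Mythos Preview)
Your argument is correct in every detail: the divided-difference identity $\frac{v'(z)-v'(y)}{z-y}=\frac{\alpha}{zy}+\frac{\beta}{(1-z)(1-y)}$ is right, the integration-by-parts step with vanishing boundary terms yields exactly $Q_n:=\frac{\beta}{h_n}\int_0^1\frac{P_n^2}{1-y}w\,dy=R_n-\lambda$ and $q_n:=\frac{\beta}{h_{n-1}}\int_0^1\frac{P_nP_{n-1}}{1-y}w\,dy=n+r_n$, and the orthogonality bookkeeping (in particular $\int P_n'P_{n-1}w=nh_{n-1}$) is handled correctly. The paper itself gives no proof here at all---it simply cites \cite{BasorChenEhrhardt2010}, where the analogous computation is carried out for the weight $(1-x)^\alpha(1+x)^\beta{\rm e}^{-tx}$ on $[-1,1]$; your argument is precisely the standard derivation one would find there, transplanted to $[0,1]$, so you have in effect reconstructed the referenced proof.
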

\begin{proof}
See \cite{BasorChenEhrhardt2010}.
\end{proof}
Ultimately, the recurrence coefficients may be expressed in terms $r_n(\lambda)$ and $R_n(\lambda).$
\\
To begin with, substituting \eqref{An} and \eqref{Bn} into  $(S_1)$ and $(S'_2)$, we obtain
\begin{align}
&r_n^2-\alpha r_n=\beta_nR_nR_{n-1},\label{1S'2}\\
&(r_n+n)^2+\beta(r_n+n)=\beta_n(R_n-\lambda)(R_{n-1}-\lambda), \label{2S'2}
\end{align}
\begin{equation}
2r_n(r_n+n)-\alpha r_n+\beta r_n-\alpha n+\lambda r_n+\sum_{j=0}^{n-1}R_j=\beta_n\left[R_n\left(R_{n-1}-\lambda\right)+R_{n-1}\left(R_n-\lambda\right)\right]. \label{3S'2}
\end{equation}
After easy computations, we have,
\begin{proposition} The recurrence coefficients $\alpha_n$, $\beta_n$  are expressed in terms of $R_{n}$, $r_{n}$ as,
\begin{align}
\lambda\alpha_n&=2n+1+\alpha+\beta+\lambda-R_n, \label{C1}\\
\beta_n(\lambda^2-\lambda R_n)&=n(n+\beta)+(2n+\alpha+\beta)r_n+\frac{\lambda}{R_n}\left(r_n^2-\alpha r_n\right).
\label{C2}
\end{align}
\end{proposition}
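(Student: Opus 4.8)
The plan is to obtain both identities by inserting the explicit partial-fraction forms \eqref{An} and \eqref{Bn}, together with $v'(z)=-\alpha/z-\beta/(z-1)+\lambda$, into the supplementary conditions $(S_1)$ and $(S_2')$, and then matching residues. After substitution both sides of each identity are rational functions of $z$ whose only poles lie at $z=0$ and $z=1$, so an identity valid for all $z$ holds if and only if the coefficients of $1/z$ and $1/(z-1)$ (and, for $(S_2')$, also of $1/z^2$ and $1/(z-1)^2$) agree on the two sides. Throughout I would use the single partial-fraction rule $\frac{1}{z(z-1)}=\frac{1}{z-1}-\frac{1}{z}$ to reduce every cross term to simple poles.

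For \eqref{C1} I would work with $(S_1)$. Expanding $(z-\alpha_n)A_n(z)-v'(z)$ and collecting, the right-hand side becomes $\frac{\alpha-\alpha_n R_n}{z}+\frac{(\lambda-R_n)(1-\alpha_n)+\beta}{z-1}$ with no constant term, while the left-hand side $B_{n+1}(z)+B_n(z)$ equals $\frac{r_{n+1}+r_n}{z}-\frac{2n+1+r_{n+1}+r_n}{z-1}$. Matching the $1/z$ coefficients gives $\alpha_n R_n=\alpha-(r_{n+1}+r_n)$, and matching the $1/(z-1)$ coefficients gives a second relation. Substituting the first into the second makes both the quantity $r_{n+1}+r_n$ and the product $\alpha_n R_n$ cancel simultaneously, leaving precisely $\lambda\alpha_n=2n+1+\alpha+\beta+\lambda-R_n$, which is \eqref{C1}.

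For \eqref{C2} I would turn to $(S_2')$. After substitution, matching the coefficient of $1/z^2$ reproduces \eqref{1S'2}, namely $r_n^2-\alpha r_n=\beta_n R_n R_{n-1}$, and matching the coefficient of $1/(z-1)^2$ reproduces \eqref{2S'2}, namely $(r_n+n)^2+\beta(r_n+n)=\beta_n(R_n-\lambda)(R_{n-1}-\lambda)$; the simple-pole coefficients give \eqref{3S'2}, which is not needed here. The key move is to eliminate the neighbouring residue $R_{n-1}$. Expanding \eqref{2S'2} as $\beta_n R_n R_{n-1}-\lambda\beta_n R_n-\lambda\beta_n R_{n-1}+\lambda^2\beta_n$ and then substituting $\beta_n R_n R_{n-1}=r_n^2-\alpha r_n$ and $\beta_n R_{n-1}=(r_n^2-\alpha r_n)/R_n$, both read off from \eqref{1S'2}, removes $R_{n-1}$ entirely. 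Collecting the remaining terms, the expression $(r_n+n)^2+\beta(r_n+n)-(r_n^2-\alpha r_n)$ simplifies to $n(n+\beta)+(2n+\alpha+\beta)r_n$, and one is left with $\beta_n(\lambda^2-\lambda R_n)=n(n+\beta)+(2n+\alpha+\beta)r_n+\frac{\lambda}{R_n}(r_n^2-\alpha r_n)$, which is \eqref{C2}.

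The computation is elementary once the poles are sorted, so I do not expect a genuine obstacle; the only point requiring care is the bookkeeping. In particular, $(S_2')$ is a rational identity carrying both double and simple poles at $z=0,1$, so it yields four residue equations while only the two double-pole ones feed into \eqref{C2}; one should keep track of which equation supplies which relation and verify that the over-determined system is consistent. A minor check worth recording is that the constant, pole-free parts match on each side, confirming that no term has been dropped in the partial-fraction reductions.
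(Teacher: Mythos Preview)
Your proposal is correct and follows essentially the same route as the paper: substitute \eqref{An}, \eqref{Bn} and $v'(z)=-\alpha/z-\beta/(z-1)+\lambda$ into $(S_1)$ and $(S_2')$, read off the residue relations (the paper records these as \eqref{1S'2}--\eqref{3S'2}), and then perform the straightforward eliminations you describe to obtain \eqref{C1} and \eqref{C2}. The paper merely writes ``after easy computations,'' so your write-up is a faithful and more detailed version of the intended argument.
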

\begin{theorem}\label{thm1}
The auxiliary variables $r_n(\lambda)$ and $R_n(\lambda)$ satisfy coupled Riccati equations
\begin{align}
\lambda R'_n&=-\alpha\lambda +R_n(2n+1+\alpha+\beta+\lambda)-R_n^2+2\lambda r_n~,  \label{DR}\\
r'_n&=\frac{R_n}{\lambda R_n-\lambda^2}\left[n(n+\beta)+(2n+\alpha+\beta)r_n+\frac{\lambda}{R_n}(r_n^2-\alpha r_n)\right]+\frac{r_n^2-\alpha r_n}{R_n}.\label{020}
\end{align}
\end{theorem}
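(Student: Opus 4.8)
The plan is to obtain both Riccati equations by differentiating the integral representations of $R_n(\lambda)$ and $r_n(\lambda)$ from Proposition~\ref{pro2.2} directly in the time variable $\lambda$, and then eliminating every quantity other than $R_n$, $r_n$ by means of the algebraic relations \eqref{C1}, \eqref{C2} and \eqref{1S'2}. The engine of the computation is the time-evolution of the monic orthogonal polynomials. Since $\partial_\lambda w(y)=-y\,w(y)$ and $\partial_\lambda P_n$ is a polynomial of degree at most $n-1$, I would first differentiate the orthogonality relations $\int_0^1 P_nP_k\,w\,dy=h_n\delta_{nk}$ in $\lambda$ and expand $\partial_\lambda P_n$ in the basis $\{P_k\}$; using the three-term recurrence relation to evaluate the surviving coefficient yields the two auxiliary identities
\begin{equation*}
\partial_\lambda P_n=\beta_n P_{n-1},\qquad \frac{h_n'}{h_n}=-\alpha_n,
\end{equation*}
the first of which is consistent with \eqref{D1p1}. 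These feed into every subsequent step.

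For \eqref{DR} I would differentiate $R_n=\frac{\alpha}{h_n}\int_0^1 y^{-1}P_n^2\,w\,dy$ under the integral sign, producing three contributions. The logarithmic derivative of $h_n^{-1}$ gives $\alpha_n R_n$ via $h_n'/h_n=-\alpha_n$; the term from $\partial_\lambda P_n=\beta_n P_{n-1}$ collapses to $2r_n$ once the integral $\int_0^1 y^{-1}P_nP_{n-1}w\,dy$ is recognised as $h_{n-1}r_n/\alpha$ and $\beta_n=h_n/h_{n-1}$ is used; and the term from $\partial_\lambda w=-y\,w$ leaves $-\alpha$, since the factor $y$ cancels the $y^{-1}$ and the normalisation $\frac1{h_n}\int_0^1 P_n^2w\,dy=1$ applies. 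This gives $R_n'=\alpha_n R_n+2r_n-\alpha$, and substituting $\lambda\alpha_n=2n+1+\alpha+\beta+\lambda-R_n$ from \eqref{C1} produces \eqref{DR} immediately.

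Equation \eqref{020} is the more delicate one. Differentiating $r_n=\frac{\alpha}{h_{n-1}}\int_0^1 y^{-1}P_nP_{n-1}w\,dy$ yields four pieces: the $\partial_\lambda w$ term vanishes by orthogonality, the $h_{n-1}^{-1}$ piece gives $\alpha_{n-1}r_n$, and $\partial_\lambda P_n=\beta_n P_{n-1}$ contributes $\beta_n R_{n-1}$. The main obstacle is the remaining integral $\int_0^1 y^{-1}P_nP_{n-2}\,w\,dy$ coming from $\partial_\lambda P_{n-1}=\beta_{n-1}P_{n-2}$, which is not one of the named auxiliary quantities. I would handle it with the recurrence in the form $\beta_{n-1}P_{n-2}=yP_{n-1}-P_n-\alpha_{n-1}P_{n-1}$, rewriting $\beta_{n-1}y^{-1}P_nP_{n-2}$ as $P_nP_{n-1}-y^{-1}P_n^2-\alpha_{n-1}y^{-1}P_nP_{n-1}$; integrating against $w$ and identifying the surviving integrals with $R_n$ and $r_n$ turns this piece into $-\beta_n R_n-\alpha_{n-1}r_n$. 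The two $\alpha_{n-1}r_n$ contributions then cancel, leaving the compact intermediate identity $r_n'=\beta_n(R_{n-1}-R_n)$.

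Finally I would remove $R_{n-1}$ and $\beta_n$: \eqref{1S'2} gives $\beta_nR_{n-1}=(r_n^2-\alpha r_n)/R_n$, while \eqref{C2} supplies $\beta_n R_n$ in terms of $R_n$ and $r_n$. Substituting both into $r_n'=\beta_n R_{n-1}-\beta_n R_n$ and writing $\lambda^2-\lambda R_n=-(\lambda R_n-\lambda^2)$ reproduces \eqref{020} exactly. The only genuinely non-routine step is the elimination of the off-diagonal integral $\int_0^1 y^{-1}P_nP_{n-2}w\,dy$ through the recurrence relation; everything else is bookkeeping with the identities already established in this section.
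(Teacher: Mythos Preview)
Your argument is correct. The two time-evolution identities $\partial_\lambda P_n=\beta_nP_{n-1}$ and $h_n'/h_n=-\alpha_n$ are derived exactly as you indicate, your computation of $R_n'=\alpha_nR_n+2r_n-\alpha$ and $r_n'=\beta_n(R_{n-1}-R_n)$ is clean, and the final substitutions via \eqref{C1}, \eqref{1S'2}, \eqref{C2} recover \eqref{DR} and \eqref{020} verbatim. The paper does not spell out a proof for this theorem; it merely assembles the Toda relations (Proposition~\ref{thm1}) and the algebraic identities \eqref{1S'2}--\eqref{C2} and then states the Riccati pair, implicitly deferring to the computation in \cite{BasorChenEhrhardt2010}. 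That computation is precisely the direct differentiation of the integral representations that you carry out here, so your route is the intended one and not an alternative.
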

\begin{theorem}\label{th2.5}
The auxiliary variables $r_n(\lambda)$ and $R_n(\lambda),$ satisfy non-linear second order ordinary differential equations
\begin{align}
R''_n=&\frac{1}{2\lambda^2(R_n-\lambda)R_n}\big\{(2R_n-\lambda )(\lambda R'_n)^2-2\lambda R_n^2R'_n+2R_n^5-2\alpha^2\lambda^2R_n+\alpha^2\lambda^3 \notag \\
&-[2(2n+1+\alpha+\beta)+5\lambda]R_n^4+4\lambda(2n+1+\alpha+\beta+\lambda)R_n^3\notag\\
&-[\lambda^3-\lambda(1+\alpha^2-\beta^2)+2 \lambda^{2}(2n+1+\alpha+\beta)]R_n^2\big\}, \label{D2R}\\
\big[\lambda^2r''_n&+8 r_n^3+6(2n-\alpha+\beta)r_n^2+4(n^2-2n\alpha+n\beta-\alpha\beta)r_n-2n(n+\beta)\alpha+\lambda
r'_n\big]^{2}\notag\\
& =\left(4r_n+\lambda+2n-\alpha+\beta\right)^{2}\left[4r_n(r_n-\alpha)(r_n+n)(r_n+n+\beta)+(\lambda r'_n)^2\right].\label{D2r}
\end{align}
\end{theorem}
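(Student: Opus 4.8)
The two equations in Theorem~\ref{th2.5} are both obtained by decoupling the Riccati system \eqref{DR}--\eqref{020}: the equation for $R_n$ is a direct substitution, whereas the equation for $r_n$ requires eliminating $R_n$ from a quadratic, which is exactly what produces the squared form of \eqref{D2r}. Throughout I work at fixed $n$ and assume the generic non-degeneracy $\lambda\neq0$, $R_n\neq0$, $R_n\neq\lambda$, so the denominators below are legitimate.

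For \eqref{D2R} the plan is mechanical. Since \eqref{DR} is linear in $r_n$, I would first solve it,
\begin{equation*}
2\lambda r_n=R_n^2-(2n+1+\alpha+\beta+\lambda)R_n+\alpha\lambda+\lambda R_n',
\end{equation*}
and then differentiate in $\lambda$ to express $r_n'$ through $R_n,R_n',R_n''$ and $\lambda$. Substituting both $r_n$ and $r_n'$ into \eqref{020}, clearing the denominators $\lambda(R_n-\lambda)$, $R_n$ and $2\lambda$, and isolating the $R_n''$ term, I expect to recover \eqref{D2R} with exactly the stated prefactor $\frac{1}{2\lambda^2(R_n-\lambda)R_n}$. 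The only care needed here is bookkeeping of the polynomial coefficients of the various powers of $R_n$.

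For \eqref{D2r} the key observation is that \eqref{020} contains no $R_n'$, so after multiplying through by $\lambda R_n(R_n-\lambda)$ it becomes a \emph{quadratic} in $R_n$,
\begin{equation*}
\big(\lambda r_n'-n(n+\beta)-(2n+\alpha+\beta)r_n\big)R_n^2-\big(\lambda^2 r_n'+2\lambda(r_n^2-\alpha r_n)\big)R_n+\lambda^2(r_n^2-\alpha r_n)=0,
\end{equation*}
which I call $Q=0$. A short computation shows that its discriminant equals
\begin{equation*}
\lambda^2\big[4r_n(r_n-\alpha)(r_n+n)(r_n+n+\beta)+(\lambda r_n')^2\big],
\end{equation*}
precisely $\lambda^2$ times the bracket on the right of \eqref{D2r}; here one uses $r_n^2-\alpha r_n=r_n(r_n-\alpha)$ and $r_n^2+(2n+\beta)r_n+n(n+\beta)=(r_n+n)(r_n+n+\beta)$. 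Thus solving $Q=0$ determines $R_n$ in terms of $r_n,r_n'$ up to this square root. I would then impose compatibility with \eqref{DR}: differentiating $Q=0$ in $\lambda$ introduces $R_n'$ and $r_n''$, and \eqref{DR} is used to remove $R_n'$. Eliminating $R_n$ between this differentiated relation and $Q=0$ (by a resultant, or equivalently by extracting from the combination a relation $R_n=N(r_n,r_n',r_n'')/(4r_n+\lambda+2n-\alpha+\beta)$ and re-substituting into $Q$) is what clears the common factor $4r_n+\lambda+2n-\alpha+\beta$ as a square and, together with the discriminant identity above, yields \eqref{D2r}. The squaring is unavoidable precisely because $R_n$ is an irrational function of $(r_n,r_n')$.

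The main obstacle is entirely computational. The $R_n$ derivation is routine but long; the genuine difficulty is in \eqref{D2r}, namely carrying out the elimination of $R_n$ without sign errors and verifying that the resultant factors into the stated perfect-square form, with the left-hand polynomial of degree $3$ in $r_n$ (and linear in $r_n''$) matching the claimed coefficients $8r_n^3+6(2n-\alpha+\beta)r_n^2+\cdots$. The discriminant identity displayed above is the decisive checkpoint: once it is confirmed, the appearance of the factor $(4r_n+\lambda+2n-\alpha+\beta)^2$ and of the bracket on the right of \eqref{D2r} is forced, and the remaining work is to certify the left-hand side.
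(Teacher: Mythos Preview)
Your approach is correct and is exactly the standard decoupling procedure implicit in the paper: the paper states Theorem~\ref{th2.5} immediately after the coupled Riccati equations of Theorem~\ref{thm1} without giving a separate proof, so the intended derivation is precisely to eliminate $r_n$ (respectively $R_n$) from \eqref{DR}--\eqref{020}. Your identification of \eqref{020} as a quadratic in $R_n$ with discriminant $\lambda^2\bigl[4r_n(r_n-\alpha)(r_n+n)(r_n+n+\beta)+(\lambda r_n')^2\bigr]$ is the key structural observation, and it is correct.
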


In addition to the coupled Riccati equation, $r_n(\lambda)$ and $R_n(\lambda)$ also satisfied a pair of coupled nonlinear first order difference equations.
\begin{theorem}\label{th2.6}The auxiliary quantities $r_n(\lambda)$ and $R_n(\lambda)$ satisfy the coupled difference equations
\begin{align}
  &\lambda(r_{n+1}+r_n)=R_n^2-R_n(2n+1+\alpha+\beta+\lambda)+\lambda\alpha,\\
 &n(n+\beta)+(2n+\alpha+\beta)r_n=(r_n^2-\alpha r_n)\left(\frac{\lambda^2}{R_nR_{n-1}}-\frac{\lambda}{R_n}-\frac{\lambda}{R_{n-1}}\right),
\end{align}
for $n=0,1, \dots$ with the `initial' conditions
\begin{equation}
 r_0(\lambda)=0,~~~~~~R_0(\lambda)=\frac{(\alpha+\beta+1)M(\alpha;1+\alpha+\beta;-\lambda)}{M(1+\alpha;2+\alpha+\beta;-\lambda)},\\
\end{equation}
where $M(a;b;z)$ is the Kummer function.
\end{theorem}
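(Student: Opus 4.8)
The plan is to derive both difference equations by purely algebraic manipulation of the supplementary conditions and the explicit rational forms of $A_n$ and $B_n$ in Proposition~\ref{pro2.2}, and to read off the initial data directly from the integral definitions of $r_n$ and $R_n$. No new machinery beyond the relations $(S_1)$, \eqref{1S'2}, \eqref{C1}, \eqref{C2} is needed.

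For the first difference equation I would start from $(S_1)$, insert the partial-fraction expressions \eqref{An} and \eqref{Bn} together with $v'(z)=-\alpha/z-\beta/(z-1)+\lambda$, and compare residues at the simple pole $z=0$. The left side contributes $r_{n+1}+r_n$, while expanding $(z-\alpha_n)A_n(z)-v'(z)$ and reading off the coefficient of $1/z$ gives $\alpha-R_n\alpha_n$; hence $r_{n+1}+r_n=\alpha-R_n\alpha_n$. Substituting \eqref{C1} for $\lambda\alpha_n$ and clearing the denominator yields
\[
\lambda(r_{n+1}+r_n)=R_n^2-R_n(2n+1+\alpha+\beta+\lambda)+\lambda\alpha .
\]
The residue at $z=1$ contributes no new information, since after using the $z=0$ relation it collapses back to \eqref{C1}.

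For the second equation the key move is to eliminate $\beta_n$ between two relations already in hand. Equation \eqref{1S'2} gives $\beta_n=(r_n^2-\alpha r_n)/(R_nR_{n-1})$, which I would substitute into the expression \eqref{C2} for $\beta_n(\lambda^2-\lambda R_n)$. The left side becomes $(r_n^2-\alpha r_n)\bigl(\lambda^2/(R_nR_{n-1})-\lambda/R_{n-1}\bigr)$; transferring the term $(\lambda/R_n)(r_n^2-\alpha r_n)$ from the right to the left then produces
\[
n(n+\beta)+(2n+\alpha+\beta)r_n=(r_n^2-\alpha r_n)\Bigl(\frac{\lambda^2}{R_nR_{n-1}}-\frac{\lambda}{R_n}-\frac{\lambda}{R_{n-1}}\Bigr),
\]
which is the asserted relation; the level $n-1$ enters precisely through the elimination of $\beta_n$, making this a genuine first-order difference equation in $n$. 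The initial value $r_0(\lambda)=0$ is immediate from the ladder-operator initial condition $B_0(z)=0$, which forces $r_0/z-r_0/(z-1)=0$ in \eqref{Bn} (equivalently, the defining integral for $r_0$ contains $P_{-1}\equiv 0$).

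It remains to evaluate $R_0$. With $P_0\equiv 1$ its definition reads $R_0(\lambda)=(\alpha/h_0)\int_0^1 y^{\alpha-1}(1-y)^\beta {\rm e}^{-\lambda y}\,dy$ with $h_0=\int_0^1 y^{\alpha}(1-y)^\beta {\rm e}^{-\lambda y}\,dy$. Both are instances of the Euler-type representation $\int_0^1 t^{a-1}(1-t)^{c-a-1}{\rm e}^{zt}\,dt=B(a,c-a)\,M(a;c;z)$, so the numerator equals $B(\alpha,\beta+1)M(\alpha;1+\alpha+\beta;-\lambda)$ and $h_0=B(\alpha+1,\beta+1)M(1+\alpha;2+\alpha+\beta;-\lambda)$. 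Forming the quotient, the Gamma factors collapse via $\alpha\Gamma(\alpha)=\Gamma(\alpha+1)$ and $\Gamma(\alpha+\beta+2)=(\alpha+\beta+1)\Gamma(\alpha+\beta+1)$, leaving the stated closed form. I expect the only genuine subtlety to lie here: the integral representation of $M(a;c;z)$ needs $\Re a>0$, i.e.\ $\alpha>0$, so for $-1<\alpha\le 0$ the formula for $R_0$ must be justified by analytic continuation in $\alpha$, both sides being analytic there. Everything else is routine bookkeeping with the relations established earlier in this section.
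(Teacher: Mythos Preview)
Your proposal is correct and follows precisely the route the paper has in mind: the paper does not spell out a proof of this theorem, treating the two difference equations as immediate consequences of the supplementary conditions $(S_1)$, $(S_2')$ and the relations \eqref{1S'2}, \eqref{C1}, \eqref{C2} already derived, with the initial data read off from the integral definitions in Proposition~\ref{pro2.2}. Your residue computation from $(S_1)$ for the first equation, the elimination of $\beta_n$ between \eqref{1S'2} and \eqref{C2} for the second, and the Kummer-integral evaluation of $R_0$ (with the analytic-continuation caveat for $-1<\alpha\le 0$) are exactly the steps one expects, and the algebra checks out.
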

From Proposition \ref{pro2.2} and Theorem \ref{th2.6}, one could, in principle, obtain the $R_n(\lambda)$ and $r_n(\lambda)$, iteratively, step by step in $n.$ 

To check that the integral representation for $r_n$ given by Proposition \ref{pro2.2} makes sense, note that,
\begin{equation}\label{1r1}
r_1(\lambda)=\alpha -\frac{(\alpha +1)(\alpha +\beta+2)  \, M(\alpha ;\alpha +\beta +1;-\lambda ) \, M(\alpha +2;\alpha +\beta +3;-\lambda )}{(\alpha +\beta+1)\, M(\alpha +1;\alpha +\beta +2;-\lambda )^2},
\end{equation}
Substitute $n=0$ into (2.26); from the fact that $r_0=0,$ and $R_0$ given (2.28), we obtain,
\begin{align}\label{2r1}
r_1(\lambda)=&\frac{(\alpha +\beta +1)^2 \, M(\alpha ;\alpha +\beta +1;-\lambda )^2}{\lambda\, M(\alpha +1;\alpha +\beta +2;-\lambda )^2}-\frac{(\alpha +\beta +1)^2 \, M(\alpha ;\alpha +\beta +1;-\lambda )}{\lambda\, M(\alpha +1;\alpha +\beta +2;-\lambda )}+\alpha \notag\\
&-\frac{(\alpha +\beta +1) \, M(\alpha ;\alpha +\beta +1;-\lambda )}{\, M(\alpha +1;\alpha +\beta +2;-\lambda )},
\end{align}
a rather large expression. However, \eqref{1r1}$-$\eqref{2r1} gives $0$.
\begin{rem}
A direct computation shows that $R_0(\lambda),$ satisfies (2.24) evaluated at $n=0$.
Also a direct computation shows that $r_1(\lambda)$ given by (2.29) satisfies (2.25) evaluated at $n=1.$
\end{rem}
\begin{rem}
In particular, if we take $\alpha=\beta=\frac{1}{2}$, then $R_n(\lambda)$ and $r_n(\lambda)$ can be represented by a Bessel function of the first kind with imaginary argument, e.g.
\begin{align*}
r_0(\lambda)&=0,~~~~~R_0(\lambda)=\frac{1}{4} \lambda  \left[\frac{I_0\left(\frac{\lambda }{2}\right)}{I_1\left(\frac{\lambda }{2}\right)}+1\right],
~~~~~r_1(\lambda)=\frac{1}{4} \left[\frac{\lambda  I_0\left(\frac{\lambda }{2}\right) I_2\left(\frac{\lambda }{2}\right)}{I_1\left(\frac{\lambda }{2}\right)^{2}}-\lambda -2\right],\\
R_1(\lambda)&=\frac{\lambda  \left[(\lambda +4) I_1\left(\frac{\lambda }{2}\right)-\lambda  I_0\left(\frac{\lambda }{2}\right)\right] \left[-\lambda  I_0\left(\frac{\lambda }{2}\right){}^2+4 I_1\left(\frac{\lambda }{2}\right) I_0\left(\frac{\lambda }{2}\right)+(\lambda +2) I_1\left(\frac{\lambda }{2}\right){}^2\right]}{2 I_1\left(\frac{\lambda }{2}\right) \left[-\lambda ^2 I_0\left(\frac{\lambda }{2}\right){}^2+\left(\lambda ^2+8\right) I_1\left(\frac{\lambda }{2}\right){}^2+2 \lambda  I_1\left(\frac{\lambda }{2}\right) I_0\left(\frac{\lambda }{2}\right)\right]}.\\
\end{align*}
This result is consistent with the corresponding case in \cite{BasorChenEhrhardt2010}. One can verify the differential equation of Theorem \ref{th2.6} for $R_0(\lambda )$  by hand calculation or Mathematica.
\end{rem}
\begin{rem}
Disregarding the integral representation of $R_n(\lambda)$ and $r_n(\lambda)$, and putting $\alpha=-k$ $ (k=0, 1, 2,\cdots)$ and $\beta=a-\alpha$ $( a\in\mathbb{R})$, we see that $R_0(\lambda)$ and $r_0(\lambda)$ are given by Laguerre polynomials,
\begin{align} \label{Pv001}
R_{0}(\lambda)=\frac{(a+1)M(-k;a+1;-\lambda)}{M(-(k-1);a+2;-\lambda)}=\frac{kL_{k}^{(a)}(-\lambda)}{L_{k-1}^{(a+1)}(-\lambda)},
\end{align}
thus
\begin{align} \label{Pv0001}
r_{1}(\lambda)=\frac{R_{0}^{2}(\lambda)}{\lambda}-\left(1-\frac{a+1}{\lambda}\right)R_{0}(\lambda)-k.
\end{align}
Thus we generate rational solutions in terms of the Laguerre polynomials. On page 21 of Appendix A, Masuda, Ohta and Kajiwara \cite{Masuda} produced such rational solutions of Painlev\'e V.
\end{rem}
\section{Painlev\'{e} \uppercase\expandafter{\romannumeral 5}, Chazy Equation and discrete $\sigma$-form }
\subsection{Painlev\'{e} \uppercase\expandafter{\romannumeral 5}}
The auxiliary quantities $R_n(\lambda)$ and $r_n(\lambda)$ maybe recast into familiar form.
We make a change of variables
$$ R_n(\lambda):=-\frac{\lambda}{Y_n(-\lambda)-1}\Longleftrightarrow\;\;Y_n(-\lambda)=1-\frac{\lambda}{R_n(\lambda)}.$$
\begin{theorem}
The quantity $Y_n(\lambda)$ satisfies the Painlev\'{e} \uppercase\expandafter{\romannumeral 5} equation
$$P_{{\rm \uppercase\expandafter{\romannumeral 5}}}\bigg(\frac{\alpha^2}{2},-\frac{\beta^2}{2},2n+1+\alpha+\beta,\frac{1}{2}\bigg),$$
namely
\begin{align} \label{Pv1}
Y_n^{\prime\prime}=\frac{3Y_n-1}{2Y_n(Y_n-1)}{Y_n^{\prime}}^2-\frac{Y_n^\prime}{\lambda}&+\frac{(Y_n-1)^2}{\lambda^2}\left(\frac{\alpha^2}{2}Y_n-\frac{\beta^2/2}{Y_n}\right)
\notag\\&+(2n+1+\alpha+\beta)\frac{Y_n}{\lambda}-\frac{1}{2}\frac{Y_n(Y_n+1)}{Y_n-1},
\end{align}
 with initial conditions
\[Y_n(0)=1~~~{\rm and}~~~ Y_n'(0)=\frac{1}{2n+1+\alpha+\beta}.\]
\end{theorem}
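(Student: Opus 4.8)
The plan is to obtain \eqref{Pv1} as a pure change of variables applied to the second-order equation \eqref{D2R} for $R_n(\lambda)$ proved in Theorem \ref{th2.5}. That theorem has already done the heavy lifting of eliminating $r_n$ from the coupled Riccati system and producing a single closed equation for $R_n$, so nothing new about the orthogonal-polynomial structure is needed here: it remains only to substitute $R_n(\lambda)=-\lambda/(Y_n(-\lambda)-1)$ into \eqref{D2R} and recognize the canonical Painlev\'e V form, and then to read off the initial data from \eqref{C1}.

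First I would record the algebraic identities forced by the substitution. Writing $y=y(\lambda):=Y_n(-\lambda)=1-\lambda/R_n(\lambda)$, one has $R_n=-\lambda/(y-1)$, and the two combinations controlling the denominator of \eqref{D2R} collapse cleanly:
\[
R_n-\lambda=-\frac{\lambda y}{y-1},\qquad (R_n-\lambda)R_n=\frac{\lambda^2 y}{(y-1)^2},
\]
so that $2\lambda^2(R_n-\lambda)R_n=2\lambda^4 y/(y-1)^2$. Differentiating $R_n=-\lambda/(y-1)$ gives $R_n'=[\lambda y'-(y-1)]/(y-1)^2$ (here $'=d/d\lambda$), and a further differentiation yields $R_n''$ as a rational expression in $y,y',y''$. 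These are mechanical but must be computed exactly, since every power of $(y-1)$ matters when the denominators are cleared.

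Next I would insert $R_n,R_n',R_n''$ into \eqref{D2R}, multiply through by the common denominator, and collect. Each term then becomes rational in $y$ with coefficients polynomial in $\lambda$: the $(2R_n-\lambda)(\lambda R_n')^2$ term on the right of \eqref{D2R} reproduces the $\frac{3y-1}{2y(y-1)}(y')^2$ coefficient, the $-2\lambda R_n^2R_n'$ term reproduces $-y'/\lambda$, the $\alpha^2$- and $\beta^2$-terms together with the degree-five polynomial in $R_n$ assemble into $\frac{(y-1)^2}{\lambda^2}\big(\tfrac{\alpha^2}{2}y-\tfrac{\beta^2/2}{y}\big)$, the $R_n^3,R_n^4$ terms carrying $2n+1+\alpha+\beta$ produce the linear-in-$\lambda^{-1}$ term, and the remaining pure $R_n$-powers combine into $-\tfrac12\,y(y+1)/(y-1)$. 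Finally, since $y(\lambda)=Y_n(-\lambda)$, I would apply the reflection $\lambda\mapsto-\lambda$ to pass from the equation for $y$ to the stated equation for $Y_n$; this reflection preserves the $\frac{3Y-1}{2Y(Y-1)}(Y')^2$, $-Y'/\lambda$, $(Y-1)^2/\lambda^2(\cdots)$ and $\delta$-terms but flips the sign of the middle coefficient, which is precisely why $2n+1+\alpha+\beta$ appears with its stated sign in \eqref{Pv1}. \textbf{I expect the main obstacle to be exactly this coefficient-matching bookkeeping:} reconciling the $(Y')^2$ coefficient and the $Y$-independent remainder requires cancelling several competing contributions, and a factor of $\lambda$ or a sign is easy to lose through the reflection. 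The step is most safely executed and verified with computer algebra, but it is guaranteed to close, since $R_n\mapsto Y_n$ is the standard M\"obius map relating the residue form to the canonical Painlev\'e V form.

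It remains to check the initial conditions. Evaluating \eqref{C1} at $\lambda=0$ gives $R_n(0)=2n+1+\alpha+\beta$, and the Kummer-function formula for $R_0$ in Theorem \ref{th2.6} together with the recurrence shows $R_n(\lambda)$ is analytic and nonvanishing near $\lambda=0$, so $Y_n$ is regular there. Since $Y_n(z)=1+z/R_n(-z)$ (writing $z=-\lambda$), setting $z=0$ gives $Y_n(0)=1$, and one differentiation gives $Y_n'(0)=1/R_n(0)=1/(2n+1+\alpha+\beta)$, as claimed.
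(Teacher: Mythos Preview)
Your proposal is correct and is essentially the approach implicit in the paper: the paper's own proof consists solely of a reference to Basor, Chen and Ehrhardt \cite{BasorChenEhrhardt2010}, where exactly this substitution of $R_n(\lambda)=-\lambda/(Y_n(-\lambda)-1)$ into the second-order equation for $R_n$ is carried out. Your derivation of the initial data from \eqref{C1} is also the intended one; so your write-up is in fact more explicit than what the paper itself provides.
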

\begin{proof}
See also Basor, Chen and Ehrhardt \cite{BasorChenEhrhardt2010}.
\end{proof}
\begin{theorem}
The quantity $\widetilde{\sigma}_{n}$ satisfies the following Jimbo-Miwa-Okamoto \cite{JimboMiwa1981,Okamoto} $\sigma-$form of Painlev\'{e}
\textrm{V}
 \begin{equation}\label{2Pv0}
\begin{split}
\left(\lambda\widetilde{\sigma}_{n}''\right)^2&=\left[\widetilde{\sigma}_{n}-\lambda
\widetilde{\sigma}_{n}'+2\left(\widetilde{\sigma}_{n}'\right)^{2}-\left(2n-\alpha+\beta\right)\widetilde{\sigma}_{n}'\right]^2\\
&-4\widetilde{\sigma}_{n}'\left(\widetilde{\sigma}_{n}'-\alpha\right)\left(\widetilde{\sigma}_{n}'+n\right)\left(\widetilde{\sigma}_{n}'+n+\beta\right),
\end{split}
\end{equation}
with the initial conditions
\[\widetilde{\sigma}_{n}(0)= n(n+\beta) ~~~{\rm and}~~~
\widetilde{\sigma}_{n}'(0)=-\frac{n(n+\beta)}{2n+\alpha+\beta}.\]
Comparing with Jimbo-Miwa $\sigma-$form \cite{JimboMiwa1981}, (C.45), it shows
\begin{equation*}
v_{0}=0,~~~v_{1}=-\alpha,~~~v_{2}=n,~~~v_{3}=n+\beta.
\end{equation*}
\end{theorem}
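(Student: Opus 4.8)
The plan is to realise $\widetilde{\sigma}_n$ as (essentially) the logarithmic derivative of the Hankel determinant and to collapse the $\sigma$-form into a single "sum-and-difference" identity for the residues $R_n,r_n$. First I would pin down $\widetilde{\sigma}_n$ explicitly. Summing \eqref{C1} over $j=0,\dots,n-1$ and using $\sum_{j=0}^{n-1}\alpha_j=-p(n,\lambda)$ from \eqref{1p1} gives the sum rule $\sum_{j=0}^{n-1}R_j(\lambda)=n^2+n(\alpha+\beta+\lambda)+\lambda p(n,\lambda)$, which together with \eqref{D1Dn} identifies the object ``$\lambda p(n,-\lambda)$ plus a translation'' of the introduction with $\sum_{j=0}^{n-1}R_j(-\lambda)$ up to an affine term; concretely I would take $\widetilde{\sigma}_n(\lambda)=\sum_{j=0}^{n-1}R_j(-\lambda)-n\alpha$. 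Both initial conditions then fall out at once: at $\lambda=0$ the weight is classical and \eqref{C1} forces $R_j(0)=2j+1+\alpha+\beta$, whence $\widetilde{\sigma}_n(0)=n^2+n\beta=n(n+\beta)$, while evaluating \eqref{C2} at $\lambda=0$ gives $r_n(0)=-n(n+\beta)/(2n+\alpha+\beta)$, which I will show equals $\widetilde{\sigma}_n'(0)$.

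The second step is the derivative identity $\widetilde{\sigma}_n'(\lambda)=r_n(-\lambda)$. I would get it by telescoping: adding the Riccati equation \eqref{DR} to the first difference equation of Theorem \ref{th2.6} makes the $R_n^2$, the $R_n(2n+1+\alpha+\beta+\lambda)$ and the $\alpha\lambda$ terms cancel, leaving $R_n'=r_n-r_{n+1}$. Summing over $j=0,\dots,n-1$ and using $r_0=0$ yields $\frac{d}{d\lambda}\sum_{j=0}^{n-1}R_j=-r_n$, so differentiating the definition gives $\widetilde{\sigma}_n'(\lambda)=r_n(-\lambda)$ and $\widetilde{\sigma}_n''(\lambda)=-r_n'(-\lambda)$.

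The heart of the argument is then a decomposition carried out at the argument $\mu=-\lambda$. Introduce $a=\beta_nR_n(R_{n-1}-\mu)$ and $b=\beta_nR_{n-1}(R_n-\mu)$. Three facts assemble the $\sigma$-form. (i) From \eqref{1S'2} and \eqref{2S'2}, $ab=(r_n^2-\alpha r_n)(r_n+n)(r_n+n+\beta)=r_n(r_n-\alpha)(r_n+n)(r_n+n+\beta)$, so $4ab$ reproduces exactly the quartic $4\widetilde{\sigma}_n'(\widetilde{\sigma}_n'-\alpha)(\widetilde{\sigma}_n'+n)(\widetilde{\sigma}_n'+n+\beta)$. (ii) Simplifying \eqref{020} by means of \eqref{C2} and \eqref{1S'2} collapses it to $r_n'=\beta_n(R_{n-1}-R_n)$, hence $a-b=\mu\,r_n'(\mu)=\lambda\widetilde{\sigma}_n''$ and $(a-b)^2=(\lambda\widetilde{\sigma}_n'')^2$. (iii) The sum rule \eqref{3S'2} expresses $a+b=\beta_n[R_n(R_{n-1}-\mu)+R_{n-1}(R_n-\mu)]$ in terms of $r_n$ and $\sum_{j=0}^{n-1}R_j$; inserting $\sum_{j=0}^{n-1}R_j(\mu)=\widetilde{\sigma}_n+n\alpha$ and $r_n=\widetilde{\sigma}_n'$ turns it into the bracket $\widetilde{\sigma}_n-\lambda\widetilde{\sigma}_n'+2(\widetilde{\sigma}_n')^2\pm(2n-\alpha+\beta)\widetilde{\sigma}_n'$. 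The elementary identity $(a-b)^2=(a+b)^2-4ab$ is then read off as the stated $\sigma$-form.

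The main obstacle is the bookkeeping of the reflection $\lambda\mapsto-\lambda$ together with the overall sign of $\widetilde{\sigma}_n$: because the definition involves $p(n,-\lambda)$, every auxiliary relation must be transported to $\mu=-\lambda$, and the normalisation must be fixed so that both stated initial conditions hold. In particular the sign of the linear term $(2n-\alpha+\beta)\widetilde{\sigma}_n'$ inside the bracket is precisely the one dictated by the Jimbo--Miwa--Okamoto parameters $(v_0,v_1,v_2,v_3)=(0,-\alpha,n,n+\beta)$ with $\sum_k v_k=2n-\alpha+\beta$, and tracking it through step (iii) is where care is needed; a convenient consistency check is that at $\lambda=0$ one has $a=b$, so both sides of the equation vanish simultaneously. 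Once the decomposition of step (iii) is performed, no genuinely new computation remains beyond the algebra already packaged in Theorems \ref{th2.5}--\ref{th2.6}.
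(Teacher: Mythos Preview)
Your argument is correct and, in fact, more explicit than the paper's own proof, which simply introduces $\sigma_n(\lambda)=n\lambda+\lambda\,p(n,-\lambda)-n(n+\beta)$, cites \cite{BasorChenEhrhardt2010} for the equation \eqref{2Pv}, and then sets $\widetilde{\sigma}_n=-\sigma_n$. Your route is the same one that reference follows---ladder-operator identities $(S_1)$, $(S_2')$ repackaged via the $(a-b)^2=(a+b)^2-4ab$ trick---so there is no substantive difference in method, only in the level of detail displayed. Your identification $\widetilde{\sigma}_n(\lambda)=\sum_{j<n}R_j(-\lambda)-n\alpha$ agrees with the paper's definition (sum \eqref{C1} and compare with \eqref{sgm}), and your derivations of $R_n'=r_n-r_{n+1}$ and $r_n'=\beta_n(R_{n-1}-R_n)$ are exactly right.

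On the sign you flagged: carrying out step~(iii) gives
\[
a+b=\widetilde{\sigma}_n-\lambda\widetilde{\sigma}_n'+2(\widetilde{\sigma}_n')^2+(2n-\alpha+\beta)\widetilde{\sigma}_n',
\]
i.e.\ a \emph{plus} sign in front of $(2n-\alpha+\beta)\widetilde{\sigma}_n'$, not the minus printed in \eqref{2Pv0}. This is consistent with the Jimbo--Miwa convention $\prod_k(\sigma'+v_k)$ (so that the coefficient in the bracket is $+\sum v_k$), and one can check directly that the printed version fails at $\lambda=0$ whereas your version gives $0=0$ there. Substituting $\widetilde{\sigma}_n=-\sigma_n$ into \eqref{2Pv} and completing the square independently confirms the $+$ sign. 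So your caution was warranted: the approach is sound, and the discrepancy is a typographical slip in the stated equation rather than an error in your derivation.
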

\begin{proof}
For this problem, introduce,
\begin{equation}\label{sgm}
\sigma_{n}(\lambda):=n\lambda+\lambda p(n,-\lambda)- n(n+\beta).
\end{equation}
It can be shown, following \cite{BasorChenEhrhardt2010}, that,
\begin{equation}\label{2Pv}
\left(\lambda\sigma_{n}''\right)^2=\left[\sigma_{n}-\lambda
\sigma_{n}'+(2n+\alpha+\beta)\sigma_{n}'\right]^2+4\left[(\sigma_{n}')^2+\alpha \sigma_{n}'\right]\left[ \lambda
\sigma_{n}'-\sigma_{n}-n(n+\beta)\right],
\end{equation}
Let $\widetilde{\sigma}_{n}(\lambda):=-\sigma_{n}(\lambda)$ 
 we arrive at (3.2), the $\sigma-$form of Painlev\'{e} \textrm{V}.
\end{proof}
\subsection{Chazy Equation }
We will obtain an ODE satisfied by $r_n(\lambda)$ from the $\sigma$-form of Painlev\'{e} \textrm{V}. Following  \cite{LvChen}, let
\begin{align*}
 \Xi(\lambda):=&\lambda\frac{d}{d \lambda}\log D_n(-\lambda,\alpha,\beta)-n\lambda+n(n+\beta), \\
 &\Xi'(\lambda)=r_n(-\lambda).
\end{align*}
\begin{proposition} The
$r_n(\lambda)$ satisfies the following Chazy II system,
\begin{equation}\label{Chazy}
  \left(\frac{d^2\vartheta}{dz^2}-2\vartheta^3-\alpha_1\vartheta-\beta_1\right)^2=-4(\vartheta-{\rm e}^z)^2\left[\left(\frac{d\vartheta}{dz}\right)^2-\vartheta^4-\alpha_1\vartheta^2-2\beta_1\vartheta-\gamma_1\right],
\end{equation}
where
\begin{align*}
  \vartheta(z) &=2{\rm i} r_n(2{\rm i}\/{\rm e}^z)-\frac{{\rm i}\/}{2}(2n-\alpha+\beta), \\
  \alpha_1 &=\frac{1}{2}(4n^2+4n\alpha+3\alpha^2+4n\beta+2\alpha\beta+3\beta^3),\\
   \beta_1 &=-\frac{{\rm i}\/}{2}(2n+\alpha+\beta)(\alpha+\beta)(\alpha-\beta),\\
    \gamma_1 &=\frac{1}{16}(2n+\alpha-\beta)(2n-\alpha+\beta)(2n+3\alpha+2\beta)(2n+\alpha+3\beta).
\end{align*}
\end{proposition}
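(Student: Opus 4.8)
The plan is to route the proof through the Jimbo--Miwa--Okamoto $\sigma$-form of Painlev\'e V already established for $\widetilde\sigma_n$, rather than attacking \eqref{Chazy} head-on. First I would record the identification that makes $\Xi$ useful: since $\frac{d}{d\lambda}\log D_n(\lambda,\alpha,\beta)=p(n,\lambda)$ by \eqref{D1Dn}, the chain rule gives $\frac{d}{d\lambda}\log D_n(-\lambda,\alpha,\beta)=-p(n,-\lambda)$, whence $\Xi(\lambda)=-\lambda\,p(n,-\lambda)-n\lambda+n(n+\beta)=-\sigma_n(\lambda)=\widetilde\sigma_n(\lambda)$ by \eqref{sgm}. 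Therefore $\Xi$ satisfies the $\sigma$-form \eqref{2Pv0}, and $\Xi'(\lambda)=r_n(-\lambda)$ as stated. The structural point is that in \eqref{2Pv0} the undifferentiated $\widetilde\sigma_n$ enters only through the algebraic square bracket, degree one in $\widetilde\sigma_n$: writing $u:=\Xi'=r_n(-\lambda)$, the bracket is $A:=\Xi-\lambda u+2u^2-(2n-\alpha+\beta)u$.

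Second, I would eliminate $\Xi$ to obtain a closed second-order equation for $u$. Differentiating \eqref{2Pv0} in $\lambda$ and using $A'=\big(4u-\lambda-(2n-\alpha+\beta)\big)u'$, a common factor $u'$ cancels (dividing by $2u'$, legitimate where $u'\not\equiv 0$ and extended by continuity otherwise), which solves linearly for $A$ in terms of $u,u',u''$. Substituting back into $A^2=(\lambda u')^2+4P(u)$ with $P(u)=u(u-\alpha)(u+n)(u+n+\beta)$ yields $\big(\lambda^2 u''+\lambda u'+2P'(u)\big)^2=\big(4u-\lambda-(2n-\alpha+\beta)\big)^2\big[(\lambda u')^2+4P(u)\big]$. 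This is the second-order ODE obeyed by $r_n(-\lambda)$, the companion of \eqref{D2r}, against which it can be cross-checked.

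Third comes the change of variables. Setting $\lambda=-2{\rm i}\,{\rm e}^z$ one has $\frac{d\lambda}{dz}=\lambda$, so $\lambda\frac{d}{d\lambda}=\frac{d}{dz}$ and the distinguished combination $\lambda^2 u''+\lambda u'$ collapses to $\frac{d^2}{dz^2}$ acting on $g(z):=u(-2{\rm i}\,{\rm e}^z)=r_n(2{\rm i}\,{\rm e}^z)$. Introducing $\vartheta=2{\rm i}\,g-\tfrac{{\rm i}}{2}(2n-\alpha+\beta)$, equivalently $4u-\lambda-(2n-\alpha+\beta)=-2{\rm i}(\vartheta-{\rm e}^z)$, so the prefactor squares to the $-4(\vartheta-{\rm e}^z)^2$ of Chazy II, while the rescaling by $2{\rm i}$ together with the affine shift is tailored to recast $2P'(g)$ into the normal form $2\vartheta^3+\alpha_1\vartheta+\beta_1$. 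Expanding both sides and matching equal powers of $\vartheta$ then reads off $\alpha_1,\beta_1,\gamma_1$; the comparison of the quartic-in-$g$ data with $\vartheta^4+\alpha_1\vartheta^2+2\beta_1\vartheta+\gamma_1$ fixes $\gamma_1$.

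I expect the third step to be the main obstacle. The bookkeeping of the factors of $2{\rm i}$ and of the sign of $\lambda=-2{\rm i}\,{\rm e}^z$ must be tracked exactly, since these are precisely the choices responsible both for the emergence of the $(\vartheta-{\rm e}^z)^2$ factor and for the normalisation of the polynomial part; a single sign slip corrupts the coefficients $\alpha_1,\beta_1,\gamma_1$ or spoils the Chazy shape. The differentiation of Step~2 is routine but relies on the standing assumption $w(0)=w(1)=0$ used earlier for the ladder operators, and the division by $u'$ needs the separate remark that $u'\equiv 0$ forces a degenerate branch handled directly.
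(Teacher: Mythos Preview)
Your proposal is correct and follows precisely the route the paper sketches: the paper merely records $\Xi(\lambda)=\widetilde\sigma_n(\lambda)$ and $\Xi'(\lambda)=r_n(-\lambda)$, citing \cite{LvChen}, while you supply the missing mechanics---differentiating the $\sigma$-form \eqref{2Pv0} to eliminate $\Xi$, then applying the Euler-type substitution $\lambda=-2{\rm i}\,{\rm e}^z$ and the affine shift to $\vartheta$. Your intermediate second-order ODE for $u=r_n(-\lambda)$ is the $\lambda\mapsto -\lambda$ companion of \eqref{D2r}, exactly as you note, and the bookkeeping you flag (signs of $2{\rm i}$ and of $\lambda$) is indeed where the coefficients $\alpha_1,\beta_1,\gamma_1$ are read off.
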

\subsection{The Discrete $\sigma-$form}
\begin{theorem} The quantities $\sigma_{n+1}$, $\sigma_n$ and $\sigma_{n-1}$ satisfy
\begin{align*}
 &(\sigma_n-\sigma_{n-1}-\alpha)[(2 n + \alpha + \beta)( \sigma_n+n^2 + n\beta)-n \lambda(n + \beta)] ( \sigma_n - \sigma_{n+1}+\alpha )\cdot \\
 &(2 n - \alpha + \beta - \lambda - \sigma_{n-1} + \sigma_{n+1}) + [2 n \alpha (n + \beta) + (2 \alpha + \
\lambda)\sigma_{n}+ (n^2 + n\beta + \sigma_{n})\\
& \cdot(\sigma_{n-1}- \sigma_{n+1})] [2 n \alpha (n + \beta) + (2 \alpha + \
\lambda) \sigma_{n} + (n^{2} + n\beta + \sigma_{n}) (\sigma_{n-1} - \sigma_{n+1}) \\&- \alpha \lambda (2 n - \alpha + \beta - \lambda - \sigma_{n-1} +\sigma_{n+1})] = 0,
\end{align*}
which we call the discrete $\sigma-$form.
\end{theorem}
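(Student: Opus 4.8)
The plan is to pass everything to the ladder residues $R_n$ and $r_n$ evaluated at the reflected argument $-\lambda$, and then to eliminate $r_n$. First I would set up a dictionary between $\sigma_n$ and these residues. Combining \eqref{C1} with the identity $\alpha_n=p(n,\lambda)-p(n+1,\lambda)$ from \eqref{ReC1} and the definition \eqref{sgm}, a short computation gives the bridge
\begin{equation*}
R_n(-\lambda)=\alpha+\sigma_n-\sigma_{n+1},\qquad R_{n-1}(-\lambda)=\alpha+\sigma_{n-1}-\sigma_n,
\end{equation*}
so each residue $R$ becomes a first difference of the $\sigma$'s. Summing the first identity over $j=0,\dots,n-1$ telescopes, and since $\sigma_0=0$ (because $p(0,\lambda)=0$) one gets $\sum_{j=0}^{n-1}R_j(-\lambda)=n\alpha-\sigma_n$. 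This is the relation that will inject the \emph{value} of $\sigma_n$, not merely its differences, into the final identity.

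Next I would manufacture two derivative-free algebraic relations among $r_n,R_n,R_{n-1}$ by eliminating $\beta_n$ from the supplementary conditions. Using \eqref{1S'2} in the form $r_n^2-\alpha r_n=\beta_nR_nR_{n-1}$ to substitute for $\beta_n$ in \eqref{2S'2} reproduces the second difference equation of Theorem \ref{th2.6}; call this relation $(A)$. Performing the same substitution in \eqref{3S'2} and inserting the telescoped sum $\sum_{j=0}^{n-1}R_j$ yields a second relation $(B)$ that now carries $\sigma_n$ explicitly. Evaluating $(A)$ and $(B)$ at $-\lambda$ and applying the dictionary turns them into two equations for the single unknown $\rho:=r_n(-\lambda)$, whose coefficients are polynomials in $\sigma_{n-1},\sigma_n,\sigma_{n+1}$ and $\lambda$.

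The decisive step is the elimination of $\rho$. Although $(A)$ and $(B)$ are each quadratic in $\rho$, their quadratic parts are both proportional to $\rho^2-\alpha\rho$, so $(A)$ can be used to delete that term from $(B)$; the outcome is \emph{linear} in $\rho$, and after multiplying through by $U:=\lambda+R_n(-\lambda)+R_{n-1}(-\lambda)$ and regrouping it collapses to
\begin{equation*}
\lambda\,T\,\rho=S,
\end{equation*}
where $T=2n-\alpha+\beta-\lambda-\sigma_{n-1}+\sigma_{n+1}=(2n+\alpha+\beta)-U$ and $S=2n\alpha(n+\beta)+(2\alpha+\lambda)\sigma_n+(n^2+n\beta+\sigma_n)(\sigma_{n-1}-\sigma_{n+1})$ are exactly the two $\sigma$-polynomials of the statement. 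I would then substitute $\rho=S/(\lambda T)$ back into $(A)$ and simplify using the two identities $S=U\bigl(\sigma_n+n(n+\beta)\bigr)-\lambda n(n+\beta)$ and $(2n+\alpha+\beta)S+\lambda n(n+\beta)T=U\,B$, with $B=(2n+\alpha+\beta)(\sigma_n+n^2+n\beta)-n\lambda(n+\beta)$. These make the factor $U$ cancel and leave precisely $S^2-\alpha\lambda\,T\,S-R_n(-\lambda)R_{n-1}(-\lambda)\,B\,T=0$. Rewriting $R_n(-\lambda)R_{n-1}(-\lambda)=-(\sigma_n-\sigma_{n+1}+\alpha)(\sigma_n-\sigma_{n-1}-\alpha)$ then reproduces the asserted discrete $\sigma$-form.

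The hard part is the linearization: a priori the resultant of two quadratics in $\rho$ is quartic, and it is only the proportionality of the leading parts of $(A)$ and $(B)$ that lets $\rho$ be solved linearly and thereby produces the compact product structure in the statement rather than an unwieldy quartic. Verifying that proportionality, and then checking that the back-substituted expression factors through $U$ so that $U$ drops out, are the two points that need genuine care; the rest is bookkeeping, chiefly propagating the reflection $\lambda\mapsto-\lambda$ consistently through every supplementary condition and confirming $\sigma_0=0$ so that the telescoping is exact.
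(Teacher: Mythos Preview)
Your approach is correct and is essentially the paper's argument, reorganized. The paper sets up the same dictionary $R_n=\alpha+\sigma_n(-\lambda)-\sigma_{n+1}(-\lambda)$, then from \eqref{1S'2}, \eqref{2S'2}, \eqref{3S'2} extracts two relations that are \emph{linear} in the pair $(\beta_n,r_n)$, namely $\lambda^2\beta_n+\lambda r_n=\sigma_n(-\lambda)+n(n+\beta)$ and $-\lambda\beta_n\bigl(R_n+R_{n-1}-\lambda\bigr)-(2n+\alpha+\beta)r_n=n(n+\beta)$; it solves this $2\times2$ linear system for $\beta_n,r_n$ and substitutes the result into the quadratic \eqref{1S'2}. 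You instead eliminate $\beta_n$ first via \eqref{1S'2} to get your $(A)$ and $(B)$, observe that the quadratic parts are both proportional to $\rho^2-\alpha\rho$, and then linearize in $\rho$ before substituting back. The two routes are the same elimination carried out in a different order; the paper's is marginally shorter because the linearity in $(\beta_n,r_n)$ is visible at once, whereas yours needs the proportionality observation you flagged as ``the hard part''.
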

\begin{proof}
From \eqref{3S'2}, \eqref{C1} and \eqref{sgm}, we have
\begin{equation}\label{C2r1p1}
\lambda^2\beta_n+\lambda r_n=\sigma_n(-\lambda)+n(n+\beta).
\end{equation}
Together with \eqref{ReC1}, \eqref{C1} and \eqref{sgm}, we obtain,
\begin{equation}\label{Rp1}
R_n=\alpha+\sigma_n(-\lambda)-\sigma_{n+1}(-\lambda).
\end{equation}
Then sum of \eqref{Rp1} at `$n$'  and the same at `$n-1$', leaves
\begin{equation}\label{Rnn-1}
R_n+R_{n-1}=2\alpha+\sigma_{n-1}(-\lambda)-\sigma_{n+1}(-\lambda).
\end{equation}
From \eqref{1S'2}, \eqref{2S'2} and \eqref{Rnn-1} , we get
\begin{equation}\label{C2r2p1}
-\lambda\beta_n[2\alpha+\sigma_{n-1}(-\lambda)-\sigma_{n+1}(-\lambda)-\lambda]-(2n+\alpha+\beta)r_n=n(n+\beta).
\end{equation}

Eliminating $\beta_n$ and $r_n$ in \eqref{1S'2} from \eqref{C2r1p1} and \eqref{C2r2p1}, simultaneously, changing variable $\lambda$ to $-\lambda$, then the discrete $\sigma-$form will be obtained immediately.
\end{proof}
\begin{theorem}
 Our orthogonal polynomials $P_n(z)$ satisfy a linear
second-order ode, with rational coefficients in $z$, and the residues at the poles are in terms of
$Y_n(-\lambda),$ $\sigma_n(-\lambda)$ and $d\sigma_n(-\lambda)/d\lambda$.
\begin{equation}\label{kun}
P_n''(z)+R(z)P_n'(z)+Q(z)P_n(z)=0,
\end{equation}
where
\begin{align*}
R(z):=&\frac{\alpha+1}{z}+\frac{\beta+1}{z-1}-\lambda-\frac{1}{z+1/[Y_n(-\lambda)-1]},  \\ \notag \\
Q(z):=&\frac{n(\alpha+1)-\sigma_{n}(-\lambda)}{z}+\frac{n(\lambda-\alpha-1)+\sigma_{n}(-\lambda)}{z-1}\notag \\
&+\frac{1}{z+1/[Y_n(-\lambda)-1]}\left[\frac{1}{z-1}\left(n+\frac{d\sigma_{n}(-\lambda)}{d\lambda}\right)-
\frac{1}{z}\frac{d\sigma_{n}(-\lambda)}{d\lambda}\right].
\end{align*}
\end{theorem}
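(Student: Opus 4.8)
The plan is to eliminate $P_{n-1}(z)$ between the two ladder relations \eqref{lr1}--\eqref{lr2} to produce a closed second-order equation for $P_n(z)$ alone, and then to translate the resulting rational coefficients into the Painlev\'e/$\sigma$-function variables. First I would solve the lowering relation \eqref{lr1} for $P_{n-1}=(P_n'+B_nP_n)/(\beta_nA_n)$, differentiate in $z$, and substitute $P_{n-1}'$ from the raising relation. After clearing $A_n$ and invoking the sum rule $(S_2')$ to replace $\beta_nA_nA_{n-1}-B_n^2-v'B_n$ by $\sum_{j=0}^{n-1}A_j$, this yields the generic form
\begin{equation*}
P_n''-\left(\frac{A_n'}{A_n}+v'\right)P_n'+\left(B_n'-\frac{A_n'}{A_n}B_n+\sum_{j=0}^{n-1}A_j\right)P_n=0,
\end{equation*}
so that $R(z)=-(A_n'/A_n+v')$ and $Q(z)=B_n'-(A_n'/A_n)B_n+\sum_{j=0}^{n-1}A_j$. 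This reduces the theorem to a direct evaluation of these two rational functions.

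For $R(z)$ I would combine the partial fractions in \eqref{An} into the single fraction $A_n(z)=(\lambda z-R_n)/[z(z-1)]$, whose logarithmic derivative is $A_n'/A_n=\lambda/(\lambda z-R_n)-1/z-1/(z-1)$. Reading $v'(z)=-\alpha/z-\beta/(z-1)+\lambda$ off from $w={\rm e}^{-v}$, the poles at $0$ and $1$ assemble into $(\alpha+1)/z+(\beta+1)/(z-1)$, the constant contributes $-\lambda$, and the extra simple pole sits at the zero $z=R_n/\lambda$ of $A_n$. The change of variable $R_n=-\lambda/(Y_n(-\lambda)-1)$ gives $R_n/\lambda=-1/(Y_n(-\lambda)-1)$, whence $\lambda/(\lambda z-R_n)=1/(z+1/[Y_n(-\lambda)-1])$, reproducing the final term of $R(z)$.

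The computation of $Q(z)$ is where the work concentrates. Substituting \eqref{Bn}, I expect the double poles $r_n/z^2$ and $(n+r_n)/(z-1)^2$ arising from $B_n'$ to cancel exactly against those in $-(A_n'/A_n)B_n$, leaving $B_n'-(A_n'/A_n)B_n=n/z-n/(z-1)-\frac{\lambda}{\lambda z-R_n}\bigl(r_n/z-(n+r_n)/(z-1)\bigr)$. The remaining task is arithmetic bookkeeping of the simple-pole residues, and two identities do the real work here. The first is $\sum_{j=0}^{n-1}R_j=n\alpha-\sigma_n(-\lambda)$, obtained from \eqref{C1} and \eqref{1p1} via $R_j=2j+1+\alpha+\beta+\lambda-\lambda\alpha_j$ together with the definition \eqref{sgm} of $\sigma_n$; this converts $\sum_{j=0}^{n-1}A_j$ into residues expressed through $\sigma_n(-\lambda)$ and collapses the $1/z$ and $1/(z-1)$ parts into $[n(\alpha+1)-\sigma_n(-\lambda)]/z$ and $[n(\lambda-\alpha-1)+\sigma_n(-\lambda)]/(z-1)$. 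The second is $r_n(\lambda)=d\sigma_n(-\lambda)/d\lambda$, which follows from the Chazy-section relations $\Xi'(\lambda)=r_n(-\lambda)$ and $\Xi(\lambda)=-\sigma_n(\lambda)$ (the latter from \eqref{D1Dn} and \eqref{sgm}) by the chain rule; using $\lambda/(\lambda z-R_n)=1/(z+1/[Y_n(-\lambda)-1])$ this rewrites the third-pole term in terms of $d\sigma_n(-\lambda)/d\lambda$, yielding precisely the stated $Q(z)$.

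The main obstacle I anticipate is not the elimination step, which is routine, but the two translation identities in the last paragraph: matching the raw ladder-operator residues $\sum_{j=0}^{n-1}R_j$ and $r_n$ to the Painlev\'e variables $\sigma_n(-\lambda)$ and its derivative requires threading together the recurrence relation \eqref{C1}, the sum \eqref{1p1}, the definition \eqref{sgm}, and the determinant-derivative formula \eqref{D1Dn}, all with the correct signs under $\lambda\mapsto-\lambda$. Keeping these sign conventions consistent is the delicate part; once they are in place the partial-fraction matching of residues is mechanical.
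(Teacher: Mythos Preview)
Your proposal is correct and follows exactly the route the paper takes: eliminate $P_{n-1}$ from the ladder relations to obtain the generic ODE $y''-(v'+A_n'/A_n)y'+(B_n'-B_n A_n'/A_n+\sum_{j<n}A_j)y=0$, then substitute the explicit $A_n,B_n$ from \eqref{An}--\eqref{Bn} and rewrite the residues via the Painlev\'e variables. The paper's own proof merely records this generic ODE and then says ``keeping in mind the relationship of $R_n$ and $Y_n$, with $r_n$ and $\sigma_n$, the equation is found via some simple computations''; your write-up spells out precisely those relationships --- namely $\sum_{j=0}^{n-1}R_j=n\alpha-\sigma_n(-\lambda)$ from \eqref{C1}, \eqref{1p1}, \eqref{sgm}, and $r_n(\lambda)=d\sigma_n(-\lambda)/d\lambda$ from the Chazy-section identities --- and carries out the partial-fraction bookkeeping that the paper suppresses.
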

\begin{proof}
Eliminating $P_{n-1}(z)$ from \eqref{lr1} and  \eqref{lr2}, we obtain a second-order linear ordinary differential
equation for $P_n(z)$. If $y(z):=P_n(z)$, then $y(z)$ satisfies the differential equation
\begin{equation}
y''(z)-\left( v'(z)+\frac{A'_n(z)}{A_n(z)}\right) y'(z)+\left(
B'_n(z)-B_n(z)\frac{A'_n(z)}{A_n(z)}+\sum_{j=0}^{n-1}A_j(z)\right)y(z)=0.
\end{equation}
Substituting \eqref{An} and \eqref{Bn} into the above equation, keeping in mind the relationship of $R_n$ amd $Y_n$, with $r_n$ and $\sigma_n$, the equation (\ref{kun}) is found via some simple computations.
\end{proof}
\begin{rem}
We can also rewrite $\sigma_n(\lambda)$ in terms of $Y_{n}(\lambda)$, and this reads,
\begin{align}\label{PV-PV}
\sigma_n(\lambda) =&\frac{1}{4Y_n(4Y_n-1)^2}\big\{\beta^2-(\lambda Y'_n)^2+\alpha^2Y_n^4+[\alpha^2+(\beta-\lambda)^2-4\alpha(\beta+2n)+2\lambda(\alpha+6n)]Y_n^2\notag \\
  &+2[\alpha(2n-\alpha+\beta-\lambda)-4n\lambda]Y_n^3+2[2n(\alpha-\lambda)+\beta(\alpha-\beta+\lambda)]Y_n\big\}.
\end{align}
\end{rem}
\section{$\mathcal{M}_{f}(\lambda,\alpha,\beta,n)$ for large $n$  and finite $n$,
 Linear Statistics and the $\sigma$-form}

In this section, our objective is to approximate  the moment generating function
$\mathcal{M}_{f}(\lambda ,\alpha ,\beta ,n)$ of the linear statistic $c=x_1+\dots +x_n$, for large $n.$

\subsection{Log-concavity of the density of the center of mass}
\begin{proposition} Suppose that $\alpha, \beta >0$, and suppose that $\left\{x_j\right\}_{j=1}^{n}$ are random subject to the Jacobi unitary ensemble for the weight $w(x, 0 )$. Then the center of mass $c$ has a log-concave probability density function $\mathbb{P}(c, \alpha, \beta ,n)$.\end{proposition}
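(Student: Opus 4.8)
The plan is to realize $\mathbb{P}(c,\alpha,\beta,n)$ as the pushforward of a log-concave probability measure on $\mathbb{R}^n$ under the linear functional $L(\vec x)=\sum_{j=1}^n x_j$, and then to invoke the Pr\'ekopa--Leindler machinery, which guarantees that linear marginals of log-concave densities are again log-concave. Writing $w(x)=x^{\alpha}(1-x)^{\beta}$, the joint eigenvalue density is
$$p(\vec x)=\frac{1}{n!\,D_n[w]}\prod_{1\le j<k\le n}(x_k-x_j)^2\prod_{l=1}^n w(x_l)\,\mathbf{1}_{[0,1]^n}(\vec x).$$
This $p$ is \emph{not} log-concave on the full cube, since $\prod_{j<k}(x_k-x_j)^2$ vanishes on the diagonals and $\log|x_k-x_j|$ fails to be concave across $x_k=x_j$; recognizing this is the crucial point that forces a reduction to a single Weyl chamber.

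Because both $p$ and the constraint $\sum_j x_j=c$ are invariant under permutations of the $x_j$, I would pass to the ordered simplex $W=\{\vec x:0<x_1<x_2<\cdots<x_n<1\}$, which is convex, and set $g:=n!\,p\,\mathbf{1}_W$. Using $\int_{[0,1]^n}p=1$ together with the symmetry (the $n!$ orderings partition the cube up to measure zero, each contributing equally), one checks that $\int_{\mathbb{R}^n}g=1$ and, more importantly, that $\int_{\mathbb{R}^n}g\,\delta(c-L)=n!\int_W p\,\delta(c-L)=\int_{[0,1]^n}p\,\delta(c-L)=\mathbb{P}(c,\alpha,\beta,n)$; thus the pushforward of $g$ under $L$ is exactly the density we want.

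The heart of the argument is to verify that $g$ is log-concave on $\mathbb{R}^n$, in the extended sense that allows the value $0$ (i.e.\ $\log g$ concave, with $-\infty$ permitted). On $W$ one has $x_k-x_j>0$ for $k>j$, so $\log g=\mathrm{const}+2\sum_{j<k}\log(x_k-x_j)+\sum_l[\alpha\log x_l+\beta\log(1-x_l)]$. Each term $\log(x_k-x_j)$ is concave, being the logarithm composed with a positive affine function, and $\alpha\log x+\beta\log(1-x)$ has second derivative $-\alpha/x^2-\beta/(1-x)^2<0$ precisely because $\alpha,\beta>0$ — this is exactly where the hypothesis is used. Combined with the log-concavity of $\mathbf{1}_W$ (the indicator of a convex set), this exhibits $g$ as a product of log-concave functions, hence log-concave.

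Finally I would apply Pr\'ekopa's theorem on marginals of log-concave functions. Choosing any invertible linear change of variables $T$ whose first coordinate is $L(\vec x)=\sum_j x_j$, the composition $g\circ T^{-1}$ remains log-concave by affine invariance, and $\mathbb{P}(c,\alpha,\beta,n)$ is obtained by integrating out the remaining $n-1$ coordinates; Pr\'ekopa's theorem then yields log-concavity of $\mathbb{P}(c,\alpha,\beta,n)$. The main obstacle is conceptual rather than computational: one must notice that the Vandermonde weight is log-concave only after restriction to an ordered chamber, and then confirm that this restriction leaves the pushforward onto the symmetric statistic $c$ unchanged. The mild integrability conditions needed to apply Pr\'ekopa are immediate from $\alpha,\beta>0$ and the boundedness of $[0,1]$.
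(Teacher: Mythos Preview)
Your argument is correct, but it follows a genuinely different route from the paper's. The paper lifts the problem to the $n^2$-dimensional space $M_n^h(\mathbf{C})$ of Hermitian matrices: with $v(x)=-\alpha\log x-\beta\log(1-x)$ it shows, via a Rayleigh--Ritz Hessian computation, that $V(X)=\mathrm{trace}\,v(X)$ is convex on $M_n^h(\mathbf{C})$, so that $\mu_n(dX)\propto e^{-V(X)}dX$ is log-concave; after a unitary change of coordinates making the first coordinate $\mathrm{trace}(X)/\sqrt{n}$, Pr\'ekopa's theorem yields log-concavity of the marginal in that coordinate. The Vandermonde factor never has to be handled directly---it appears only as the Jacobian when one descends from matrices to eigenvalues. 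By contrast, you stay in $\mathbf{R}^n$, restrict to a single Weyl chamber (a convex set on which each $x_k-x_j$ is a positive affine function), and verify log-concavity of the joint eigenvalue density there, Vandermonde included; the symmetry of $p$ and of $c=\sum_j x_j$ under permutations justifies that the pushforward is unchanged by this restriction. Your approach is more elementary, works in the lower-dimensional space, and immediately generalizes to $\beta$-ensembles (any positive power of $|\Delta_n|$), whereas the paper's matrix-level argument is tied to the Jacobian structure of the unitary case but has the conceptual advantage that the convexity check reduces to that of the scalar potential $v$ alone.
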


\begin{proof} We can view $c$ as the center of mass or as the trace of a Hermitian matrix.
Let $M_{n}^{h}(\mathbf{C})$ be the space of $n\times n$ complex Hermitian matrices, which we regard as a complex inner product space with the inner product $\langle X,Y\rangle={\rm trace}\left(XY^{*}\right)$. Let $v: (0,1)\rightarrow \mathbf{R}$ be convex and twice continuously differentiable, and suppose $v(x)=\infty$ for $x<0$ and $x>1$. Now  let $V(x)={\rm trace}~v(X)$ for $X\in M_{n}^{h}(\mathbf{C})$; then there exists $Z_{n}>0$ such that
\begin{equation}\label{7.3}
\mu_n(dX)=Z_{n}^{-1}{\rm e}^{-V(X)}dX
\end{equation}
defines a probability measure on $M_{n}^{h}(\mathbf{C})$ where $dX$ is Lebesgue measure on the entries that are on or above the leading diagonal. The crucial point is that the function $V:M_n^h(\mathbf{C})\rightarrow \mathbf{R}$ is convex, as we now show; compare \cite{B1}. Let $\left\{\xi_j\right\}_{j=1}^n$ be an orthonormal basis of $\mathbf{C}^n$ given by eigenvectors of $X\in M_n^h$ that correspond to eigenvalues $\left\{x_j\right\}_{j=1}^{n}$; for $X$ in a set of full Lebesgue measure, we can assume that all the $x_j\in \mathbf {R}$ are distinct. Then by the Rayleigh--Ritz formula
\begin{align} \label{4.2}\bigl\langle{\rm {Hess}}\, V, Y\otimes Y\bigr\rangle&=\sum_{j=1}^n v''(x_j)  \bigl\langle Y\xi_j, \xi_j\bigr\rangle^2_{\mathbf{C}^n}\nonumber\\ &\quad+\sum_{j,k=1;j\neq k}^{n} {\frac{v'(x_j)-v'(x_k)}{x_j-x_k}}\bigl\langle Y\xi_j  , \xi_k\bigr\rangle_{\mathbf{C}^n}\end{align}
 \noindent which is nonnegative by convexity of $v$.
The matrix $X$ has a system of coordinates given by the real and imaginary parts of entries $X=\sum_{j=1}^n {\rm e}_{jj} u_{jj}+\sum_{j<k} {\rm e}_{jk} (u_{jk}+{\rm i}v_{jk})$, where ${\rm e}_{jk}$ are the standard matrix units.
\noindent We introduce a new orthonormal basis $(Y_1, \dots ,Y_{n^2})$ for $M_n^h(\mathbf{C})$ where $Y_1=\sum_{j=1}^n {\rm e}_{jj}/\sqrt {n}$, so that the new variables are $y_k=\langle X, Y_k\rangle$ for $k=1, \dots, n^2$; in particular $y_1={\rm trace}(X)/\sqrt {n}$.  Thus we change variables to $Y=U(X)$ where $U: M_n^h (\mathbf{C})\rightarrow M_n^h(\mathbf{C})$ is a unitary linear transformation. The function $W(Y)=V(U^{-1}(Y))$ is also convex, so by Pr\'ekopa's theorem from page 106 of \cite{Boyd}, the marginal density
\begin{equation}g_1(y_1)=\int_{{\bf R}^{n^2-1}} Z^{-1} e^{-W(y_1, \dots, y_{n^2})}dy_2\dots dy_{n^2}\end{equation}
\noindent is a probability density function such that $-\log g_1(y_1)$ is convex.
 \vskip2mm
In particular, we can take $v(x)=-\alpha \log x-\beta \log (1-x)$, since $v''(x)\geq (\alpha^{1/3}+\beta^{1/3})^3$ for $0<x<1$, and
$$y_1={\rm trace}(X)/\sqrt {n}=\sum_{j=1}^nx_{j}/\sqrt{n}=c/\sqrt{n}.$$
The Vandermonde $\Delta_n (\vec{x})^2$ arises as a Jacobian factor when one passes down from $X\in M_n^h(\mathbf{C}) $ to $x\in \mathbf{R}^n$, so by rescaling we can write  the probability density function of $c$ as $\mathbb{P}(c, \alpha, \beta ,n)={\rm e}^{-v_n(c)}$, where
$v_n:(0, \infty )\rightarrow (-\infty, \infty )$ is convex.
\end{proof}

\noindent From this result, we have $\mathcal{M}_f(\lambda , \alpha , \beta ,n) =\int_0^\infty {\rm e}^{-\lambda c-v_n(c)}dc.$
By the Cauchy--Schwartz inequality,
$${{d^2}\over{d\lambda^2}}\log {\cal M}_f(\lambda ,\alpha, \beta,n )\geq 0\qquad (\lambda >0).$$
\noindent so $\log {\cal M}_f(\lambda ,\alpha, \beta ,n )$ is convex. Let
$$v_n^*(\lambda )=\sup\{ \lambda c-v_n(c): c>0\}$$ be the Legendre transform of $v_{n}$, which is also convex. From the definition, we have an optimal inequality ${\rm e}^{-\lambda c-v_n(c)}\leq {\rm e}^{v_n^*(-\lambda )}$. According to Laplace's approximation method for integrals, $v_n^*(-\lambda )$ provides a first approximation to $\log \mathcal{M}_f(\lambda , \alpha , \beta ,n)$. In the next subsection, we refine this idea by using Dyson's method for Coulomb fluids.
  \\

\vskip2mm
\subsection{Dyson's Coulomb Fluid}
\noindent In this subsection, we show that the moment generating function of linear statistics can be computed via the Dyson's Coulomb Fluid approach, as can be found
\cite{ChenLawrence1998}. We first present some background to the Linear Statistics formula, originating from the Coulomb fluid. Consider the quotient of the Hankel determinants
\begin{equation}\label{rZn}
\frac{D_n(\lambda,\alpha,\beta)}{D_n(0,\alpha,\beta)}:=\mathrm{e}^{-(F_n(\lambda)-F_n(0))},
\end{equation}
where
\begin{equation}\label{Zn0}
D_n(\lambda,\alpha,\beta):=\int_{[0,1]^n}\exp\left[-E(x_1,\dots,x_n)-\lambda\sum_{j=1}^nf(x_j)\right]\;\;dx_1\dots dx_n.
\end{equation}
Interpreting $\{x_{k}:k=1,2,\ldots,n\}$ as the positions of $n$ identically charged particles on the real line, we see that
\begin{equation*}\label{E}
E(x_1,\dots,x_n):=-2\sum_{1\leq j<k\leq n }\log\mid x_j-x_k\mid+n\sum_{j=1}^n v_{0}(x_j),
\end{equation*}
with
\begin{equation*}
  v_0(x)=-\frac{\alpha}{n}\log x-\frac{\beta}{n}\log(1-x),
\end{equation*}
is the total energy of the $n$ repelling, classical charged particles which are confined by a common external potential
$nv_{0}(x)$. The linear statistic associated with $f(x)$, acts as a perturbation to the original system, which modifies the external potential.

For large enough $n$, the collection particles can be approximated as a continuous
fluid with a certain density $\sigma (x)$ supported on a single interval $(a,b)\subseteq [0,1]$, see \cite{Dyson}. This density
corresponds to the equilibrium density of the fluid, obtained by the constrained minimization of the free-energy function, $F[\sigma]$, {\rm i.e.}
\[\min_{\sigma} F[\sigma] \quad \text{subject to}\quad \int_a^b\sigma(x)dx=1,\quad \sigma (x)\geq 0\]
with
\[F[\sigma]:=\int_a^b\sigma(x)(n^2v_0(x)+\lambda nf(x))dx-n^2\int_a^b\int_a^b\sigma(x)\log|x-y|\sigma(y)\,dxdy.\]

Upon minimization \cite{MTsuji}, the equilibrium density $\sigma(x)$ satisfies the integral equation
\begin{equation}\label{08}
v_0(x)+\frac{\lambda}{n}f(x)-2\int_a^b\log|x-y|\sigma(y)dy=A,\quad x\in[a,b],
\end{equation}
where $A$ is the Lagrange multiplier which imposes the constraint that the equilibrium density has total charge of unity, {\rm i.e.}
$\int_a^b\sigma(x)dx=1$.

We note that $A$ and $\sigma$ depend upon $\lambda$ and $n$, but not upon $x.$  The (\ref{08}) is converted into a singular
integral equation by taking a derivative with respect to $x$,
\[2{\rm PV}\int_a^b\frac{\sigma(y)}{x-y}dy=v'_0(x)+\frac{\lambda}{n}f'(x),\quad x\in[a,b],\]
where PV denotes the Cauchy principal value. The boundary condition on $\sigma(x)$ is that it vanishes at $x=a$ and $x=b$. Supposing $v_0(x)$ is convex, we can find the solution to this problem; see \cite{ChenLawrence1998}. Taking the
optimal
$\sigma(x;\lambda,n)$ in the form of
\begin{equation}\label{1sgm0}
\sigma(x)=\sigma(x;\lambda,n)=\sigma_0(x)+\frac{\tilde{\varrho}(x)}{n},
\end{equation}
where
\begin{equation}\label{1sgm0}
\sigma_0(x)=\frac{\sqrt{(b-x)(x-a)}}{2\pi^2}{\rm PV}\int_a^b\frac{v'_0(x)-v'_0(y)}{(x-y)\sqrt{(b-y)(y-a)}}dy,
\end{equation}
denotes the density $\sigma(x)$ of the original system that is with respect to the weight $w_{0}(x)=x^{\alpha}(1-x)^{\beta}$,
and
\begin{equation}\label{vr1}
\tilde{\varrho}(x)=\tilde{\varrho}(x;\lambda)=\lambda\varrho(x)=\frac{\lambda}{2\pi^2\sqrt{(b-x)(x-a)}}{\rm PV}\int_a^b\frac{\sqrt{(b-y)(y-a)}}{y-x}f'(y)dy,
\end{equation}
represents the deformation of the density due to the ``perturbation", $\lambda f(x)/n.$
\begin{theorem}
For sufficiently large $n$, the moment generation function $\mathcal{M}_f(\lambda,\alpha,\beta,n)$ has the following asymptotic expression,
\begin{align}\label{Ml}&
\mathcal{M}_f(\lambda,\alpha,\beta,n)=\frac{D_n(\lambda,\alpha,\beta)}{D_n(0,\alpha,\beta)}\cr
  &~\sim~\exp \left[\frac{a^2+2ab-b^2}{16}\lambda^2-\left(n+\frac{\alpha+\beta}{2}\right)
  \left(\sqrt{(a-1)(b-1)}-\frac{a+b}{2}+1\right)\lambda\right]
\end{align}
where $a$ and $b$ are defined in (\ref{aA}).
\end{theorem}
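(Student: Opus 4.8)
The plan is to evaluate the equilibrium free energy of the Coulomb fluid and exponentiate. By \eqref{rZn} we have $\mathcal{M}_f(\lambda,\alpha,\beta,n)=\exp\!\big[-(F_n(\lambda)-F_n(0))\big]$, and the minimiser of $F[\sigma]$ is the single‑band density $\sigma(x)=\sigma_0(x)+\lambda\varrho(x)/n$ of \eqref{vr1}, supported on $[a,b]$. The whole computation therefore reduces to evaluating a few explicit integrals of $\sigma$ against $1$, $x$ and $\log|x-y|$: first one fixes the band $[a,b]$, then one assembles the $O(\lambda)$ and $O(\lambda^2)$ contributions to $F_n(\lambda)-F_n(0)$.

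To locate the endpoints I would work from the singular integral equation obtained by differentiating \eqref{08}, whose driving field is $g(x)=v_0'(x)+(\lambda/n)f'(x)$ with $f'\equiv 1$. Requiring that $\sigma$ vanish at both ends and carry unit mass supplies exactly two conditions,
\[
\int_a^b\frac{g(y)}{\sqrt{(b-y)(y-a)}}\,dy=0,\qquad \frac{1}{2\pi}\int_a^b\frac{y\,g(y)}{\sqrt{(b-y)(y-a)}}\,dy=1 .
\]
Using the elementary principal‑value evaluations $\int_a^b\frac{dy}{y\sqrt{(b-y)(y-a)}}=\pi/\sqrt{ab}$, $\int_a^b\frac{dy}{(1-y)\sqrt{(b-y)(y-a)}}=\pi/\sqrt{(1-a)(1-b)}$, and $\int_a^b\frac{y\,dy}{\sqrt{(b-y)(y-a)}}=\pi(a+b)/2$, these collapse to two algebraic relations among $a,b$ and $(\lambda,n,\alpha,\beta)$, which are \eqref{aA}; they give $a\to 0$, $b\to 1$ as $n\to\infty$ and, crucially, the identity $\big(n+\tfrac{\alpha+\beta}{2}\big)\sqrt{(1-a)(1-b)}=\beta/2$ at $\lambda=0$, which is what will convert a transcendental edge quantity into an algebraic coefficient.

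The $O(\lambda)$ coefficient is the mean. By the envelope (Feynman–Hellmann) identity applied to the minimised free energy, $F_n'(\lambda)=n\int_a^b x\,\sigma(x)\,dx$; the first endpoint condition kills the square‑root singular part of this integral, leaving a rational function of $a,b$ which, after inserting \eqref{aA}, reduces to $\big(n+\tfrac{\alpha+\beta}{2}\big)\big(1-\tfrac{a+b}{2}\big)$ plus an $O(1)$ remainder that the identity above rewrites through $\sqrt{(a-1)(b-1)}$; this reproduces the displayed factor $-(n+\frac{\alpha+\beta}{2})\big(\sqrt{(a-1)(b-1)}-\frac{a+b}{2}+1\big)$. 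For the $O(\lambda^2)$ coefficient I would use the second variation: the deformation $\varrho$ satisfies its own singular equation $2\int_a^b\log|x-y|\varrho(y)\,dy=-f(x)+\mathrm{const}$ on $[a,b]$, so its logarithmic self‑energy cancels half of the source term and the quadratic part of $F_n(\lambda)-F_n(0)$ is driven by $\tfrac12\int_a^b x\,\varrho(x)\,dx$, evaluated from $\varrho(x)=\big(\tfrac{a+b}{2}-x\big)/\big(2\pi\sqrt{(b-x)(x-a)}\big)$ together with the moments $\int_a^b x^k\,dx/\sqrt{(b-x)(x-a)}$ for $k=1,2$.

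The \emph{main obstacle} is the $\lambda$‑dependence of the band edges. With the band held fixed one only gets the naive second‑order value $\tfrac12\int_a^b x\varrho\,dx=-\tfrac{(b-a)^2}{32}$, which is not the full answer; because the support moves with $\lambda$, the correct coefficient $\frac{a^2+2ab-b^2}{16}$ emerges only after one either integrates $F_n'(\lambda)=n\int_a^b x\,\sigma$ in $\lambda$ while carrying the trajectories $a(\lambda),b(\lambda)$ through the relations \eqref{aA}, or evaluates $F_n(\lambda)$ in closed form directly and then differences off the $D_n(0)$ normalisation. In either route the delicate bookkeeping is the systematic elimination of the transcendental edge data $\sqrt{ab}$ and $\sqrt{(1-a)(1-b)}$ in favour of $a,b$ via \eqref{aA}; that cancellation, rather than any single integral, is where all the care is required.
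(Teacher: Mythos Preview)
The paper's proof is considerably more direct than your proposal and, crucially, does \emph{not} treat the band endpoints as $\lambda$-dependent. It simply invokes the Chen--Lawrence linear-statistics formulas: for large $n$,
\[
\mathcal{M}_f(\lambda,\alpha,\beta,n)\sim\exp\Bigl[-\tfrac{\lambda^2}{2}J_1-\lambda J_2\Bigr],\qquad
J_1=\int_a^b f(x)\varrho(x)\,dx,\quad J_2=n\int_a^b f(x)\sigma_0(x)\,dx,
\]
with the \emph{fixed} support $[a,b]$ of the unperturbed Jacobi equilibrium measure $\sigma_0$. The paper then obtains $\sigma_0$ by translating the known Jacobi equilibrium density on $[-1,1]$ to $[0,1]$ (this is how the expression $\sqrt{(a-1)(b-1)}$ enters $J_2$), writes down $\varrho(x)=\big(\tfrac{a+b}{2}-x\big)/\big(2\pi\sqrt{(b-x)(x-a)}\big)$, and evaluates the two one-dimensional integrals. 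That is the whole argument.

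Your ``main obstacle''---the $\lambda$-dependence of $a,b$---is therefore absent from the paper's proof and not needed at this order. The perturbation enters the potential as $\lambda f(x)/n$, so the endpoint shift is $O(\lambda/n)$ and does not affect the leading $J_1,J_2$ formulas; this is precisely the regime in which the Chen--Lawrence expressions are derived. Your Feynman--Hellmann/second-variation machinery and the proposed integration along $a(\lambda),b(\lambda)$ are unnecessary here. Note also that the endpoint relations you derive cannot ``be \eqref{aA}'': equation~\eqref{aA} is purely the $\lambda=0$ Jacobi band, with no $\lambda$ in it.

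The discrepancy that led you down this path---your fixed-band value $\int_a^b x\,\varrho(x)\,dx=-(b-a)^2/16$ versus the paper's displayed $J_1=-(a^2+2ab-b^2)/8$---is not a signal that band motion must be incorporated. Your evaluation is the correct output of the method; the displayed coefficient appears to be a misprint. Indeed, the paper's own convexity check at the end of the proof (``$J_1\le 0$, so $a^2+2ab-b^2\ge 0$'') already fails for $\alpha=\beta=0$, where $a=0$, $b=1$ give $a^2+2ab-b^2=-1$, while $-(b-a)^2/16=-1/16$ is consistent with the large-$n$ limit of the cumulant $b_2(n)/2\to 1/32$ from Proposition~4.3. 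So keep the simple fixed-band computation; that is exactly what the paper does.
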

\begin{proof}
From above results, for sufficiently large $n$, the ratio \eqref{rZn} will be the approximated by

\begin{equation}
\frac{D_n(\lambda,\alpha,\beta)}{D_n(0,\alpha,\beta)} ~\sim~ \exp\left[{-\frac{\lambda^2 }{2}J_1-\lambda J_2}\right],
\end{equation}
where
\begin{equation}\label{S1S2}
J_1=\int_a^bf(x)\varrho(x)dx,\qquad J_2=n\int_a^bf(x)\sigma_0(x)dx.
\end{equation}
In our problem,
\begin{equation}\label{010}
  f(x) =x ,
\end{equation}
\begin{equation}\label{011}
  v_0(x) =-\frac{\alpha}{n}\log x-\frac{\beta}{n}\log(1-x)=-\frac{\log w(x)}{n}.
\end{equation}
with
\begin{equation}\label{012}
w(x)=x^{\alpha}(1-x)^{\beta},~x\in[0,1],~\alpha>-1,\beta>-1.
\end{equation}

We first consider the limiting density $\sigma_0$. In \cite{BasorChen2009}, where the limiting density denoted by $\rho(x)$ respected to the classical Jacobi weight supported on $[-1,1]$
\begin{equation}
w_{Jac}(x)=(1-x)^\alpha(1+x)^\beta,\quad x\in[-1,1],
\end{equation}
is given by
\begin{equation}\label{4.14}
\rho(y)=\frac{1}{\pi}\frac{n+(\alpha+\beta)/2}{1-y^2}\sqrt{(B_n-y)(y-A_n)}\quad y\in(A_n,B_n),
\end{equation}
with
\begin{align}
  A_n&:=\frac{1}{(2n+\alpha+\beta)^2}\left[\beta^2-\alpha^2-4\sqrt{n(n+\alpha)(n+\beta)(n+\alpha+\beta)}\right], \notag\\
  B_n&:=\frac{1}{(2n+\alpha+\beta)^2}\left[\beta^2-\alpha^2+4\sqrt{n(n+\alpha)(n+\beta)(n+\alpha+\beta)}\right].
\end{align}
To investigate the large $n$ behavior, make the replacement
\begin{equation}
\alpha\rightarrow n\alpha ~~~{\rm and}~~~\beta\rightarrow n\beta.
\end{equation}
The limit $n\rightarrow\infty$ gives $A_n\rightarrow A $ and $B_n\rightarrow B$ where
\begin{align}
  A&:=\frac{1}{(2+\alpha+\beta)^2}\left[\beta^2-\alpha^2-4\sqrt{(1+\alpha )(1+\beta )(1+\alpha +\beta)}\right],
  \notag\\
  B&:=\frac{1}{(2+\alpha+\beta)^2}\left[\beta^2-\alpha^2+4\sqrt{(1+\alpha)(1+\beta)(1+\alpha+\beta)}\right].
\end{align}
We now translate $(-1,1)$ to $(0,1)$ so
\begin{equation}\label{013}
w(x)=\frac{{w}_{Jac}(1-2x)}{2^{\alpha+\beta}},
\end{equation}
hence,
\begin{equation}\label{014}
v'_0(x)=-\frac{w'(x)}{n\cdot w(x)}=-\frac{w'_{Jac}(1-2x)}{n\cdot w_{Jac}(1-2x)}.
\end{equation}
Substituting (\ref{014}) into \eqref{1sgm0} gives the desired result
\begin{equation}\label{2sgm0}
\sigma_0(x)=\frac{2}{n}\rho(1-2x)=\left[1+\frac{\alpha+\beta}{2}\right]\frac{\sqrt{(b-x)(x-a)}}{\pi x(1-x)},
\end{equation}
where $0<a<x<b<1$, with
\begin{equation}\label{aA}
a:=\frac{1-B}{2}~~~{\rm and}~~~ b:=\frac{1-A}{2}.
\end{equation}

For $S_{2}$, substituting $f(x)=x$ into \eqref{vr1}, we have
\begin{align}
\varrho(x)
&=\frac{1}{2\pi^2\sqrt{(b-x)(x-a)}}{\rm PV}\int_a^b\frac{\sqrt{(b-y)(y-a)}}{y-x}dy\notag \\
&=\frac{(a+b)/2-x}{2\pi\sqrt{(b-x)(x-a)}}.  \label{vr2}
\end{align}

Substituting \eqref{2sgm0} and \eqref{vr2} into \eqref{S1S2} we obtain, for large $n$,
\begin{equation*}
J_1=-\frac{1}{8}\left(a^2+2ab-b^2\right),~~ J_2=\frac{2n+\alpha+\beta}{2}\left[\sqrt{(a-1)(b-1)}-\frac{a+b}{2}+1\right]\label{015},
\end{equation*}
with $a$ and $b$ given by (4.21).  Since $\log  {\cal M}_f(\lambda ,\alpha, \beta,n )$  is convex on $(0, \infty )$, one finds $J_1\leq 0$, so $a^2+2ab-b^2\geq 0$.
\end{proof}

\subsection{Cumulants of the distribution of the center of mass}

As a function of $\lambda$, our ${\cal M}_f(\lambda , \alpha, \beta ,n )$ is analytic on a neighbourhood of $\lambda =0$ with ${\cal M}_f(0 , \alpha, \beta ,n )=1;$ hence there is a convergent power series expansion
$$\log {\cal M}_f(\lambda , \alpha, \beta ,n )=\sum_{m=1}^\infty {{\kappa_m(n)}\over{m!}}(-\lambda )^m,$$
\noindent where the $\kappa_m(n)$ are the cumulants of $\mathbb{P}(c,\alpha,\beta,n)$. Combining (2.6) and (3.3), we have
$$\lambda {{d}\over{d\lambda }}\log{\cal M}_f(\lambda , \alpha, \beta ,n )=\lambda {{d}\over{d\lambda }}\log D_n(\lambda , \alpha ,\beta )=-\sigma_n(-\lambda )-n\lambda -n(n+\beta ),$$
\noindent so the Taylor coefficients of $\sigma_n(-\lambda )$ determine these cumulants. \par
\indent We can also write
$${\cal M}_f(\lambda , \alpha, \beta ,n )=\exp\left[ -{{\lambda^2}\over{2}}J_1-\lambda J_2+\int_0^\lambda {{C_n(-s)}\over{s}}ds\right],$$
\noindent where $C_n(-s)$ captures the error in the approximation (4.8) and the higher order cumulants. In the following results, we compute the power series expansion of $\sigma_n(\lambda )$, starting with the simplest case $\alpha=\beta=0.$
\begin{proposition}Suppose $\alpha =\beta=0$. Then $\sigma_n(\lambda)$ has a convergent power series in $\lambda$,
\begin{equation}\label{sgmb}
  \sigma_n(\lambda)=-n^2+n\lambda-\sum_{m=1}^\infty b_m(n)(-\lambda)^m,
\end{equation}
where the coefficients are
\begin{align}\label{bn}
  b_1(n) &=-\frac{n}{2},~~b_2(n) =\frac{n^2}{4(4 n^2-1)},~~b_4(n) =\frac{ n^2}{16 (4 n^2-9) (4 n^2-1)^2},\notag\\
b_6(n)&=\frac{n^2( 2n^2 + 1)}{ 32 (4 n^2-1)^3 ( 16 n^4- 136 n^2+225  )}, \notag\\
  b_8(n) &=\frac{n^2(64 n^6- 32 n^4- 392 n^2-45  )}{256(4 n^2-1  )^4 (4 n^2-9 )^2  (16 n^4 - 296 n^2+1225  )}.
\end{align}
\end{proposition}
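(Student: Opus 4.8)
The plan is to extract the Taylor coefficients of $\sigma_n(\lambda)$ directly from the $\sigma$-form \eqref{2Pv}, which at $\alpha=\beta=0$ collapses to
\begin{equation*}
\left(\lambda\sigma_{n}''\right)^2=\bigl[\sigma_{n}+(2n-\lambda)\sigma_{n}'\bigr]^2+4(\sigma_{n}')^2\bigl[\lambda\sigma_{n}'-\sigma_{n}-n^2\bigr].
\end{equation*}
Before matching a series I would first record a reflection symmetry that governs the parity structure. At $\alpha=\beta=0$ the weight is $\re^{-\lambda x}$ on $[0,1]$, and the substitution $x_j\mapsto 1-x_j$ in the Andreief integral \eqref{09} gives $D_n(\lambda,0,0)=\re^{-n\lambda}D_n(-\lambda,0,0)$; differentiating and using \eqref{D1Dn} yields $p(n,\lambda)+p(n,-\lambda)=-n$, whence \eqref{sgm} gives the identity $\sigma_n(\lambda)-\sigma_n(-\lambda)=n\lambda$. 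Writing $\sigma_n(\lambda)=-n^2+\sum_{m\ge1}s_m\lambda^m$, this forces $s_1=n/2$ and $s_m=0$ for all odd $m\ge3$; since $s_m=(-1)^{m+1}b_m$ for $m\ge2$, the odd coefficients $b_3,b_5,b_7,\dots$ all vanish, which is exactly why only $b_1$ and the even $b_m$ appear.

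Convergence is immediate: $D_n(\lambda,0,0)$ is entire and $D_n(0,0,0)>0$, so $\log D_n$, and hence $p(n,\lambda)=\tfrac{d}{d\lambda}\log D_n$ and $\sigma_n$, are analytic at the origin. To fix the surviving coefficients I would substitute $\sigma_n(\lambda)=\sum_{k\ge0}c_k\lambda^k$ into the displayed ODE and equate powers of $\lambda$. The constant is the initial value $c_0=\sigma_n(0)=-n^2$; the order-$\lambda^0$ relation then reduces to $(-n^2+2nc_1)^2=0$, giving $c_1=n/2$, and order $\lambda^1$ is the empty identity $0=0$.

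The structural fact I would prove next is that, for $N\ge3$, the coefficient $c_N$ enters the order-$\lambda^N$ relation \emph{linearly}, with leading factor
\begin{equation*}
4N(N-1)c_2-16n^2Nc_2-n^2(N-1)=\frac{n^2\bigl[4n^2-(N-1)^2\bigr]}{4n^2-1}
\end{equation*}
once $c_2$ is inserted. For even $N$ the numerator factor $4n^2-(N-1)^2$ is the difference of an even and an odd integer, hence nonzero, so every even coefficient is uniquely and recursively determined by the lower ones; at the odd orders (in particular the degenerate case $N=2n+1$, where the factor vanishes) the value $c_N=0$ is supplied by the reflection symmetry rather than by the ODE. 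The only genuinely quadratic step is order $\lambda^2$, which reads $4c_2^2=16n^2c_2^2+n^2c_2$ with roots $c_2=0$ and $c_2=-n^2/[4(4n^2-1)]$. The correct branch is selected independently: from $\sigma_n=-\lambda p(n,\lambda)-n^2$ (the reflected form of \eqref{sgm}) one has $c_2=\tfrac12\sigma_n''(0)=-p'(n,0)=-\beta_n(0)$ by \eqref{D1p1}, and $\beta_n(0)=n^2/[4(4n^2-1)]$ is the recurrence coefficient of the monic shifted Legendre polynomials on $[0,1]$. This gives $c_2=-b_2(n)$ and selects the nontrivial root.

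The remaining coefficients follow by running this linear recursion at $N=4,6,8$ (and verifying $c_3=c_5=c_7=0$, as the symmetry predicts). I expect the main obstacle to be conceptual rather than computational at the low orders: the $\sigma$-form alone does not determine $\sigma_n$ uniquely — it admits the spurious branch at order $\lambda^2$ and degenerates whenever $N=2n+1$ — so the reflection identity and the classical value $\beta_n(0)$ are essential inputs, not conveniences. Beyond that the difficulty is only bookkeeping: the Cauchy products of $\sigma_n,\sigma_n',\sigma_n''$ produce increasingly heavy rational functions of $n$, so the explicit $b_6(n)$ and $b_8(n)$ are most safely obtained with computer algebra. As a consistency check, the denominators must factor through $4n^2-(N-1)^2$; indeed $16n^4-136n^2+225=(4n^2-9)(4n^2-25)$ and $16n^4-296n^2+1225=(4n^2-25)(4n^2-49)$, exactly the products of recursion factors accumulated up to $N=6$ and $N=8$.
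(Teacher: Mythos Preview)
Your argument is correct and, at its core, does the same thing the paper does: insert a power series into the $\sigma$-form at $\alpha=\beta=0$ and read off the coefficients order by order. The route, however, is genuinely different in detail. The paper first passes to the auxiliary function $C_n(\lambda)=J_1\lambda^2-J_2\lambda+n\lambda-n^2-\sigma_n(\lambda)$ (built from the Coulomb-fluid constants $J_1,J_2$), derives a second-order ODE for $C_n$, argues that $C_n(-\lambda)$ satisfies the same ODE and hence that $C_n$ is even, and then rejects the spurious quadratic root $a_1=1/8$ because it corresponds to the exact solution $C_n=\lambda^2/8$. You instead work directly with $\sigma_n$, obtain the parity structure from the reflection $x\mapsto 1-x$ on the Hankel integral (giving $D_n(\lambda,0,0)=\re^{-n\lambda}D_n(-\lambda,0,0)$ and hence $\sigma_n(\lambda)-\sigma_n(-\lambda)=n\lambda$), and pin down the quadratic branch by the independent identification $c_2=-\beta_n(0)$ with the monic shifted-Legendre value. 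Your version is tighter on exactly the two points where the paper is informal: the ODE-symmetry argument for evenness does not by itself force $C_n(\lambda)=C_n(-\lambda)$ without a uniqueness statement at the singular point $\lambda=0$, and the branch selection in the paper is heuristic. You also make explicit the degeneracy of the linear recursion at the odd order $N=2n+1$ (where $4n^2-(N-1)^2=0$), which the paper's even-only ansatz sidesteps but does not discuss. The price you pay is that without the $C_n$ subtraction the Cauchy products are a little heavier; the paper's detour through $C_n$ packages the leading terms and ties the computation to the Dyson-fluid approximation of the preceding subsection.
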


\begin{proof}
We express of $\sigma_n(\lambda)$ in terms $C_n(\lambda)$, that is,
\begin{equation}\label{Sigma2}
\sigma_n(\lambda)=J_1\lambda^2-J_2\lambda+n\lambda-n(n+\beta)-C_n(\lambda).
\end{equation}
Substituting this into (\ref{2Pv}), we obtain a second order ode of $C(\lambda)$. {(The case of $\alpha\neq 0,\beta\neq 0$ will be studied later in the section.)}
Imposing the hypothesis that  $\alpha=\beta=0$, we have $a=0,~b=1$, so

\begin{align}\label{Cllll}
\lambda^2\left[C_n''-2\left(C_n''\right)^2\right]&=8\lambda\left( C_n'\right)^3-8 C_n\left( C_n'\right)^2-\left(8n^{2}+7\lambda^2\right) \left(C_n'\right)^2+\left(6n^2\lambda +\frac{3\lambda^3}{2}\right)C_n'\notag \\
&+8\lambda C_n C_n'-2C_n^2-\left(2n^2+\lambda^2\right) C_n-\frac{3\lambda^4}{32}-\left(\frac{3n^2}{4}-\frac{1}{8}\right)\lambda^2,
\end{align}
with $C_n(0)=0, \; C_n'(0)=0.$

If $\widehat{C}_{n}(\lambda):=C_{n}(-\lambda)$, a little computation show that $\widehat{C}_{n}(\lambda)$ satisfies (\ref{Cllll}). An outcome of this is that $C_{n}(\lambda)$ is even in $\lambda$.
%


Hence with the series expansion in $\lambda^{2},$  we find (for a fixed $n$) 
\begin{equation*}
  C_n(\lambda)=\sum_{j=0}^\infty a_j \lambda^{2j},~\lambda\rightarrow0,
\end{equation*}
The coefficient $a_{1}$ satisfies a quadratic equation; one of the solutions is $a_{1}=1/8$ leading to the solution $\lambda^{2}/8$ for (\ref{Cllll}). We take the other solution for $a_{1}$, giving
\begin{align}\label{Cl3}
 C_n(\lambda)=&\frac{6n^2-1}{8(4n^2-1)}\lambda^2+\frac{n^2}{16(4n^2-9)(4n^2-1)^2}\lambda^4+\frac{n^2(2n^2+1)\lambda^6}{32(4n^2-1)^3(16n^4-136n^2+225)}\notag \\
 &+\frac{ n^2 ( 64 n^6- 32 n^4- 392 n^2 -45  )}{
 256 ( 4 n^2-9)^2 ( 4 n^2-1)^4 (16 n^4- 296 n^2+1225 )}\lambda^8+\mathcal{O}(\lambda^{10}),
\end{align}
Substituting \eqref{Cl3} into \eqref{Sigma2} gives the expansion of $\sigma_n(\lambda)$ in $\lambda$.
\end{proof}

\subsection{Coefficients of $\sigma_n(-\lambda,\alpha,\beta)$ for $\alpha, \beta >-1$, $\alpha\neq0$, $\beta\neq0$}
We relax the special assumptions on $\alpha$ and $\beta$, and list the following coefficients.
\begin{align*}
b_1(n,\alpha,\beta)&=-\frac{n (\alpha +n)}{\alpha +\beta +2 n}\\
b_2(n,\alpha,\beta)&=\frac{n (\alpha +n) (\beta +n) (\alpha +\beta +n)}{(\alpha +\beta +2 n-1) (\alpha +\beta +2 n)^2 (\alpha +\beta +2 n+1)}\\
b_3(n,\alpha,\beta)&=n (\alpha -\beta ) (\alpha +\beta ) (\alpha +n) (\beta +n) (\alpha +\beta +n)/[(\alpha +\beta +2 n-2) \\
&\cdot(\alpha +\beta +2 n-1) (\alpha +\beta +2 n)^3 (\alpha +\beta +2 n+1) (\alpha +\beta +2 n+2)]\\
b_4(n,\alpha,\beta)&=\{(\alpha +\beta -1) (\alpha +\beta )^2 (\alpha +\beta +1) \left(\alpha ^2-3 \alpha  \beta +\beta ^2+1\right)-n^4 \left(8 \alpha ^2+8 \beta ^2-4\right)\\
&-8 n^3 (\alpha +\beta ) \left(2 \alpha ^2+2 \beta ^2-1\right)-2 n^2 [3 \alpha ^4+12 \alpha ^3 \beta +\alpha ^2 \left(18 \beta ^2-7\right)\\
&+6 \alpha  \beta  (2 \beta ^2-1)+3 \beta ^4-7 \beta ^2+2]+2 n (\alpha +\beta ) [\alpha ^4-4 \alpha ^3 \beta +\alpha ^2 \left(5-10 \beta ^2\right)\\
&+\alpha  \left(2 \beta -4 \beta ^3\right)+\beta ^4+5 \beta ^2-2]\}n(\alpha +n) (\beta +n) (\alpha +\beta +n)/[(\alpha +\beta +2 n)^4 \\
&\cdot(\alpha +\beta +2 n-3) (\alpha +\beta +2 n-2)(\alpha +\beta +2 n-1)^2(\alpha +\beta +2 n+1)^2 \\
&\cdot(\alpha +\beta +2 n+2) (\alpha +\beta +2 n+3)]\\
b_5(n,\alpha,\beta)&=\{ (\alpha +\beta -1) (\alpha +\beta )^2 (\alpha +\beta +1)\left(\alpha ^2-5 \alpha  \beta +\beta ^2+5\right) +16 n^6+48 n^5 (\alpha +\beta )\\
&+4 n^4 \left(7 \alpha ^2+30 \alpha  \beta +7 \beta ^2-6\right)-8 n^3 [(3 \alpha -\beta ) (\alpha -3 \beta )+6] (\alpha +\beta )\\
&-2 n^2 [\left(14 \alpha ^2-11\right) \beta ^2+18 \alpha  \left(\alpha ^2+2\right) \beta +11 \alpha ^2 \left(\alpha ^2-1\right)+18 \alpha  \beta ^3+11 \beta ^4+10]\\
&-2 n (\alpha +\beta ) [\alpha ^4+10 \alpha ^3 \beta +\alpha ^2 \left(18 \beta ^2-23\right)+2 \alpha  \beta  \left(5 \beta ^2+6\right)+\beta ^4-23 \beta ^2+10]\}\\
&\cdot n (\alpha -\beta ) (\alpha +\beta ) (\alpha +n) (\beta +n)(\alpha +\beta +n)/[(\alpha +\beta +2 n-4)(\alpha +\beta +2 n)^5 \\
&\cdot(\alpha +\beta +2 n-3) (\alpha +\beta +2 n-2) (\alpha +\beta +2 n-1)^2  (\alpha +\beta +2 n+1)^2 \\
&\cdot(\alpha +\beta +2 n+2) (\alpha +\beta +2 n+3) (\alpha +\beta +2 n+4)]
 \end{align*}

\begin{theorem} Then
$\sigma_n(\lambda,\alpha,\beta)$ has a convergent power series
\begin{equation}\label{sgmab}
  \sigma_n(\lambda,\alpha,\beta)=-n(n+\beta)+n\lambda-\sum_{m=1}^\infty b_m(n,\alpha,\beta)(-\lambda)
  ^m
\end{equation}
where the first few $b_m=b_m(n,\alpha,\beta),\;m=1,2,\dots, 5$ are listed above.
\end{theorem}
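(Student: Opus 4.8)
The plan is to use the Jimbo--Miwa--Okamoto $\sigma$-form \eqref{2Pv} as the governing equation and read off the Taylor coefficients of $\sigma_n$ by power-series matching, having first secured analyticity so that the word ``convergent'' is justified. For the convergence, I would note that each moment $\mu_j(\lambda)=\int_0^1 x^{j+\alpha}(1-x)^\beta{\rm e}^{-\lambda x}\,dx$ is entire in $\lambda$ (one may differentiate under the integral, the integrand being dominated uniformly on compact $\lambda$-sets for $\alpha,\beta>-1$), so $D_n(\lambda,\alpha,\beta)$ is entire and, being the positive-definite moment determinant of the positive weight $w(\cdot;\lambda,\alpha,\beta)$ for real $\lambda$, is strictly positive there. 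Hence $\log D_n$ is analytic near $\lambda=0$, by \eqref{D1Dn} so is $p(n,\lambda)$, and then \eqref{sgm} makes $\sigma_n(\lambda)$ analytic near $\lambda=0$ with a convergent Taylor series. Recasting that series as $-n(n+\beta)+n\lambda-\sum_{m\ge1}b_m(-\lambda)^m$ is just a relabelling, with the constant term fixed by \eqref{sgm} as $\sigma_n(0)=-n(n+\beta)$.

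Next I would pin down the low-order data from \eqref{2Pv} itself. Evaluating at $\lambda=0$, both the left-hand side and the last factor $\lambda\sigma_n'-\sigma_n-n(n+\beta)$ vanish (the latter because $\sigma_n(0)=-n(n+\beta)$), which forces $\sigma_n(0)+(2n+\alpha+\beta)\sigma_n'(0)=0$ and hence $\sigma_n'(0)=n(n+\beta)/(2n+\alpha+\beta)$; equating this to the $\lambda$-coefficient $n+b_1$ of the normalised series reproduces $b_1=-n(\alpha+n)/(\alpha+\beta+2n)$. The structural point underlying the whole computation is that, with these values, both $F:=\sigma_n+(2n+\alpha+\beta-\lambda)\sigma_n'$ and $H:=\lambda\sigma_n'-\sigma_n-n(n+\beta)$ vanish at $\lambda=0$ ($F$ to first order, $H$ to second). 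Writing $\sigma_n=\sum_k c_k\lambda^k$, this guarantees that in $F^2+4GH$, with $G:=(\sigma_n')^2+\alpha\sigma_n'$, the highest unknown $c_M$ enters the coefficient of $\lambda^M$ only linearly, through $2F_1F_{M-1}$ and $4G_0\,(M-1)c_M$ on the right and through $2L_1L_{M-1}$ on the left, where $L_k$ denotes the $\lambda^k$-coefficient of $\lambda\sigma_n''$.

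The recursion then reads off directly: substituting the series into \eqref{2Pv} and equating coefficients of $\lambda^M$ for $M\ge2$ yields a linear equation for $c_M$, whose coefficient is a rational function of $(n,\alpha,\beta)$ that factorises into the shifted linear forms $\alpha+\beta+2n\pm j$ seen as denominators in the listed $b_m$, and whose inhomogeneous part is a polynomial in $c_0,\dots,c_{M-1}$ and the parameters. Solving iteratively and using $c_M=(-1)^{M+1}b_M$ for $M\ge2$ produces the closed forms $b_1,\dots,b_5$. I would cross-check the output against the special case $\alpha=\beta=0$ recorded in \eqref{bn}, against the stated initial data, and against the symmetry under $\alpha\leftrightarrow\beta$, which predicts that the odd coefficients $b_3,b_5$ must carry the factor $(\alpha-\beta)$ and vanish on the diagonal $\alpha=\beta$ --- a feature visible in the listed expressions.

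The main obstacle is twofold. First, the recursion is solvable only if the leading multiplier of $c_M$ is nonzero; this is a resonance check, and the apparent poles of the tabulated denominators (for instance at $\alpha+\beta+2n=1$) must be reconciled with the unconditional analyticity already established by observing that the numerators carry compensating factors such as $\alpha+\beta+n$, so that every $b_m$ remains finite. Second, and more prosaically, extracting $b_4$ and $b_5$ in reduced form is a heavy piece of symbolic algebra; I would carry this out with a computer-algebra system and report only the simplified results, exactly as the statement does.
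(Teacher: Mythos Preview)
Your approach is correct and matches the paper's own proof, which is literally the single line ``Substituting \eqref{sgmab} into \eqref{2Pv}, we obtain the results.'' You are doing exactly this---inserting the power-series ansatz into the Jimbo--Miwa--Okamoto $\sigma$-form and equating coefficients of powers of~$\lambda$---but you have supplied considerably more detail: an analyticity argument for the convergence claim (which the paper simply asserts), an explanation of why the coefficient recursion is linear in the top unknown, and consistency checks against the $\alpha=\beta=0$ case and the reflection symmetry.
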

\begin{proof}
Substituting \eqref{sgmab} into \eqref{2Pv}, we obtain the results.
\end{proof}


Let $G$ be the Barnes $G$-function, defined by the functional equation, $G(z+1)=\Gamma(z)\:G(z)$.
For $n$ equal to a positive integer,
$G(1+n)=\prod_{j=1}^{n-1}j!$.

\begin{theorem}\label{zhuT1} The Hankel determinant $D_n(\lambda,\alpha,\beta)$ has the asymptotic expression
 \begin{align}\label{Dlab}
D_n(\lambda,\alpha,\beta)=&D_n(0,\alpha,\beta)\exp\left[\int_0^\lambda\frac{-\sigma_n(-s,\alpha,\beta)-ns-n(n+\beta)}{s}\, ds\right] \notag \\
=&D_n(0,\alpha,\beta)\exp\bigg[b_1(n,\alpha,\beta)\lambda+\frac{b_2(n,\alpha,\beta)}{2}\lambda^2+
\frac{b_3(n,\alpha,\beta)}{3}\lambda^3\notag\\
&~~~~~~~~~~~~~~~~~~~~~+\frac{b_4(n,\alpha,\beta)}{4}\lambda^4+\frac{b_5(n,\alpha,\beta)}{5}\lambda^5+\cdots \bigg],
\end{align}
where $D_n(0,\alpha,\beta)$ is given in (\cite{Mehta2006}, p. 310) by
\begin{align*}
D_n(0,\alpha,\beta)&=\prod_{j=0}^{n-1}\frac{\Gamma(j+2)\Gamma(j+\alpha+1)\Gamma(j+\beta+1)}{n!\Gamma(n+j+\alpha+\beta+1)}\notag\\
&=G(n+1)\frac{ G(n+\alpha +1) G(n+\beta +1) G(n+\alpha +\beta +1)}{G(\alpha +1) G(\beta +1) G(2 n+\alpha +\beta +1)}.
\end{align*}
\end{theorem}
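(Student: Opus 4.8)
The plan is to read \eqref{Dlab} as a chain of two identities followed by the classical evaluation of the undeformed determinant, and to dispatch each piece separately; almost all of the analytic content has in fact already been supplied by the earlier results on $\sigma_n$.

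First I would establish the integral representation (the first equality). Combining the definition \eqref{sgm} of $\sigma_n$ with the logarithmic derivative \eqref{D1Dn}, I would substitute $\lambda \mapsto -s$ in \eqref{sgm} to get $\sigma_n(-s,\alpha,\beta) = -ns - s\,p(n,s) - n(n+\beta)$, whence
\[
\frac{-\sigma_n(-s,\alpha,\beta)-ns-n(n+\beta)}{s} = p(n,s) = \frac{d}{ds}\log D_n(s,\alpha,\beta).
\]
Integrating from $0$ to $\lambda$ and exponentiating then gives the first line of \eqref{Dlab}. The only point requiring care is that the integrand has a removable singularity at $s=0$: since $\sigma_n(0,\alpha,\beta)=-n(n+\beta)$ with finite derivative, the numerator vanishes to first order, so $p(n,s)$ is analytic at $0$ and the integral is well defined.

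Second I would pass to the power-series form (the second equality) by feeding in the expansion \eqref{sgmab}. Replacing $\lambda$ by $-s$ there, the constant term $-n(n+\beta)$ and the linear term cancel exactly against $-ns-n(n+\beta)$, leaving $-\sigma_n(-s,\alpha,\beta)-ns-n(n+\beta)=\sum_{m\geq 1}b_m(n,\alpha,\beta)\,s^m$. Dividing by $s$ and integrating term by term over $[0,\lambda]$ produces $\sum_{m\geq 1}\frac{b_m(n,\alpha,\beta)}{m}\lambda^m$, which is exactly the bracketed series in \eqref{Dlab}. Term-by-term integration is legitimate because \eqref{sgmab} converges on a neighbourhood of the origin, so the whole manipulation stays inside the disc of convergence.

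Finally, the explicit value of $D_n(0,\alpha,\beta)$ is the classical Jacobi/Selberg evaluation: at $\lambda=0$ the Hankel determinant in \eqref{zhu0} becomes a determinant of Beta integrals $\mu_{j+k}=B(j+k+\alpha+1,\beta+1)$, whose value is quoted from Mehta. I would then rewrite the finite product of Gamma factors in terms of Barnes $G$-functions, using $G(z+1)=\Gamma(z)G(z)$ and $G(n+1)=\prod_{j=1}^{n-1}j!$ to collapse each of the four telescoping products into a single $G$-factor. The bulk of the labour in the theorem is therefore not here at all but in the preceding computation of the Taylor coefficients $b_m(n,\alpha,\beta)$ from the $\sigma$-form \eqref{2Pv}; granting that, the present proof is essentially a substitution together with a standard integral evaluation, and the one genuine checkpoint is the convergence and removable-singularity issue at $s=0$ noted above.
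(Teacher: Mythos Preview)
Your argument is correct and is exactly the route the paper implicitly takes: the key identity $\lambda\,\frac{d}{d\lambda}\log D_n(\lambda,\alpha,\beta)=-\sigma_n(-\lambda)-n\lambda-n(n+\beta)$ obtained by combining \eqref{D1Dn} and \eqref{sgm} is already recorded in Subsection~4.3, and the theorem then follows by integrating, inserting the series \eqref{sgmab}, and quoting Mehta's evaluation of $D_n(0,\alpha,\beta)$. The only point you add beyond the paper's treatment is the explicit check that the integrand has a removable singularity at $s=0$, which is a welcome clarification.
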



\begin{corollary} Suppose that $\alpha=\beta=0$. Then $D_n(\lambda,0,0)$ has the following expansion,
\begin{equation}\label{Dl00}
D_n(\lambda,0,0)=\frac{G(n+1)^4}{G(2n+1)}\cdot\exp\left[-\frac{n\lambda}{2}+\frac{b_2(n)}{2}\lambda^2+\frac{ b_4(n)}{4}\lambda^4  +\frac{ b_6(n)}{6}\lambda^6+\cdots \right],
\end{equation}
where $b_m(n)$ are in agreement with those in \eqref{bn}.
\end{corollary}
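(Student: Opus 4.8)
The plan is to read off the Corollary as the $\alpha=\beta=0$ specialization of Theorem \ref{zhuT1}, combined with the explicit power-series coefficients recorded for $\sigma_n$ in the case $\alpha=\beta=0$. First I would set $\alpha=\beta=0$ in the asymptotic expression \eqref{Dlab}. The exponent there is the antiderivative of $\bigl(-\sigma_n(-s,\alpha,\beta)-ns-n(n+\beta)\bigr)/s$; feeding in the series $\sigma_n(\lambda,\alpha,\beta)=-n(n+\beta)+n\lambda-\sum_{m\ge 1}b_m(n,\alpha,\beta)(-\lambda)^m$ from \eqref{sgmab}, the numerator collapses to $\sum_{m\ge 1}b_m(n,\alpha,\beta)\,s^m$, and termwise integration over $[0,\lambda]$ yields the closed form $\sum_{m\ge 1}\frac{b_m(n,\alpha,\beta)}{m}\lambda^m$ already displayed in \eqref{Dlab}.

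Next I would evaluate the prefactor $D_n(0,\alpha,\beta)$ at $\alpha=\beta=0$. Using the Barnes-$G$ product in Theorem \ref{zhuT1} together with the normalization $G(1)=1$, the three factors $G(n+\alpha+1)G(n+\beta+1)G(n+\alpha+\beta+1)$ all become $G(n+1)$ while $G(\alpha+1)G(\beta+1)=1$, leaving $D_n(0,0,0)=G(n+1)^4/G(2n+1)$, which is exactly the constant appearing in \eqref{Dl00}.

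It remains to match the exponent. Here I would invoke the parity established in the proof of \eqref{bn}: the auxiliary function $C_n(\lambda)$ is even in $\lambda$, since $\widehat{C}_n(\lambda):=C_n(-\lambda)$ satisfies the same equation \eqref{Cllll} with the same initial data $C_n(0)=C_n'(0)=0$. This forces every odd coefficient $b_{2k+1}(n)$ to vanish, so the terms $\tfrac{b_3}{3}\lambda^3,\tfrac{b_5}{5}\lambda^5,\dots$ disappear and the exponent reduces to $-\tfrac{n}{2}\lambda+\tfrac{b_2(n)}{2}\lambda^2+\tfrac{b_4(n)}{4}\lambda^4+\tfrac{b_6(n)}{6}\lambda^6+\cdots$, which is \eqref{Dl00}. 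To confirm agreement with \eqref{bn} I would check a couple of coefficients directly, e.g. $b_1(n,0,0)=-n^2/(2n)=-n/2$ and $b_2(n,0,0)=n^4/\bigl[(2n-1)(2n)^2(2n+1)\bigr]=n^2/[4(4n^2-1)]$, and observe that the explicitly listed odd coefficients $b_3(n,\alpha,\beta)$ and $b_5(n,\alpha,\beta)$ each carry a factor $(\alpha-\beta)(\alpha+\beta)$ that kills them at $\alpha=\beta=0$.

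I do not expect a genuine obstacle, since the substantive analytic work is already contained in Theorem \ref{zhuT1} and in the $\sigma_n$-expansion. The only point requiring care is bookkeeping: verifying that the general coefficients $b_m(n,\alpha,\beta)$ degenerate correctly to the $b_m(n)$ of \eqref{bn}, and that the parity argument is what suppresses the odd-order terms so that \eqref{Dl00} has only even powers beyond the linear one.
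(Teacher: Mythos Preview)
Your proposal is correct and follows exactly the route the paper intends: the Corollary is stated without proof as the direct specialization of Theorem~\ref{zhuT1} to $\alpha=\beta=0$, using the Barnes-$G$ evaluation of $D_n(0,0,0)$ and the evenness of $C_n(\lambda)$ established just before \eqref{Cl3} to kill the odd-order terms $b_3,b_5,\dots$. Your coefficient checks ($b_1,b_2$ and the $(\alpha-\beta)(\alpha+\beta)$ factor in $b_3,b_5$) are exactly the bookkeeping the paper leaves implicit.
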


\section{The asymptotic expression of $\mathbb{P}(c,\alpha,\beta,n)$}
We extend the definition of $D_n(\lambda , \alpha ,\beta )$ via the formula (\ref{zhu0}) to complex $\lambda$ and obtain an entire function. Then the Laplace inversion formula applied to (\ref{ZZZ}) gives
\begin{equation}\label{zhu}
\mathbb{P}(c,\alpha, \beta,n)={{1}\over{2\pi D_n(0, \alpha ,\beta )}}\int_{-\infty}^\infty {\rm e}^{-{\rm i} c\lambda }D_n(-{\rm i}\lambda , \alpha ,\beta )d\lambda.
\end{equation}
 In Appendix A, we give more details about the properties of the complex function $D_n(\lambda , \alpha ,\beta )$ and the support of $\mathbb{P}(c,\alpha, \beta,n)$. By Theorem \ref{zhuT1}, we have
$$\mathbb{P}(c,\alpha, \beta,n)={{1}\over{2\pi}}\int_{-\infty}^\infty \exp \left[-(b_1+c){\rm i}\lambda -{{b_2}\over{2}} \lambda^2 +{{b_3}\over{3}}{\rm i}\lambda^3 +{{b_4}\over{4}}\lambda^4+\dots \right] d\lambda ,$$
\noindent where $b_m=b_m(n, \alpha ,\beta )$ are the cumulants of  $\mathbb{P}(c,\alpha, \beta ,n)$, up to factors involving only $m$, and the values of the $b_m$ are listed in Subsection 4.4.

Edgeworth showed how to recover a probability density function from the cumulants by what is known as type A series, See \cite{SO}.

\begin{theorem}
Then $\mathbb{P}(c,\alpha,\beta,n)$ has the following asymptotic expansion, 
$$\mathbb{P}(c,\alpha,\beta,n)\sim \widehat{\mathbb{P}}(c,\alpha,\beta,n)$$
\noindent where
\begin{align}\widehat{\mathbb{P}}(c,\alpha,\beta,n)=
& \frac{1}{\sqrt{2\pi b_2}}\exp{\left[-\frac{\left(c+b_1\right){}^2}{2 b_2}\right]}
   \bigg\{1-\frac{  \left(c+b_1\right) \left[\left(c+b_1\right){}^2-3 b_2\right]b_3}{3 b_2^3}\notag\\ \notag \\ &+\frac{ \left[\left(c+b_1\right){}^4-6 b_2 \left(c+b_1\right){}^2+3 b_2^2\right]b_4}{4 b_2^4}+\cdots\bigg\} \label{Gammacc},
\end{align}
and $b_m(n,\alpha,\beta)$ are listed in subsection 4.4.
\end{theorem}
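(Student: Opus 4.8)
The plan is to begin from the Fourier inversion representation displayed just before the statement,
\[
\mathbb{P}(c,\alpha,\beta,n)=\frac{1}{2\pi}\int_{-\infty}^\infty\exp\left[-(b_1+c){\rm i}\lambda-\frac{b_2}{2}\lambda^2+\frac{b_3}{3}{\rm i}\lambda^3+\frac{b_4}{4}\lambda^4+\cdots\right]d\lambda,
\]
which itself follows from \eqref{zhu}, Theorem \ref{zhuT1}, and the substitution $\lambda\mapsto-{\rm i}\lambda$ in the cumulant series $\log\mathcal{M}_f=\sum_{m\ge1}(b_m/m)\lambda^m$. I would isolate the Gaussian part $-(b_1+c){\rm i}\lambda-\tfrac{b_2}{2}\lambda^2$ of the exponent, encoding mean $-b_1$ and variance $b_2$, from the higher-cumulant remainder $\sum_{m\ge3}(b_m/m)(-{\rm i}\lambda)^m$, and factor the integrand as a Gaussian times a correction factor.

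The mechanism of the expansion is the operational identity that, under the Gaussian Fourier integral, each power $(-{\rm i}\lambda)$ is converted into a derivative $d/dc$:
\[
\frac{1}{2\pi}\int_{-\infty}^\infty(-{\rm i}\lambda)^k\,{\rm e}^{-(c+b_1){\rm i}\lambda-\frac{b_2}{2}\lambda^2}\,d\lambda=\frac{d^k}{dc^k}\,g(c),\qquad g(c):=\frac{1}{\sqrt{2\pi b_2}}\exp\left[-\frac{(c+b_1)^2}{2b_2}\right],
\]
since $\tfrac{d}{dc}{\rm e}^{-(c+b_1){\rm i}\lambda}=-{\rm i}\lambda\,{\rm e}^{-(c+b_1){\rm i}\lambda}$. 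The derivatives of the Gaussian produce the (probabilists') Hermite polynomials, $\tfrac{d^k}{dc^k}g=(-1)^k b_2^{-k/2}He_k\!\bigl((c+b_1)/\sqrt{b_2}\bigr)g(c)$. I would then expand the correction factor $\exp[\sum_{m\ge3}(b_m/m)(-{\rm i}\lambda)^m]=1+\tfrac{b_3}{3}(-{\rm i}\lambda)^3+\tfrac{b_4}{4}(-{\rm i}\lambda)^4+\cdots$ as a power series in $b_3,b_4,\dots$, integrate term by term, and insert $He_3(u)=u^3-3u$ and $He_4(u)=u^4-6u^2+3$. The $k=3$ term then yields $-(c+b_1)[(c+b_1)^2-3b_2]b_3/(3b_2^3)$ and the $k=4$ term yields $[(c+b_1)^4-6b_2(c+b_1)^2+3b_2^2]b_4/(4b_2^4)$, each multiplying $g(c)$, which is exactly the bracketed content of \eqref{Gammacc}.

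The delicate point, and the main obstacle, is justifying that \eqref{Gammacc} is a genuine asymptotic expansion rather than a formal rearrangement. The series for the correction factor is not convergent after integration; one must group contributions by the order in which the standardized cumulants $b_m/b_2^{m/2}$ $(m\ge3)$ enter, and verify that these quantities tend to $0$ as $n\to\infty$, so that successive terms are of decreasing magnitude. The explicit formulas of Subsection 4.4 and \eqref{bn} confirm this --- for instance, in the $\alpha=\beta=0$ case $b_2\to\tfrac{1}{16}$ while $b_4=O(n^{-4})$. The convergence of the inversion integral and the legitimacy of integrating polynomial-times-Gaussian term by term rely on the Paley--Wiener and entire-function properties of $D_n(-{\rm i}\lambda,\alpha,\beta)$ recorded in Appendix A, which supply the necessary decay. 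The hardest part of a fully rigorous treatment is the remainder estimate: one must bound the tail of the Fourier integral, where the Gaussian factorization is least accurate, and show that the truncation error is of smaller order than the last retained term, i.e.\ establish honest Edgeworth-type error bounds in place of the formal term-by-term matching above.
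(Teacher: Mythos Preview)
Your proposal is correct and follows precisely the route the paper takes: the paper simply remarks that ``Edgeworth showed how to recover a probability density function from the cumulants by what is known as type A series'' and cites \cite{SO}, giving no further derivation. Your explicit Fourier-inversion calculation---factoring out the Gaussian part, converting $(-{\rm i}\lambda)^k$ to $d^k/dc^k$ acting on the normal density, and reading off the Hermite polynomials $He_3$, $He_4$---is exactly the Edgeworth type~A mechanism, and your concerns about rigorous remainder bounds go beyond what the paper itself supplies.
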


In Appendix A, we show that $\mathbb{P}(c, \alpha, \beta ,n)$ is supported on $[0,n]$. This does not conflict with the approximate expression
$\widehat{\mathbb{P}}(c, \alpha, \beta ,n)$, since the Gaussian factor $\exp \left[-(c+b_1)^2/(2b_2)\right]$ decays very rapidly outside $[0,n]$.


\begin{theorem}
Suppose $\alpha=\beta=0$. Then the probability density function of the center of mass,  $\mathbb{P}(c,0,0,n), $ has the asymptotic expression
$$ \mathbb{P}(c,0,0,n)~\sim~ \widehat{\mathbb{P}}(c,0,0,n)$$
where
\begin{align}\widehat{\mathbb{P}}(c,0,0,n)=&
 \sqrt{\frac{2(4n^2-1)}{n^2\pi}}\exp\left[2\Bigl(\frac{1}{n^2}-4\Bigr)\Bigl(c-\frac{n}{2}\Bigr)^2\right]\\
&\times\bigg[1+\frac{ \eta_{n_1}(c)}{4n^2-9}\notag +\frac{\eta_{n_2}(c)}{16n^4-136n^2+225}+\cdots \bigg]\label{Agammacc},
\end{align}
in which the coefficients are
\begin{align*}
 \eta_{n_1}(c)&= \frac{4\left(c-\frac{n}{2}\right)^4}{n^6}-\frac{2\left(c-\frac{n}{2}\right)^2\left[16\left(c-\frac{n}{2}\right)^2-3\right]}{n^4}
 +\frac{64\left(c-\frac{n}{2}\right)^4-24\left(c-\frac{n}{2}\right)^2+\frac{3}{4}}{n^2},\notag \\
  \eta_{n_2}(c)
  &= -\frac{64\left(c-\frac{n}{4}\right)^6/3}{n^{10}}+\frac{8192\left(c-\frac{n}{2}\right)^6/3-2560\left(c-\frac{n}{2}\right)^4+480\left(c-\frac{n}{2}\right)^2-10}{n^2}\notag\\
  &+\frac{80\left(c-\frac{n}{4}\right)^4\left[8\left(c-\frac{n}{4}\right)^2/3-1\right]}{n^8}-\frac{4\left(c-\frac{n}{4}\right)^2\left[128\left(c-\frac{n}{4}\right)^4
  +120\left(c-\frac{n}{2}\right)^2-15\right]}{n^6} \notag \\
  &-\frac{2048\left(c-\frac{n}{4}\right)^6/3-120\left(c-\frac{n}{2}\right)^2+5}{n^4}.
\end{align*}
\end{theorem}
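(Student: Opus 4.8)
The plan is to obtain \eqref{Agammacc} as a direct specialization of the general Gram--Charlier (Edgeworth type A) expansion \eqref{Gammacc} to $\alpha=\beta=0$, feeding in the explicit cumulant coefficients recorded in \eqref{bn}. Two structural facts make the specialization clean. First, the proof of \eqref{sgmb} established that $C_n(\lambda)$ is even in $\lambda$, so all odd cumulants of order $\ge 3$ vanish, i.e. $b_3=b_5=\cdots=0$; consequently every odd Hermite correction in \eqref{Gammacc} (the displayed $b_3$-term and its higher analogues) drops out and only even-order terms survive. Second, since $b_1=-n/2$, the natural centering variable is $c+b_1=c-\tfrac{n}{2}$, in which the surviving terms organize into the polynomials $\eta_{n_1},\eta_{n_2}$.

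First I would treat the Gaussian factor. Substituting $b_1=-\tfrac{n}{2}$ and $b_2=\tfrac{n^2}{4(4n^2-1)}$ into $\tfrac{1}{\sqrt{2\pi b_2}}\exp\!\big[-(c+b_1)^2/(2b_2)\big]$ reproduces the prefactor $\sqrt{2(4n^2-1)/(n^2\pi)}$ and the exponent $2\big(\tfrac{1}{n^2}-4\big)\big(c-\tfrac{n}{2}\big)^2$, because $-\tfrac{1}{2b_2}=-\tfrac{2(4n^2-1)}{n^2}=2\big(\tfrac{1}{n^2}-4\big)$. This already matches the stated leading behaviour.

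Next I would extract the two correction terms in order. Writing $z=(c-\tfrac{n}{2})/\sqrt{b_2}$, the $b_4$-term of \eqref{Gammacc} equals $\tfrac{b_4}{4b_2^{2}}\,\mathrm{He}_4(z)$ with $\mathrm{He}_4(z)=z^4-6z^2+3$; it carries the denominator factor $(4n^2-9)$ coming from $b_4$, and after the powers of $(4n^2-1)$ shared between $b_4$ and $b_2^2$ cancel, the remaining polynomial in $c-\tfrac{n}{2}$ and $1/n$ is exactly $\eta_{n_1}(c)/(4n^2-9)$. To reach the next order I would continue the series one step beyond what is displayed in \eqref{Gammacc}: the relevant contribution is the $\mathrm{He}_6$ term, with coefficient proportional to $b_6/b_2^{3}$. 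Crucially, there is no $b_3^2$ admixture at this order (which would otherwise appear when one exponentiates the cumulant generating function), precisely because $b_3=0$, so only the single $b_6$ term contributes. Substituting $b_6$ from \eqref{bn}, expanding $\mathrm{He}_6(z)=z^6-15z^4+45z^2-15$, and performing the analogous cancellation leaves the factor $(16n^4-136n^2+225)$ in the denominator and $\eta_{n_2}(c)$ in the numerator; the omitted higher terms (starting with $b_8$) supply the ``$\cdots$''.

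The main obstacle is purely computational: the cancellations must be arranged so that the factors $(4n^2-1)$ common to the cumulants $b_{2m}$ and to the powers of $b_2$ disappear completely, leaving only the genuinely new denominators $(4n^2-9)$ and $(16n^4-136n^2+225)$, and so that the Hermite polynomials, once expanded and collected by powers of $1/n$, produce the stated coefficients $\eta_{n_1}$ and $\eta_{n_2}$. Tracking these cancellations and correctly normalizing the $\mathrm{He}_6$ coefficient is the delicate part and is most safely confirmed with a computer algebra system; everything else is a direct substitution into the already-established expansion \eqref{Gammacc}.
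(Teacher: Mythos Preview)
Your proposal is correct and is precisely the route the paper takes: Theorem~5.2 is stated in the paper without a separate proof because it is the direct specialization of the general Edgeworth expansion \eqref{Gammacc} to $\alpha=\beta=0$, using the even-parity of $C_n(\lambda)$ established in Proposition~4.3 (so $b_3=b_5=\cdots=0$) and the explicit values of $b_1,b_2,b_4,b_6$ from \eqref{bn}. Your identification of the Gaussian prefactor, the $\mathrm{He}_4$ contribution giving $\eta_{n_1}/(4n^2-9)$, and the $\mathrm{He}_6$ contribution (with no $b_3^2$ admixture) giving $\eta_{n_2}/(16n^4-136n^2+225)$ matches the paper's implicit derivation exactly, and you are right that the residual algebra is most safely checked symbolically.
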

\begin{figure}
\centering
\subfigure[$n$=2]{
 \centering
  \includegraphics[width=7
cm,height=3.9cm]{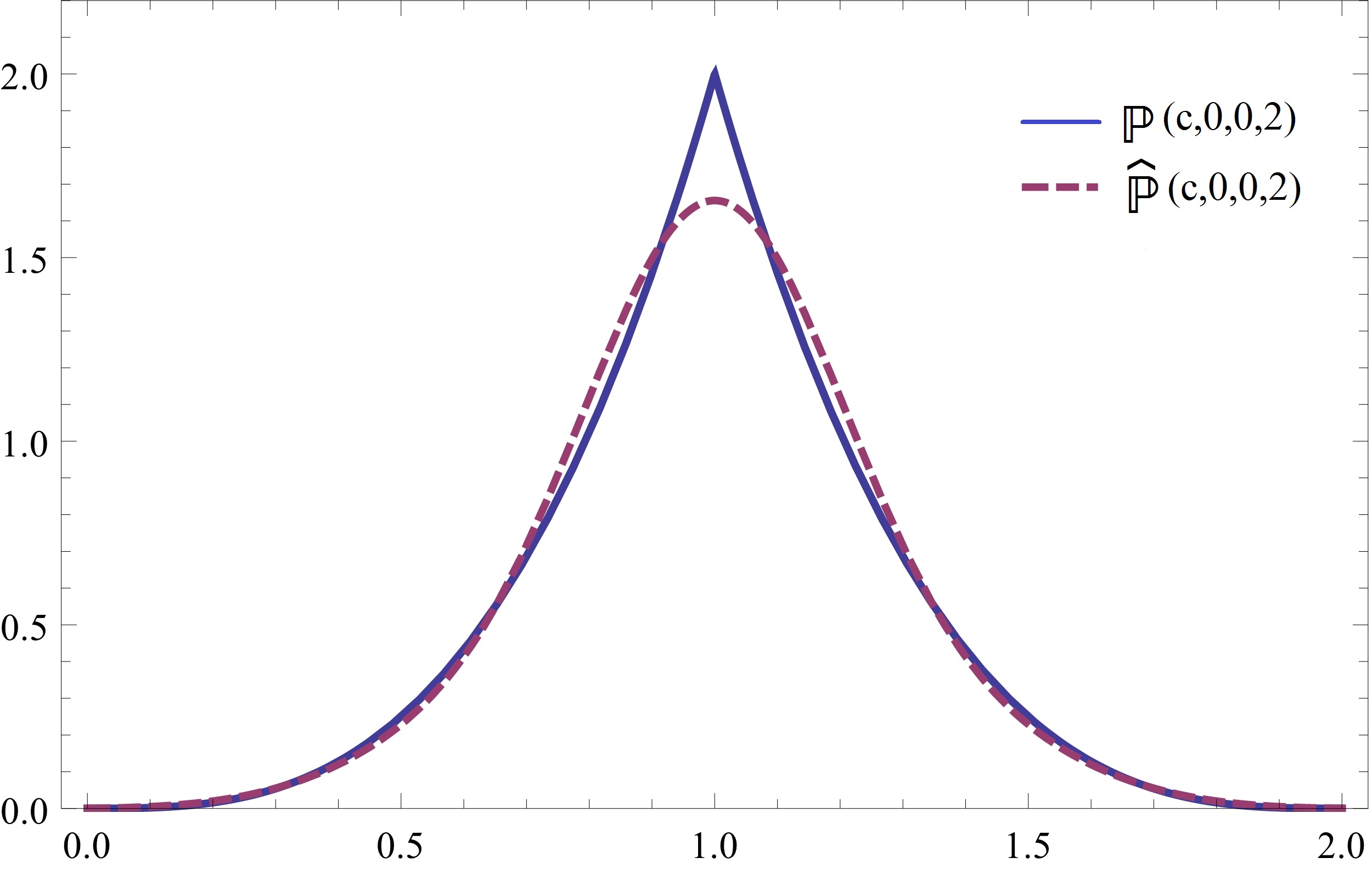}}
\subfigure[$n$=3]{
   \centering
  \includegraphics[width=7
cm,height=4cm]{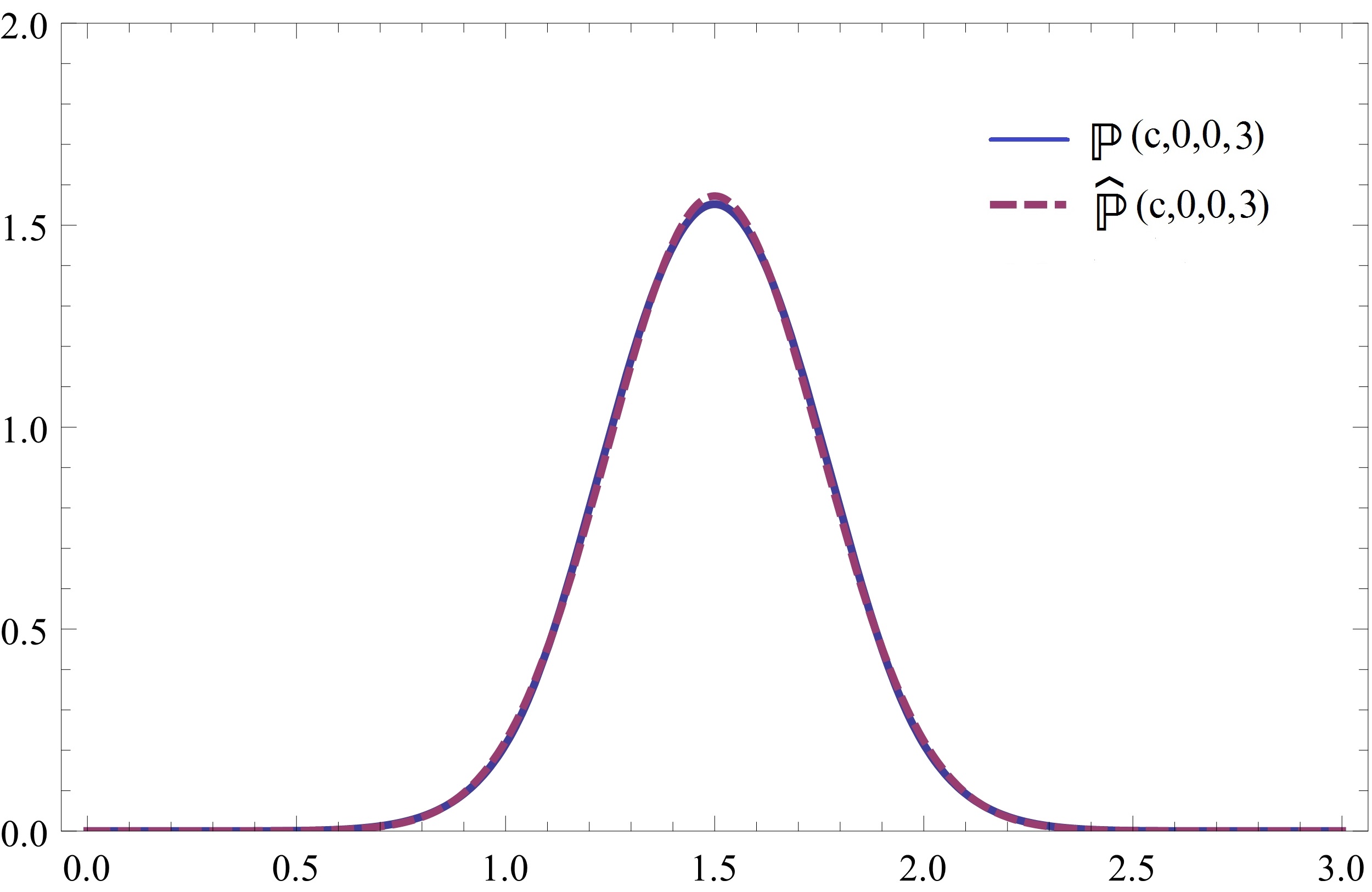} }
\subfigure[$n$=4]{
   \centering
  \includegraphics[width=7
cm,height=4cm]{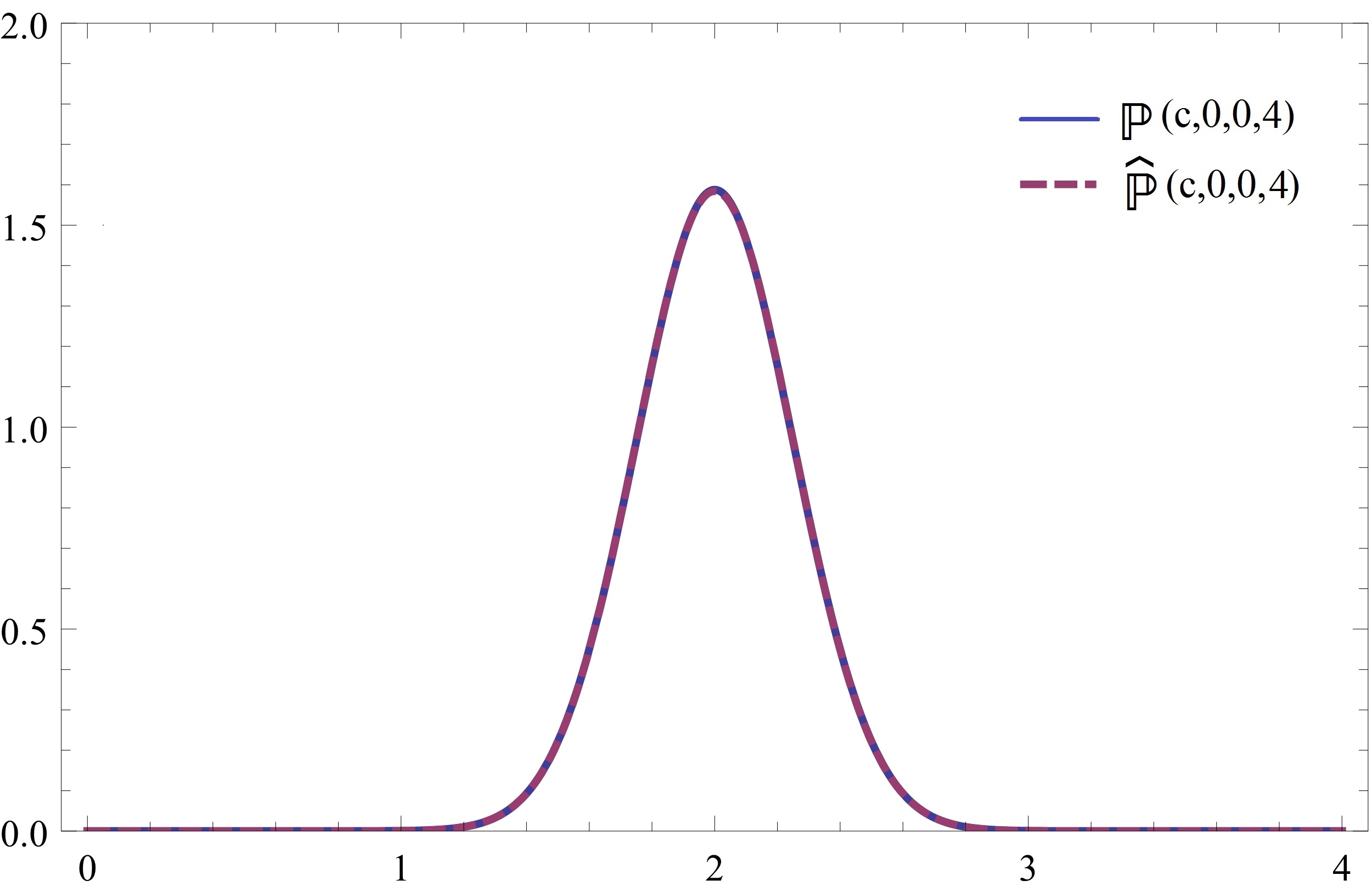} }
\subfigure[$n$=5]{
   \centering
  \includegraphics[width=7
cm,height=4cm]{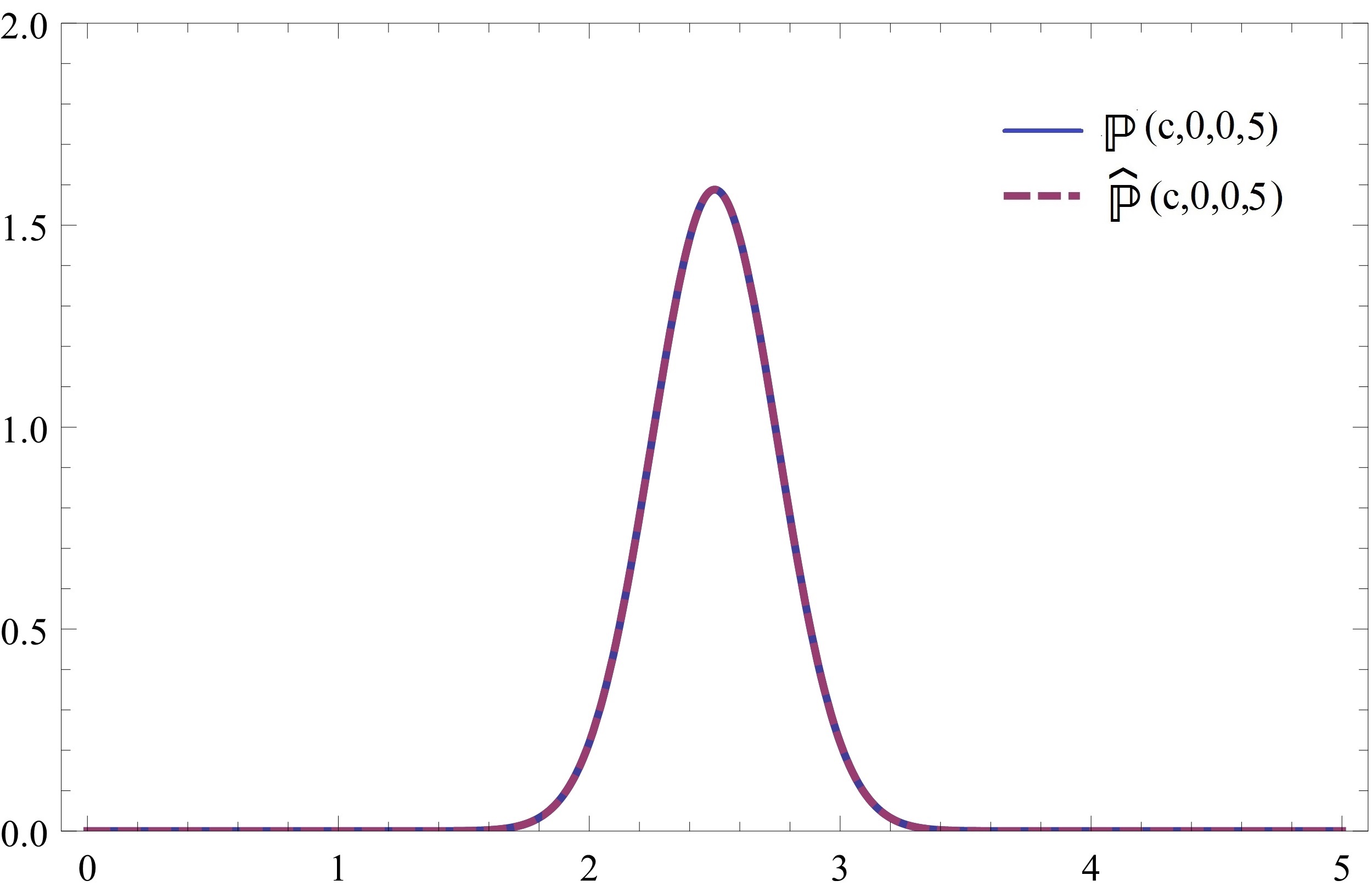} }
\caption{The coefficients distribution of $\mathbb{P}(c,0,0,n) $ and $\widehat{\mathbb{P}}(c,0,0,n) $}
\label{fig:1}
\end{figure}
\begin{figure}
\centering
\subfigure[$n$=2]{
 \centering
  \includegraphics[width=7
cm,height=3.9cm]{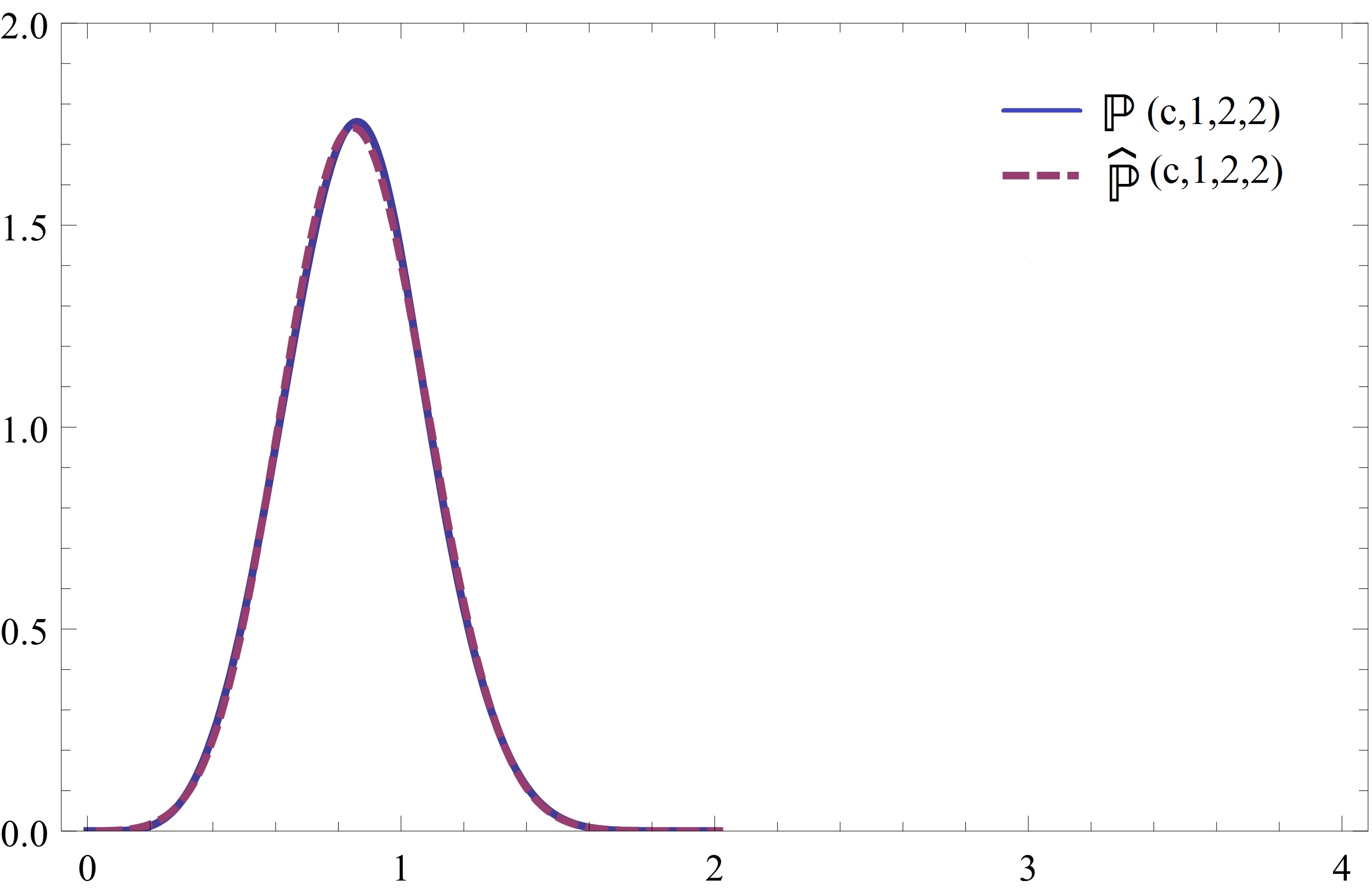}}
\subfigure[$n$=3]{
   \centering
  \includegraphics[width=7
cm,height=3.9cm]{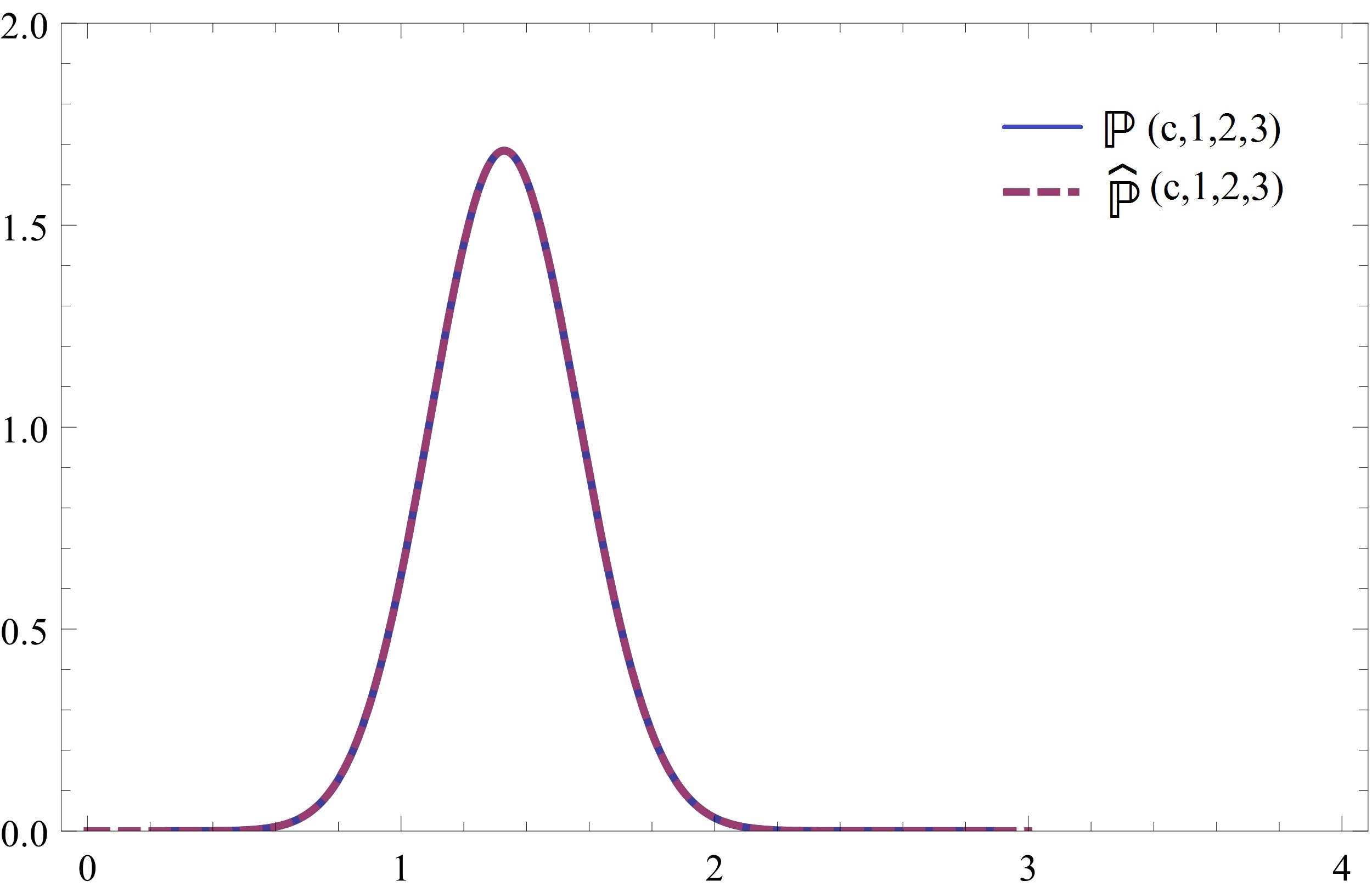} }
\subfigure[$n$=4]{
   \centering
  \includegraphics[width=7
cm,height=3.9cm]{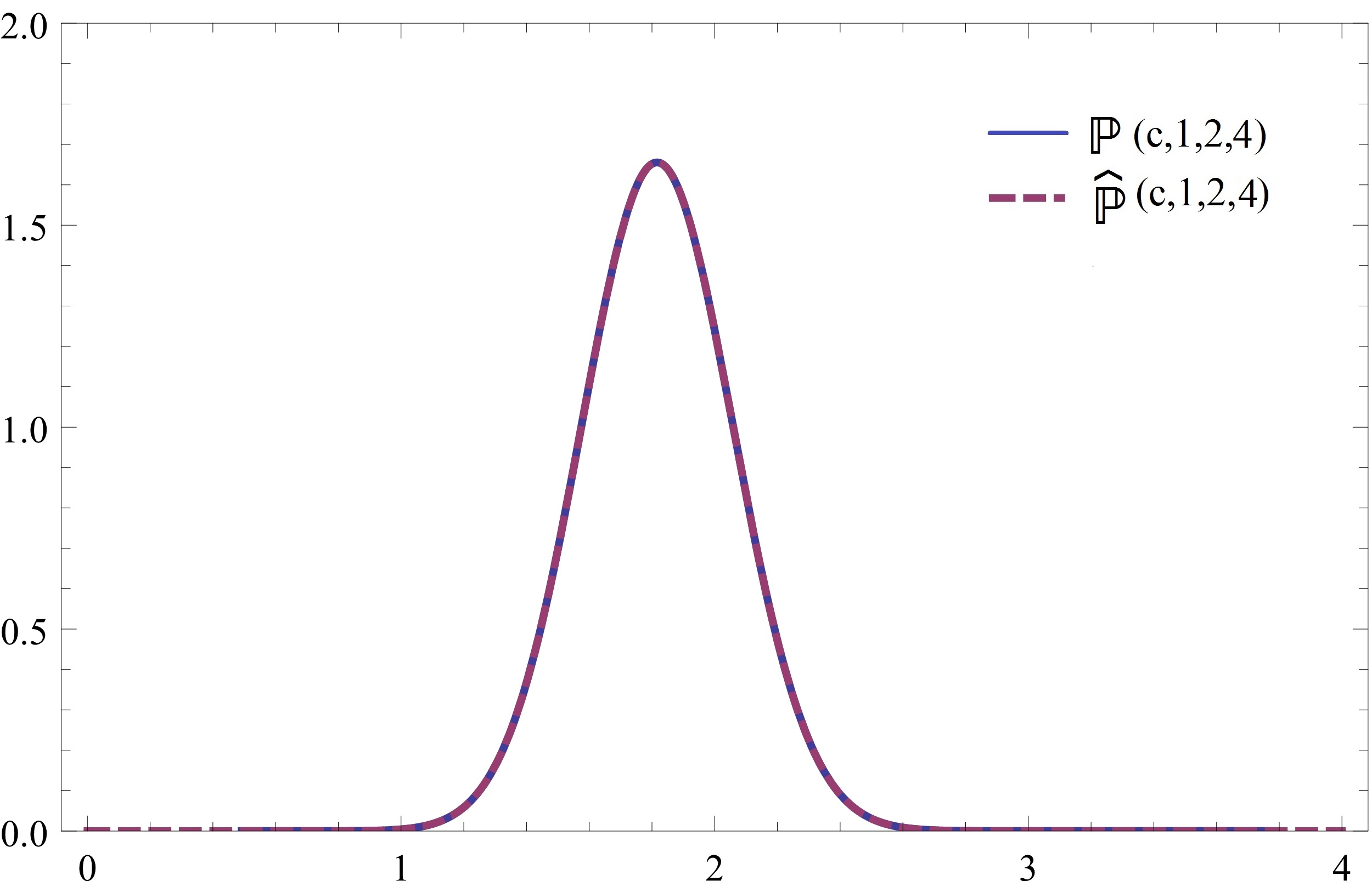} }
\caption{Compares $\mathbb{P}(c,1,2,n)$ with $\widehat{\mathbb{P}}(c,1,2,n)$}
\label{fig:2}
\end{figure}

\begin{rem}\begin{enumerate}
 Compare $\widehat{\mathbb{P}}(c,\alpha,\beta,n)$ with $\mathbb{P}(c,\alpha,\beta,n)$.  In Appendix A, we list the computed formulas for $\mathbb{P}(c,\alpha,\beta,n)$. In the Figures, we find there is almost coincidence when $n\geq3$, see Figure 1 ($\alpha=0$, $\beta=0$), Figure 2 ($\alpha=1$, $\beta=2$). The other cases exhibit similar behavior, so we infer that the approximation $\widehat{\mathbb{P}}(c,\alpha,\beta,n)$ is accurate when $n>3$.
 The expression $\widehat{\mathbb{P}}(c,0,0,n)$ here gives an easy way to characterise the coefficients of $\mathbb{P}(c,0,0,n)$ conjectured in \cite{KRRR}.
\end{enumerate}
\end{rem}

\begin{figure}
\centering
\subfigure[$n$=2]{
 \centering
  \includegraphics[width=7
cm,height=3.9cm]{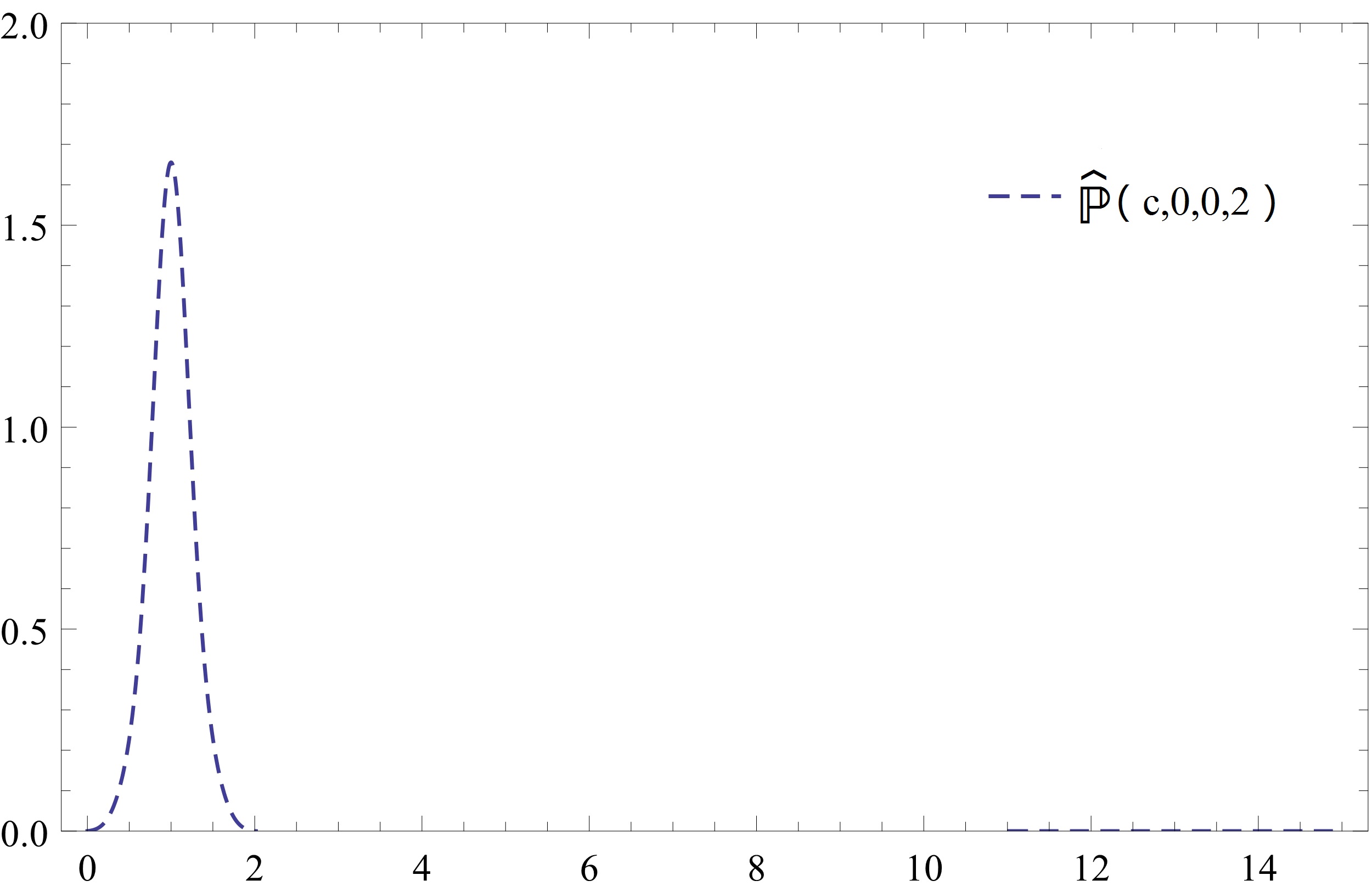}}
\subfigure[$n$=10]{
   \centering
  \includegraphics[width=7
cm,height=3.9cm]{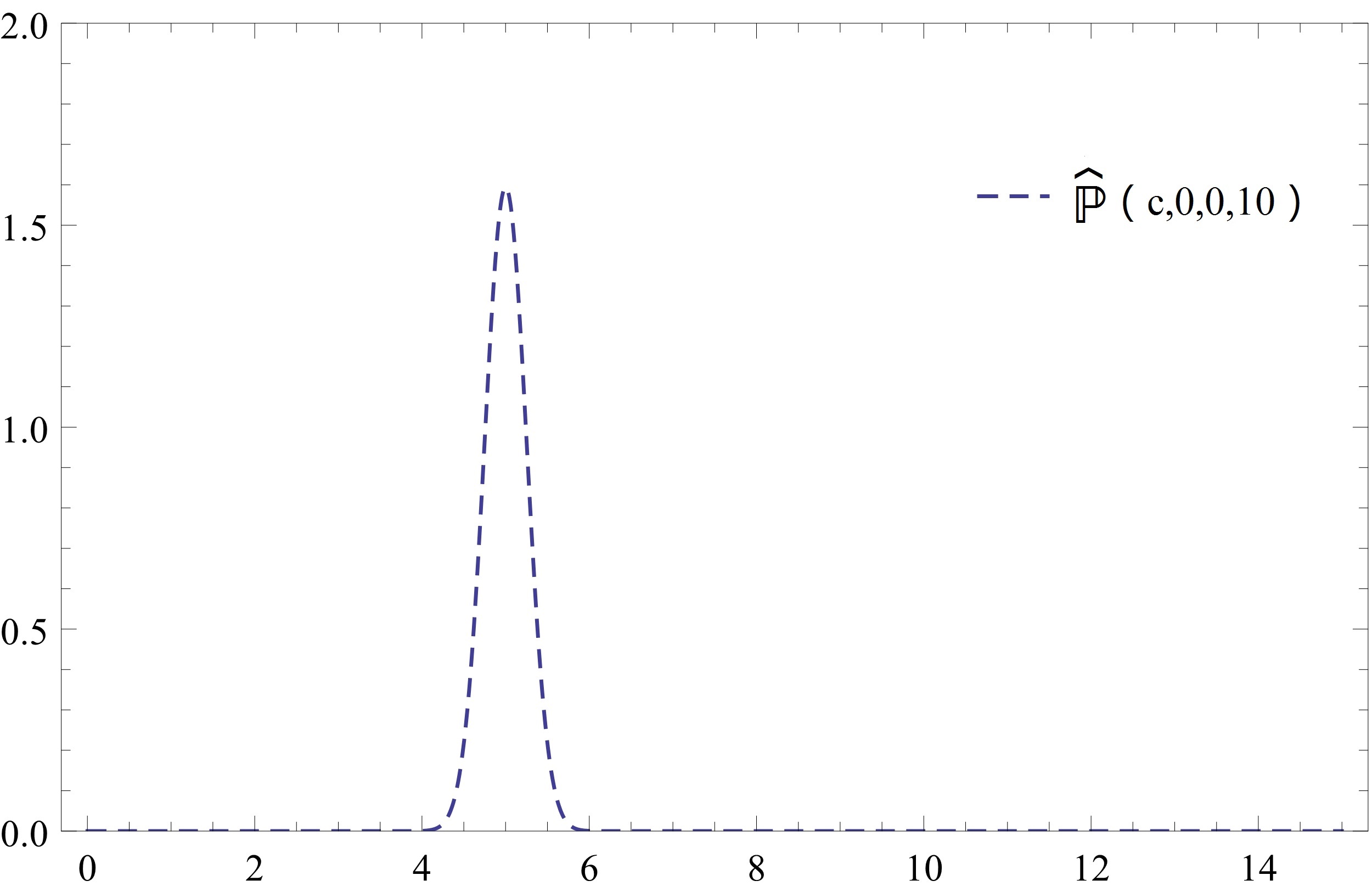} }
\subfigure[$n$=15]{
   \centering
  \includegraphics[width=7
cm,height=3.9cm]{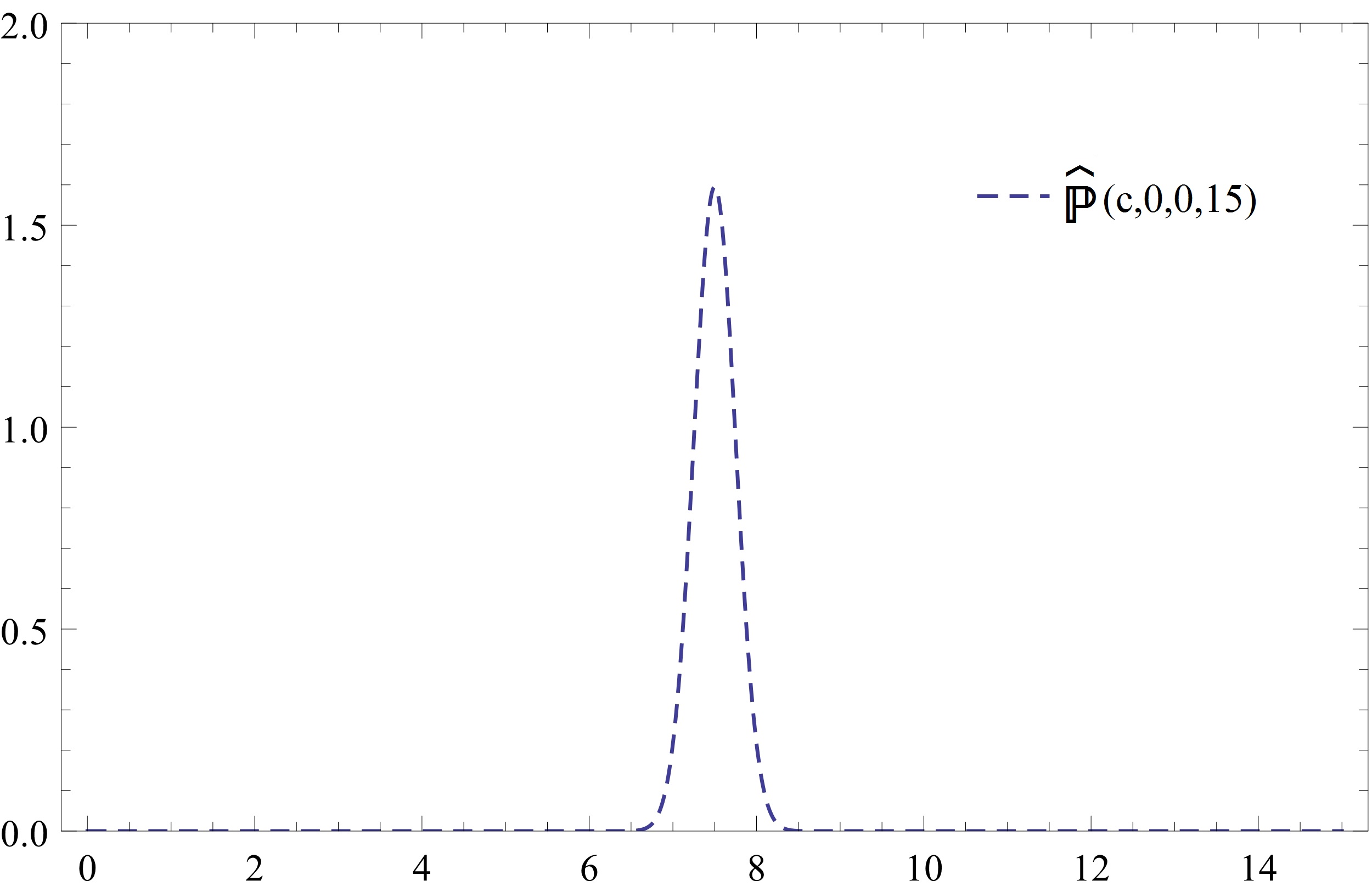} }
\caption{$\widehat{ \mathbb{P}}(c,0,0,n)$ changing with $n$}
\label{fig:2}
\end{figure}


\section{Uniform convexity}

Let $w_{0}(x)$ be a weight of the form $w_{0}(x)={\rm e}^{-v_{0}(x)}$ where $v_{0}$ is a continuously differentiable and convex real function such $v_{0}(x)\geq\log\left(1+x^{2}\right)$ for all $x$ such that $x^{2}\geq x_{0}$. Then the energy
\begin{equation}\label{6.1}
E_{v}(\rho)=\int_{-\infty}^{\infty}v_{0}(x)\rho(dx)+\int_{-\infty}^{\infty}\int_{-\infty}^{\infty}\log\frac{1}{|x-y|}\rho(dx)\rho(dy)
\end{equation}
is defined for all non-atomic probability measure $\rho$ that have finite logarithmic energy. Then the minimal energy $I_{v}=\inf\left\{E_{v}(\rho)\right\}$ is attained for a unique probability measure $\rho_{0}$ called the equilibrium measure that has compact support $[a,b]$, and $\rho_{0}$ is absolutely continuous with respect to Lebesgue measure, so $\rho_{0}(dx)=\sigma_{0}(x)dx$ for some probability density function $\sigma_{0}$. See \cite{DKM} and \cite{ST} for details. Also, there exist a constant $C_{v}$ such that $\sigma_{0}$ is determined almost everywhere by the inequality
\begin{equation}\label{6.2}
v_{0}(x)\geq2\int_{a}^{b}\log|x-y|\sigma_{0}(y)dy+C_{v}~~~~~~~~~(x\in\mathbf{R}),
\end{equation}
with equality if and only if $x\in(a,b)$.

The following is a complication of results which are known, or similar to those in the literature, See \cite{B2,LP}.

\begin{proposition}
Suppose that $p(x)=f(x)/\pi\sqrt{1-x^{2}}$ and $q(x)=g(x)/\pi\sqrt{1-x^{2}}$ are probability density functions on $[-1,1]$ where $f,g\in L^{2}(dx/\pi\sqrt{1-x^{2}})$ have Chebyshev expansions $f(x)=\sum_{k=0}^{\infty}b_{k}T_{k}(x)$.

(i) Then
\begin{equation}\label{6.8}
I(p,q)=\int_{-1}^{1}\int_{-1}^{1}\log\frac{1}{|x-y|}\left(p(x)-q(x)\right)\left(p(y)-q(y)\right)dxdy
\end{equation}
satisfies
\begin{equation}\label{6.9}
I(p,q)=\sum_{k=1}^{\infty}\left(a_{k}-b_{k}\right)^{2}/(2k).
\end{equation}

(ii)
In particular, the equilibrium measure satisfies $I(p,\sigma_{0})=E_{v}(p)-E_{v}(\sigma_{0})$.

(iii)
Suppose that $v$ is uniformly convex, so that $v''(x)\geq \gamma $ for all $x$ and some $\gamma>0$. Suppose that $p_{n}$ is a sequence of probability density functions as above such that $E_{v}(p_{n})\rightarrow E_{v}(\sigma_{0})$ as $n\rightarrow\infty$. Then $p_{n}\rightarrow\sigma_{0}$ in the weak topology.

(iv)
Suppose that $v$ is a polynomial. Then $\sigma_{0}(x)=\sum_{k=1}^{\infty}a_{k}T_{k}/\pi\sqrt{1-x^{2}}$ where only finitely many of the $a_{k}$ are non-zero.
\end{proposition}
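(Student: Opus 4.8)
The plan is to treat part (i) as the computational core and to obtain (ii)--(iv) as consequences of it together with the equilibrium condition \eqref{6.2}. Writing $g(x)=\sum_k a_k T_k(x)$ so that $p$ carries coefficients $b_k$ and $q$ carries $a_k$, I would first record the Chebyshev expansion of the logarithmic kernel. Setting $x=\cos\theta$, $y=\cos\phi$, using $\cos\theta-\cos\phi=-2\sin\frac{\theta+\phi}{2}\sin\frac{\theta-\phi}{2}$ and the classical Fourier series $\log|2\sin\frac{\psi}{2}|=-\sum_{k\ge1}\frac{\cos k\psi}{k}$, one gets
\[
\log\frac{1}{|x-y|}=\log 2+2\sum_{k=1}^{\infty}\frac{T_k(x)T_k(y)}{k},\qquad x,y\in[-1,1].
\]
Substituting this into \eqref{6.8} and integrating term by term against $(p-q)(x)(p-q)(y)$, the orthogonality relation $\int_{-1}^{1}T_j(x)T_k(x)\,\frac{dx}{\pi\sqrt{1-x^2}}=\tfrac12\delta_{jk}$ (for $j,k\ge1$) collapses the double sum to a single one. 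Because $p$ and $q$ are probability densities their zeroth coefficients both equal $1$, so $a_0=b_0$ and the $\log 2$ term, which carries the factor $\bigl(\int(p-q)\bigr)^2$, vanishes; this is precisely why the sum in \eqref{6.9} starts at $k=1$. The surviving terms give $2\sum_{k\ge1}\frac1k\bigl(\tfrac12(a_k-b_k)\bigr)^2=\sum_{k\ge1}(a_k-b_k)^2/(2k)$, which is \eqref{6.9}.

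For (ii) I would expand the quadratic form and compare with the energy difference. Writing $U^{\sigma_0}(x)=\int\log\frac{1}{|x-y|}\sigma_0(y)\,dy$, the pieces $\iint\log\frac{1}{|x-y|}p(x)p(y)$ cancel between the two sides, and the identity reduces to $-2\int U^{\sigma_0}(x)(p-\sigma_0)(x)\,dx=\int v_0(x)(p-\sigma_0)(x)\,dx$. On the support of $\sigma_0$ the Euler--Lagrange condition \eqref{6.2} holds with equality, i.e. $2U^{\sigma_0}(x)=C_v-v_0(x)$; substituting this, and using that $p$ is supported inside the support of $\sigma_0$, turns the left side into $\int(v_0-C_v)(p-\sigma_0)$. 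The Lagrange constant then disappears because $\int(p-\sigma_0)=1-1=0$, yielding the claimed equality.

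Part (iii) follows by combining the two: by (ii), $E_v(p_n)-E_v(\sigma_0)=I(p_n,\sigma_0)$, and by (i) this equals $\sum_{k\ge1}(b_k^{(n)}-a_k)^2/(2k)\ge0$, a positive-definite form in the coefficient differences. Hence $E_v(p_n)\to E_v(\sigma_0)$ forces $b_k^{(n)}\to a_k$ for every $k$, i.e. $\int T_k\,p_n\to\int T_k\,\sigma_0$; since the $p_n$ are probability measures on the compact interval $[-1,1]$ and polynomials are dense in $C[-1,1]$, convergence of all these moments upgrades to weak convergence $p_n\to\sigma_0$ (the uniform convexity $v''\ge\gamma$ ensures the energy functional is strictly convex, so $\sigma_0$ is its unique minimiser, and in the present normalization its support is the full interval, so that (ii) genuinely applies). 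For (iv), the same potential identity gives $U^{\sigma_0}(x)=\log 2+\sum_{k\ge1}\frac{a_k}{k}T_k(x)$ on $[-1,1]$; inserting this into the equality form $2U^{\sigma_0}=C_v-v_0$ and matching Chebyshev coefficients gives $a_k/k=-\tfrac12 c_k$, where $v_0=\sum_j c_j T_j$. When $v$ is a polynomial this expansion terminates, so $c_k=0$ and hence $a_k=0$ for every $k$ exceeding $\deg v$, leaving only finitely many nonzero $a_k$.

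The orthogonality bookkeeping is routine; the two points needing care are the interchange of summation and integration in (i), which is legitimate because $f,g\in L^2(dx/\pi\sqrt{1-x^2})$ makes the coefficient sequences $(a_k),(b_k)$ square-summable and the rearranged series absolutely convergent, and, in (ii) and (iv), the standing assumption that $\sigma_0$ is absolutely continuous of the stated Chebyshev-weighted form with support exactly $[-1,1]$, so that \eqref{6.2} may be used as an equality throughout $(-1,1)$ and the support of $p$ sits inside that of $\sigma_0$. I expect this support/regularity input to be the main obstacle: establishing it for polynomial or uniformly convex external fields is exactly the one-cut fact that the statement implicitly relies on, and it is what must be invoked (rather than reproved) for the argument above to be rigorous.
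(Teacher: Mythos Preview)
Your treatment of parts (i), (ii) and (iv) matches the paper's proof in substance. For (i) the paper also passes to the angular variables $x=\cos\theta$, $y=\cos\phi$, factors $\cos\theta-\cos\phi$ and computes the Fourier coefficients of $\log|\sin(\cdot)/2|$ to reach the same diagonal sum; for (ii) the paper likewise says it follows from the equality case of \eqref{6.2}; and for (iv) the paper uses the differentiated form of your identity, namely Tricomi's formula $v'(x)=-2\sum_{k\ge1}a_kU_{k-1}(x)$, which is exactly your matching of Chebyshev coefficients after one derivative.

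The genuine divergence is in (iii). The paper does \emph{not} argue via Chebyshev moments and Stone--Weierstrass; instead it invokes a transportation inequality: uniform convexity $v''\ge\gamma$ yields a bound of the form $W_2(p_n,\sigma_0)^2\le \gamma^{-1} I(p_n,\sigma_0)$ for the quadratic Wasserstein distance, and then $I(p_n,\sigma_0)\to0$ forces $W_2(p_n,\sigma_0)\to0$, hence weak convergence. Your route is more elementary and self-contained (it uses only (i), (ii) and density of polynomials on a compact interval), and in fact barely touches the uniform convexity hypothesis except, as you note, to secure that $\mathrm{supp}\,\sigma_0=[-1,1]$ so that (ii) applies. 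The paper's route is less elementary but gives a quantitative statement (control of an actual metric by the energy defect), which your argument does not. Both are valid; yours is arguably the cleaner proof of the bare weak-convergence claim, while the paper's approach explains why the uniform convexity hypothesis is natural here.
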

\begin{proof}
({\rm \romannumeral1}) We substitute $x=\cos\theta$ and $y=\cos\phi$ and obtain
\begin{equation}\label{6.10}
\begin{split}
I(p,q)&=\frac{1}{\pi^{2}}\int_{-1}^{1}\int_{-1}^{1}\log\frac{1}{|x-y|}\left(f(x)-g(x)\right)\left(f(y)-g(y)\right)\frac{dxdy}{\sqrt{1-x^{2}}\sqrt{1-y^{2}}}\\
&=\sum_{k,l=1}^{\infty}\frac{(a_{k}-b_{k})(a_{l}-b_{l})}{4\pi^{2}}\int_{-\pi}^{\pi}\int_{-\pi}^{\pi}-\log|\cos\theta-\cos\phi|\cos k\theta\cos l\phi d\theta d\phi,
\end{split}
\end{equation}
where
\begin{equation}\label{6.11}
-\log|\cos\theta-\cos\phi|=-\log2-\log|\sin(\theta+\phi)/2|-\log|\sin(\theta-\phi)/2|,
\end{equation}
and
\begin{equation}\label{6.12}
\begin{split}
-\int_{-\pi}^{\pi}\log|\sin(\theta+\phi)/2|\cos k\theta\frac{d\theta}{2\pi}=-\int_{-\pi}^{\pi}\log|\sin\theta/2|\cos k\theta\frac{d\theta}{2\pi}\cos k\phi=\frac{\cos k\phi}{2k}
\end{split}
\end{equation}
by \cite{GradshteynRyzhik2007}. A similar identity holds for $\log|\sin(\theta-\phi)/2|$, and so by orthogonality, we obtain the stated result.

({\rm \romannumeral2}) This follows from the identity (\ref{6.2}) by a simple calculation.

({\rm \romannumeral3}) By uniform convexity, there exist $\gamma>0$ such that the Wasserstein transportation distance satisfies $W_{2}(p_{n},\sigma_{0})^{2}\leq\gamma I(p_{n},\sigma_{0})$, so $W_{2}(p_{n},\sigma_{0})\rightarrow0$ as $n\rightarrow\infty$, and $p_{n}\rightarrow\sigma_{0}$ weakly.

({\rm \romannumeral4}) By a formula of Tricomi,
\begin{equation}\label{6.13}
\begin{split}
v'(x)=&PV\int_{-1}^{1}\frac{2\sigma_{0}(y)}{x-y}dy\\
      &=\sum_{k=0}^{\infty}PV\int_{-1}^{1}\frac{2a_{k}T_{k}(y)}{\pi(x-y)\sqrt{1-y^{2}}}dy\\
      &=-2\sum_{k=1}^{\infty}a_{k}U_{k-1}(x),
\end{split}
\end{equation}
where $U_{k}(x)$ is Chebyshev's polynomial of the second kind of degree $k$. If $v$ is a polynomial, then the series has only finitely many non-zero terms.

Chen and Lawrence \cite{ChenLawrence1998} consider the effect of replacing $w_{0}(x)$ by $w_{0}(x){\rm e}^{-\lambda f(x)/n}$ or equivalently replacing $v_{0}(x)$ by $v_{0}(x)+\lambda f(x)/n$, where $f(x)$ is a bounded and continuous real function. The linear statistic has mean
\begin{equation}\label{6.14}
\begin{split}
n\int_{a}^{b}f(x)\sigma_{0}(x)dx
\end{split}
\end{equation}
and variance
\begin{equation}\label{6.15}
\frac{1}{2\pi^{2}}\int_{a}^{b}\frac{f(x)}{\sqrt{(b-x)(x-a)}}PV\int_{a}^{b}\frac{\sqrt{(b-y)(y-a)}f'(y)}{x-y}dydx.
\end{equation}
For a given $\sigma_{0}$, the possible values of the mean and variance are related, as in the following result.

By a simple scaling argument, we can replace $(a,b)$ by $(-1,1)$, and the standard deviation of $f(x)$ does not change if we add a constant to $f(x)$. Suppose therefore that $f(x)$ is an absolutely continuous real function on $(-1,1)$ such that $f(x)$ and $f'(x)$ are square integrable with respect to the Chebyshev weight $1/\pi\sqrt{1-x^{2}}$, such that
\begin{equation}\label{6.16}
\int_{-1}^{1}\frac{f(x)}{\pi\sqrt{1-x^{2}}}dx=0.
\end{equation}
For such $f(x)$, we consider the functional
\begin{equation}\label{6.17}
\Phi(f)=\frac{1}{\pi^{2}}\int_{-1}^{1}\frac{f(x)}{\sqrt{1-x^{2}}}PV\int_{-1}^{1}\frac{f'(y)\sqrt{1-y^{2}}}{x-y}dydx
\end{equation}
and aim to compute the Legendre transform of $\Phi$, as in
\begin{equation}\label{6.18}
\Phi^{*}(\sigma)=\sup_{f}\left\{\int_{-1}^{1}f(x)\sigma(x)dx-\Phi(f):\int_{-1}^{1}\frac{f(x)}{\pi\sqrt{1-x^{2}}}dx=0\right\}.
\end{equation}
The following result shows that $\Phi^{*}(\sigma)$ is a measure of the distance between $\sigma$ and the Chebyshev (arcsine) distribution on $[-1,1]$, in a metric associated with the periodic Sobolev space $H^{-1/2}$.

\end{proof}

\begin{proposition}
Let $h(x)=\pi\sigma(x)\sqrt{1-x^{2}}$, and let $g(x)=h(x)-\int_{-1}^{1}h(t)dt/\pi\sqrt{1-t^{2}}$. Then
\begin{equation}\label{6.19}
\Phi^{*}(\sigma)=\frac{1}{2\pi^{2}}\int_{-1}^{1}\int_{-1}^{1}\log\left|\frac{(1-x)(1-y)}{(x-y)^{2}}\right|\frac{g(x)g(y)}{\sqrt{1-x^{2}}\sqrt{1-y^{2}}}dxdy,
\end{equation}
or equivalently $\Phi^{*}(\sigma)=2^{-1}I(\sigma,1/\pi\sqrt{1-x^{2}})$ and equality is attained in the supremum if and only if
\begin{equation}\label{6.20}
g(x)=PV\frac{1}{\pi}\int_{-1}^{1}\frac{f'(y)\sqrt{1-y^{2}}}{x-y}dy
\end{equation}
almost everywhere.
\end{proposition}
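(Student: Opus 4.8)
The plan is to diagonalize the quadratic functional $\Phi$ in the Chebyshev basis and then carry out the Legendre maximization one frequency at a time. First I would expand the competitor as $f(x)=\sum_{k\ge1}b_kT_k(x)$, where the constant term is absent precisely because of the constraint \eqref{6.16}, since $\int_{-1}^1 T_k(x)\,\frac{dx}{\pi\sqrt{1-x^2}}=\delta_{k0}$. The key analytic input is the pair of finite Hilbert transform (airfoil/Tricomi) identities on $[-1,1]$: using $T_k'=kU_{k-1}$ together with $\frac1\pi{\rm PV}\int_{-1}^1\frac{\sqrt{1-y^2}\,U_{k-1}(y)}{x-y}\,dy=T_k(x)$ for $k\ge1$, the inner principal value integral in \eqref{6.17} collapses to $\pi\sum_{k\ge1}kb_kT_k(x)$. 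Pairing against $f/\sqrt{1-x^2}$ and invoking the orthogonality $\int_{-1}^1 T_kT_l\,\frac{dx}{\sqrt{1-x^2}}=\tfrac\pi2\delta_{kl}$ ($k,l\ge1$) then turns $\Phi$ into a diagonal form: a weighted sum of squares with weight proportional to $k$.

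Next I would treat the linear part. Writing $h=\pi\sigma\sqrt{1-x^2}=\sum_{k\ge0}a_kT_k$ and $g=h-a_0=\sum_{k\ge1}a_kT_k$ (the mean subtraction removes exactly the constant Chebyshev mode), the pairing $\int_{-1}^1 f\sigma\,dx=\frac1\pi\int_{-1}^1 \frac{fh}{\sqrt{1-x^2}}\,dx$ becomes a diagonal bilinear sum in the $a_k,b_k$, and the $a_0$ term drops out because $f$ carries no constant mode. The maximization in \eqref{6.18} then decouples across frequencies into independent concave one-variable problems. The stationarity (Euler--Lagrange) condition in each mode fixes the optimal $b_k$ proportionally to $a_k/k$, and reassembling these modes is exactly the assertion that the maximiser $f$ obeys $g(x)=\frac1\pi{\rm PV}\int_{-1}^1 \frac{f'(y)\sqrt{1-y^2}}{x-y}\,dy$, that is \eqref{6.20}. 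This is where the two Hilbert transform identities are used in tandem: one to build $\Phi$, and its inverse to characterise the maximiser.

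Finally, substituting the optimal modes back gives $\Phi^*(\sigma)$ as a diagonal sum proportional to $\sum_{k\ge1}a_k^2/k$. To recognise this as the claimed double integral I would invoke part (i) of the preceding Proposition: the kernel $-2\log|x-y|$ contributes the sum against the zero-mean density $g/\pi\sqrt{1-x^2}$, while the extra terms $\log|1-x|+\log|1-y|$ integrate to zero because $\int_{-1}^1 \frac{g(y)}{\sqrt{1-y^2}}\,dy=0$ once the constant mode has been removed. These endpoint terms are not cosmetic: they record the non-uniqueness of the inversion of the finite Hilbert transform (its one-dimensional cokernel), and their vanishing against $g$ is what makes the symmetric kernel $\log\bigl|(1-x)(1-y)/(x-y)^2\bigr|$ legitimate. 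Identifying the diagonal sum with $I(\sigma,1/\pi\sqrt{1-x^2})$ through part (i) then produces the equivalent form.

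The main obstacle I anticipate is the bookkeeping of constants and endpoint/constant-mode contributions rather than any conceptual difficulty: the finite Hilbert transform on $[-1,1]$ is not invertible without a side condition, so one must track the free constant, the arcsine-weight normalizations $\tfrac\pi2$ versus $\pi$, and the sign convention of the principal value kernel with great care, since small factors of $2$ propagate through the Legendre step into \eqref{6.19} and \eqref{6.20}. A secondary technical point is justifying the termwise integration, namely that the series for $\Phi$ and for the pairing converge and that the interchange of summation with the principal value integral is legitimate; this is cleanest in the periodic Sobolev framework, where $\Phi$ is (a multiple of) the squared $H^{1/2}$ seminorm and $\Phi^*$ the dual $H^{-1/2}$ norm, matching the $1/(2k)$ weights appearing in part (i).
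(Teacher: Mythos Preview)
Your proposal is correct and follows essentially the same route as the paper: expand $f$ and $h$ in Chebyshev polynomials, use the Tricomi identity $T_k'=kU_{k-1}$ together with the finite Hilbert transform to diagonalize $\Phi(f)$ as $\sum_k k\,(\text{coeff})^2/2$, optimize mode by mode to obtain $\Phi^*(\sigma)=\sum_k a_k^2/(2k)$ with maximiser characterised by \eqref{6.20}, and then identify the series with the double integral. The only cosmetic difference is that your letters $a_k,b_k$ are swapped relative to the paper's, and for the final identification with \eqref{6.19} the paper carries out an explicit computation via the auxiliary transform $\tilde g(x)=\mathrm{PV}\frac{1}{\pi}\int_{-1}^1\frac{g(y)\,dy}{(y-x)\sqrt{1-y^2}}$ and its antiderivative rather than appealing to part~(i) of the preceding Proposition as you do, but the content is the same.
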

\begin{proof}
We expand $f(x)$ and $h(x)$ in terms of Chebyshev polynomials of the first kind, so
\begin{equation}\label{6.21}
f(x)=\sum_{j=1}^{\infty}a_{j}T_{j}(x),~~~~~~~~~~h(x)=\sum_{j=0}^{\infty}b_{j}T_{j}(x).
\end{equation}
Then $f'(x)=\sum_{j=1}^{\infty}j a_{j} U_{j-1}(x)$ where $U_{j}(x)$ are the Chebyshev polynomials of the second kind as in \cite{GradshteynRyzhik2007}, and by a formula of Tricomi
\begin{equation}\label{6.22}
PV\frac{1}{\pi}\int_{-1}^{1}\frac{f'(y)\sqrt{1-y^{2}}}{x-y}dy=\sum_{j=1}^{\infty}ja_{j}T_{j}(x)
\end{equation}
by \cite{GradshteynRyzhik2007}. Then
\begin{equation}\label{6.23}
\frac{1}{\pi^{2}}\int_{-1}^{1}\frac{f(x)}{\sqrt{1-x^{2}}}PV\int_{-1}^{1}\frac{f'(y)\sqrt{1-y^{2}}}{x-y}dydx=\sum_{j=1}^{\infty}ja_{j}\int_{-1}^{1}\frac{f(x)T_{j}(x)}
{\pi\sqrt{1-x^{2}}}dx=\sum_{j=1}^{\infty}\frac{ja_{j}^{2}}{2};
\end{equation}
hence $\Phi (f)=\sum_{j=1}^\infty ja_j^2/2$. We deduce that
\begin{equation}\label{6.24}
\Phi^* (h)=\sup\Bigl\{ \sum_{j=1}^{\infty}b_{j}a_{j}-\sum_{j=1}^{\infty}\frac{ja_{j}^{2}}{2}\Bigr\}\leq\sum_{j=1}^{\infty}\frac{b_{j}^{2}}{2j}
\end{equation}
with equality attained if and only if $b_{j}=ja_{j}$ for all $j=1,2,\ldots$. Hence $\Phi^{*}(h)=\sum_{j=1}^{\infty}b_{j}^{2}/2j$, which we can compare with the formula (\ref{6.19}). We now identify this series with a double integral. We can write $g(x)=\sum_{k=1}^{\infty}b_{k}T_{k}(x)$; then by another formula of Tricomi \cite{GradshteynRyzhik2007}, the transform
\begin{equation}\label{6.25}
\tilde{g}(x)=PV\frac{1}{\pi}\int_{-1}^{1}\frac{g(y)dy}{(y-x)\sqrt{1-y^{2}}}
\end{equation}
satisfies
\begin{equation}\label{6.26}
\tilde{g}(x)=\sum_{j=1}^{\infty}b_{j}U_{j-1}(x);
\end{equation}
and taking the integral of the series, we obtain
\begin{equation}\label{6.27}
-\int_{x}^{1}\tilde{g}(t)dt=\sum_{j=1}^{\infty}\frac{b_{j}}{j}(T_{j}(x)-T_{j}(1)),
\end{equation}
in which $T_{j}(1)=1$ for all $j=1,2,\ldots$ by \cite{GradshteynRyzhik2007}. Then
\begin{equation}\label{6.28}
\frac{-1}{\pi}\int_{-1}^{1}g(x)\int_{x}^{1}\tilde{g}(t)dt\frac{dx}{\pi\sqrt{1-x^{2}}}=\sum_{j=1}^{\infty}\frac{b_{j}^{2}}{2j}.
\end{equation}
We can also write
\begin{equation}\label{6.29}
-\int_{x}^{1}\tilde{g}(t)dt=\int_{-1}^{1}\log\left|\frac{1-y}{x-y}\right|\frac{g(y)dy}{\pi\sqrt{1-y^{2}}},
\end{equation}
hence by symmetrizing the variables, we have
\begin{equation}\label{6.30}
\frac{-1}{\pi}\int_{-1}^{1}g(x)\int_{x}^{1}\tilde{g}(t)dt\frac{dx}{\pi\sqrt{1-x^{2}}}=\frac{1}{2\pi^{2}}\int_{-1}^{1}\int_{-1}^{1}\log\left|\frac{(1-x)(1-y)}
{(x-y)^{2}}\right|\frac{g(x)g(y)}{\sqrt{1-x^{2}}\sqrt{1-y^{2}}}dxdy.
\end{equation}
This identifies $\Phi^{*}(\sigma)$ with the double integral. Also, the supremum is attained if and only if $f(x)$ and $g(x)$ have $b_{j}=ja_{j}$ for $j=1,2,\ldots$, so the above integral equation holds almost everywhere.
\end{proof}
\begin{example}
Starting with the classical Jacobi weight on $[-1,1]$, we can introduce a limiting density, which lives on a proper subinterval $(A_{n},B_{n})$. As in (\ref{4.14}), let $\rho$ be the limiting density
\begin{equation*}
\rho(y)=\frac{1}{\pi}\frac{n+(\alpha+\beta)/2}{1-y^{2}}\sqrt{(B_{n}-y)(y-A_{n})}~~~~~~(y\in (A_{n},B_{n})).
\end{equation*}
We suppose that $\alpha=\beta$, so $A_{n}=-B_{n}=-K$, and then we rescale $[-1,1]$ to $[-1/K,1/K]$, and $[-K,K]$ to $[-1,1]$, to obtain the probability density function
\begin{equation*}
\sigma(x)=\kappa\frac{\sqrt{1-x^{2}}}{1-K^{2}x^{2}}~~~~~~~(x\in[-1,1])
\end{equation*}
where $\kappa>0$ and $0<K<1$ are constants. In view of the Proposition, $\Phi^{*}(\sigma)$ is a measure of the distance between $\sigma$ and the Chebyshev (arcsine) distribution on $[-1,1]$; for $K=1$, we indeed have the arcsine distribution, whereas for $K=0$, we have the semicircular law. We compute
\begin{equation*}
h(x)=\pi\sqrt{1-x^{2}}\sigma(x)=\frac{\kappa\pi(1-x^{2})}{1-K^{2}x^{2}},
\end{equation*}
and then introduce the Chebyshev coefficients of $h$. We have
\begin{equation*}
a_{n}=\int_{-1}^{1}\frac{h(x)T_{n}(x)dx}{\pi\sqrt{1-x^{2}}}=\frac{\pi\kappa}{2\pi}\int_{0}^{2\pi}\left[\frac{1}{K^{2}}+\frac{K^{2}-1}{2K^{2}}\left(\frac{1}
{1-K\cos\theta}+\frac{1}{1+K\cos\theta}\right)\right]{\rm e}^{{\rm i}n\theta}d\theta.
\end{equation*}
We can replace this by a contour integral around the unit circle, so by an elementary calculus of residues, we obtain
\begin{equation*}
a_{n}=-\pi\kappa\frac{\sqrt{1-K^{2}}}{K^{n+2}}\left(-1+\sqrt{1-K^{2}}\right)^{n} ~~~~~~~(n=1,2\ldots),
\end{equation*}
and
\begin{equation*}
a_{0}=\pi\kappa\frac{1-\sqrt{1-K^{2}}}{K^{2}},
\end{equation*}
where $\pi\kappa=K^{2}/\left(1-\sqrt{1-K^{2}}\right)$ since $\sigma$ is a probability density function. Hence
\begin{equation*}
\Phi^{*}(\sigma)=\sum_{n=1}^{\infty}\frac{a_{n}^{2}}{2n}=\frac{-\pi^{2}\kappa^{2}(1-K^{2})}{2K^{4}}\log\left[1-\left(\frac{-1+\sqrt{1-K^{2}}}
{K}\right)^{2}\right].
\end{equation*}
When $\alpha=\beta=\lambda-1/2$, the corresponding system of orthogonal polynomials is given by the Gegenbauer (ultraspherical) polynomials $(G_{n}^{\lambda})_{n=0}^{\infty}$ which satisfy, for $L_{\lambda}f=-\left(1-x^{2}\right)f''(x)+(2\lambda+1)xf'(x)$, the eigenfunction equation
\begin{equation*}
L_{\lambda}G_{n}^{\lambda}=n(n+2\lambda)G_{n}^{\lambda}.
\end{equation*}
\end{example}
We conclude this section with a result concerning fluctuations. Suppose that $v_0$ is uniformly convex, so that $v_0''(x)\geq \gamma $ for all $x$ and some $\gamma>0$. Let $V(X)={\rm trace}~v_0(X)$, and $\hat \mu_n(dX)=Z^{-1}{\rm e}^{-nV(X)}dX$; note that we use a different scaling from equation (\ref{7.3}).
Let $f: \mathbf{R}\rightarrow\mathbf{R}$ be a compactly supported smooth function, and introduce the linear statistic $F: M_{n}^{h}(\mathbf{C})\rightarrow\mathbf{R}$ associated with $f$ by $F(X)={\rm trace}~f(X)$. The fluctuations of $F$ are
\begin{equation}\label{6.3}
FL(X)=F(X)-\int_{M_{n}^{h}(\mathbf{C})} F(Y)\hat\mu_{n}(dY)~~~~~~~~~(X\in M_{n}^{h}(\mathbf{C})).
\end{equation}
\begin{proposition} Then
\begin{equation}\lim\sup_{n\rightarrow\infty}\int_{M_{n}^{h}(\mathbf{C})} \left(FL(X)\right)^{2}\hat\mu_{n}(dX)\leq\frac{1}{\gamma}\int_{a}^{b}f'(x)^{2}\sigma_{0}(x)dx \end{equation}
and
\begin{equation}\int_{M_{n}^{h}(\mathbf{C})}  \exp \bigl( tFL(X))\hat\mu_n(dX)\leq \exp\Bigl( {\frac{t^2}{2\Vert f'\Vert_\infty}}\Bigr)\qquad (t\in {\mathbf{R}}).
\end{equation}
\end{proposition}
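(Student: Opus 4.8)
The plan is to derive both estimates from a single structural fact: with the $n$-dependent scaling, the potential $nV$ is uniformly convex on $M_n^h(\mathbf{C})$ with convexity constant $n\gamma$, after which part (i) follows from the Brascamp--Lieb inequality and part (ii) from the Bakry--\'Emery criterion combined with the Herbst argument. First I would record the Hessian bound. Writing $\Phi=nV$ with $V={\rm trace}\,v_0(X)$, the Rayleigh--Ritz computation (\ref{4.2}) used in the log-concavity proposition gives, for $Y\in M_n^h(\mathbf{C})$ with eigenbasis $\{\xi_j\}$ of $X$,
$$\bigl\langle {\rm Hess}\,\Phi, Y\otimes Y\bigr\rangle = n\sum_{j=1}^n v_0''(x_j)\langle Y\xi_j,\xi_j\rangle^2 + n\sum_{j\neq k}\frac{v_0'(x_j)-v_0'(x_k)}{x_j-x_k}\langle Y\xi_j,\xi_k\rangle^2.$$
The diagonal coefficients satisfy $v_0''(x_j)\geq\gamma$ by hypothesis, and each difference quotient equals $v_0''(\zeta)\geq\gamma$ for some intermediate $\zeta$ by the mean value theorem and convexity; hence every coefficient is at least $\gamma$ and
$$\bigl\langle {\rm Hess}\,\Phi, Y\otimes Y\bigr\rangle \geq n\gamma\sum_{j,k}\langle Y\xi_j,\xi_k\rangle^2 = n\gamma\,\|Y\|_{\rm HS}^2,$$
so ${\rm Hess}(nV)\geq n\gamma\,I$ on the full-measure set where the eigenvalues are distinct.

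For part (i) I would apply the Brascamp--Lieb inequality to $\hat\mu_n=Z^{-1}{\rm e}^{-nV}dX$: since ${\rm Hess}(nV)\geq n\gamma I$, the variance of any smooth $G$ obeys ${\rm Var}_{\hat\mu_n}(G)\leq (n\gamma)^{-1}\int\|\nabla G\|^2\,\hat\mu_n(dX)$. Taking $G=F$ and using $\nabla F(X)=f'(X)$, so that $\|\nabla F\|_{\rm HS}^2={\rm trace}\,f'(X)^2=\sum_j f'(x_j)^2$, gives
$$\int (FL(X))^2\,\hat\mu_n(dX) = {\rm Var}_{\hat\mu_n}(F) \leq \frac{1}{n\gamma}\int\sum_{j=1}^n f'(x_j)^2\,\hat\mu_n(dX).$$
It then remains to let $n\to\infty$: the empirical eigenvalue measure $\frac1n\sum_j\delta_{x_j}$ converges, in the mean-field sense for this $\beta=2$ ensemble, to the equilibrium density $\sigma_0$ supported on $[a,b]$, and since $f'$ is continuous and compactly supported, $\frac1n\int\sum_j f'(x_j)^2\,\hat\mu_n(dX)\to\int_a^b f'(x)^2\sigma_0(x)\,dx$, which yields the claimed $\limsup$ bound.

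For part (ii) the same Hessian bound places us under the Bakry--\'Emery criterion, so $\hat\mu_n$ satisfies a logarithmic Sobolev inequality with constant proportional to $(n\gamma)^{-1}$. The linear statistic is Lipschitz with $\|\nabla F\|_{\rm HS}\leq\sqrt n\,\|f'\|_\infty$, and the Herbst argument converts the log-Sobolev inequality into a sub-Gaussian estimate $\int{\rm e}^{t\,FL(X)}\hat\mu_n(dX)\leq{\rm e}^{Ct^2}$, in which the $\sqrt n$ in the Lipschitz constant cancels the $1/n$ in the log-Sobolev constant, leaving $C$ depending only on $\|f'\|_\infty$ and $\gamma$; tracking these constants produces the displayed inequality.

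The main obstacle I anticipate is not the functional inequalities themselves but their rigorous application. The potential $V$ is smooth only off the coincidence locus $\{x_j=x_k\}$, so I would first verify that this locus is $\hat\mu_n$-null and approximate $v_0$ by smooth, uniformly convex potentials in order to justify both Brascamp--Lieb and Bakry--\'Emery on $M_n^h(\mathbf{C})$. Separately, the passage to the limit in part (i) rests on the concentration of the empirical spectral measure at $\sigma_0$, which is the quantitative step that most needs care: I would obtain it from the large-deviation rate function for the ensemble, using that $f'$ is bounded and continuous to control the expectation $\frac1n\,\mathbb{E}_{\hat\mu_n}[{\rm trace}\,f'(X)^2]$.
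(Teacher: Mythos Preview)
Your proposal is correct and follows essentially the same route as the paper: establish the Hessian bound ${\rm Hess}(nV)\geq n\gamma I$ via the Rayleigh--Ritz formula, deduce a variance inequality with constant $(n\gamma)^{-1}$ applied to $F$, and pass to the limit using convergence of $n^{-1}\mathbb{E}_{\hat\mu_n}[{\rm trace}\,f'(X)^2]$ to $\int_a^b f'(x)^2\sigma_0(x)\,dx$. The only cosmetic differences are that the paper obtains the variance bound by linearizing the Bakry--\'Emery log-Sobolev inequality (applying it to $G=1+tFL$ for small $t$) rather than invoking Brascamp--Lieb directly, and for the concentration inequality it simply cites \cite{B1} instead of spelling out the Herbst argument you sketch.
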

\begin{proof} By the Rayleigh--Ritz formula (\ref{4.2}), we have
\begin{equation}\label{6.4}
\langle {\rm Hess~V, Y\otimes Y}\rangle\geq\gamma\langle Y,Y\rangle~~~~~~(Y\in M_{n}^{h}(\mathbf{C})),
\end{equation}
so $V: M_{n}^{h}(\mathbf{C})\rightarrow\mathbf{R}$ is uniformly convex. See \cite{B1}. By the Bakry--Emery criterion, $\hat\mu_{n}$ satisfies a logarithmic Sobolev inequality in the form
\begin{align}\label{6.5}
\int_{M_{n}^{h}(\mathbf{C})}  G(X)^{2}\log G(x)^{2}\hat\mu_{n}(dX)&\leq\int_{M_{n}^{h}(\mathbf{C})}  G(X)^{2}\hat\mu_{n}(dX)\log\int_{M_{n}^{h}(\mathbf{C})}  G(X)^{2}\hat\mu_{n}(dX)\nonumber\\
&+\frac{2}{n\gamma}\int_{M_{n}^{h}(\mathbf{C})} \Vert\nabla G(X)\Vert^{2}\hat\mu_{n}(dX).
\end{align}
By applying this inequality to $G(X)=1+tFL(X)$ with small real $t$, we deduce that
\begin{equation}\label{6.6}
\int_{M_{n}^{h}(\mathbf{C})} \left(FL(X)\right)^{2}\hat\mu_{n}(dX)\leq\frac{1}{n\alpha}\int_{M_{n}^{h}(\mathbf{C})} \Vert \nabla F(X) \Vert^{2}\hat\mu_{n}(dX).
\end{equation}
The right-hand side converges as $n\rightarrow\infty$, so
\begin{equation}\label{6.7}
\frac{1}{n\alpha}\int_{M_{n}^{h}(\mathbf{C})} \left({\rm trace}f'(X)\right)^{2}\hat\mu_{n}(dX)\rightarrow\frac{1}{\gamma}\int_{a}^{b}f'(x)^{2}\sigma_{0}(x)dx .
\end{equation}
Finally, we use (6.6) from \cite{B1} to obtain the stated concentration inequality.
\end{proof}

 \section{Appendix A: On $\mathbb{P}(c,\alpha,\beta, n)$ for finite $n$.}
Note that the PDF of the center of mass of the unitary Jacobi ensemble is
\begin{equation*}
\mathbb{P}(c,\alpha,\beta,n)=\frac{1}{2\pi D_n(0,\alpha,\beta)}\int_{-\infty}^\infty d\lambda\; \mathrm{e}^{-{\rm i} c\lambda}\frac{1}{n!}\int_{[0,1]^n}\Delta_n(\vec{x}){}^2 \prod_{l=1}^n x_l^\alpha (1-x_l)^\beta\mathrm{e}^{{\rm i} x_l\lambda} dx_l,
\end{equation*}
The Paley-Wiener theorem reads,
\\ \\
\noindent{\bf Theorem A.}(\cite{SW}, p.108) Suppose $F\in L^{2}(-\infty,\infty)$. Then $F(\xi)$, the Fourier transform of the function $f(x)$, vanishing outside $[-(\sigma/2\pi),\sigma/2\pi]=:[-\tau,\tau]$, i.e. $F(\xi):=\int_{-\infty}^{\infty}f(x){\rm e}^{-2\pi {\rm i}x\xi}dx=\int_{-\tau}^{\tau}f(x){\rm e}^{-2\pi {\rm i}x\xi}dx$, $x\in\mathbb{R}$ if and only if $F$ is an entire function of exponential type $\sigma$, $|F(\xi)|\leq A{\rm e}^{\sigma|\xi|}$, $\xi\in \mathbb{C}$, $\sigma>0$ and $A$ is a constant.
\\\\
\noindent Based on the above theorem, we have
\begin{lemma}\label{Lemma}
The Fourier transform of our $D_{n}(-{\rm i\lambda}, \alpha,\beta, n)$ given in (\ref{zhu}), denoted by $\mathbb{P}(c, \alpha,\beta, n)$, is supported in the interval $[0, n]$.
\end{lemma}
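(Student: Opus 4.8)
The plan is to read the defining formula (\ref{zhu}) as a Fourier inversion and then invoke the Paley--Wiener theorem (Theorem~A). First I would record the analytic input: writing, by the Andreief--Heine identity,
\[
D_n(-{\rm i}\lambda,\alpha,\beta)=\frac{1}{n!}\int_{[0,1]^n}\Delta_n(\vec{x})^2\prod_{l=1}^n x_l^\alpha(1-x_l)^\beta\,{\rm e}^{{\rm i}\lambda x_l}\,dx_l,
\]
the moments $\mu_j(-{\rm i}\lambda)=\int_0^1 x^{j+\alpha}(1-x)^\beta{\rm e}^{{\rm i}\lambda x}dx$ are entire in $\lambda$ (differentiate under the integral sign, justified by dominated convergence on compact sets), and since $D_n(-{\rm i}\lambda,\alpha,\beta)$ is a polynomial in these moments it extends to an entire function of $\lambda$. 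Thus $G(\lambda):=D_n(-{\rm i}\lambda,\alpha,\beta)/D_n(0,\alpha,\beta)$ is entire, and $\mathbb{P}(c,\alpha,\beta,n)$ is by construction its inverse Fourier transform.

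The key estimate exploits that $x_l\in[0,1]$ forces $\sum_{l=1}^n x_l\in[0,n]$. Since $|{\rm e}^{{\rm i}\lambda x_l}|={\rm e}^{-({\rm Im}\,\lambda)x_l}$, the product obeys $\bigl|{\rm e}^{{\rm i}\lambda\sum_l x_l}\bigr|={\rm e}^{-({\rm Im}\,\lambda)\sum_l x_l}$, and bounding the exponent with $0\le\sum_l x_l\le n$ gives the one-sided estimates $|G(\lambda)|\le 1$ for ${\rm Im}\,\lambda\ge 0$ and $|G(\lambda)|\le {\rm e}^{n|{\rm Im}\,\lambda|}$ for ${\rm Im}\,\lambda<0$; in particular $|G(\lambda)|\le {\rm e}^{n|\lambda|}$, so $G$ is of exponential type at most $n$. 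The asymmetry of these two bounds is exactly the support function of the interval $[0,n]$, which is what ultimately pins the support on the correct (non-symmetric) interval rather than a symmetric one.

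Because Theorem~A is stated for the symmetric interval $[-\tau,\tau]$, I would center the picture by a shift: the function $\Psi(\lambda):={\rm e}^{{\rm i}n\lambda/2}G(-\lambda)$ is the Fourier transform of the translate $\mathbb{P}(c+n/2,\alpha,\beta,n)$, and the estimate above yields $|\Psi(\lambda)|\le C\,{\rm e}^{(n/2)|\lambda|}$, i.e.\ exponential type $n/2$. Applying Theorem~A (after checking $\Psi\in L^2(\mathbf{R})$, which follows from Plancherel once one knows the density is bounded, or from Riemann--Lebesgue decay of $G$ on the real axis) shows the translate is supported in $[-n/2,n/2]$, hence $\mathbb{P}(c,\alpha,\beta,n)$ is supported in $[0,n]$.

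An equivalent and more self-contained route, which I would present as the mechanism behind the theorem, is direct contour displacement in (\ref{zhu}): for $c>n$ shift $\lambda\mapsto\lambda-{\rm i}T$ and estimate $|{\rm e}^{-{\rm i}c\lambda}D_n(-{\rm i}\lambda,\alpha,\beta)|\le D_n(0,\alpha,\beta)\,{\rm e}^{-(c-n)T}\to0$ as $T\to\infty$, while for $c<0$ shift $\lambda\mapsto\lambda+{\rm i}T$ and use $\sum_l x_l\ge 0$ to get decay ${\rm e}^{cT}\to0$. The main obstacle is precisely the Fourier-analytic bookkeeping in this last step: one must ensure enough decay of $G$ in the real direction, so that the vertical segments of the shifted contour contribute nothing and $G\in L^2$, which rests on the smoothness and edge behaviour of the push-forward density at $c=0$ and $c=n$; the exponential-type bound itself is elementary.
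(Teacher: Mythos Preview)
Your proposal is correct and follows essentially the same route as the paper: both center the entire function by the phase factor $e^{in\lambda/2}$, use $0\le\sum_l x_l\le n$ to obtain an exponential-type bound of order $n/2$, and then invoke Paley--Wiener to conclude support in $[-n/2,n/2]$ before shifting back to $[0,n]$. The paper carries this out in slightly greater generality (arbitrary $|\Delta_n|^{\nu}$ and positive weight $w$) and appeals to the distributional form of Paley--Wiener, thereby sidestepping the $L^2$ verification you flag; your additional contour-shift paragraph is a nice complement but not needed.
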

\begin{proof}
Consider a general case
$$D(z, [w], n): ={{1}\over{n!}}\int_{[0,1]^n} |\Delta_n(\vec{x})|^{\nu} \prod_{k=1}^n w(x_k) {\rm e}^{-{\rm i} x_k z} dx_k.$$
where $\nu>0$, and $w(x)$ is any smooth positive function integrable over $(0,1)$.
\noindent
Then
$${\rm e}^{{\rm i} nz/2}D(z, [w], n)={{1}\over{n!}}\int_{[0,1]^n} |\Delta_n(\vec{x})|^{\nu}
\exp\bigl( -{\rm i}z\sum_{k=1}^n (x_k-1/2)\bigr)\prod_{k=1}^n w(x_k)  dx_k$$
\noindent
where $-n/2\leq \sum_{k=1}^n (x_k-1/2)\leq n/2$, so ${\rm e}^{{\rm i} nz/2}D(z, [w], n)$ is entire, and there exists $C$ such that
$$\vert {\rm e}^{{\rm i} nz/2}D(z, [w], n)\vert\leq C{\rm e}^{n\vert y\vert /2}\qquad (z=x+{\rm i} y\in { \mathbb{C}}).$$
\noindent
Hence by the Paley--Wiener theorem from Stein and Weiss \cite{SW}, there exists a distribution $\varphi$ on $[-n/2,n/2]$ such that
$${\rm e}^{{\rm i}nz/2}D(z, [w], n)=\int_{[-n/2,n/2]} {\rm e}^{-{\rm i} zt}\varphi (t)\, dt,$$
\noindent so
$$D(z,[w], n)=\int_{[0,n]} {\rm e}^{-{\rm i}zs}\varphi (s-n/2)\, ds$$
\noindent where $\varphi (s-n/2)$ is a distribution supported on $[0,n]$.
\\\\
\noindent
So for our problem, $w(x)=x^{\alpha}(1-x)^{\beta},~ x\in(0,1)$, and $\nu=2$ follows.
\end{proof}

\noindent There follow formulas for $\mathbb{P}(c,\alpha ,\beta ,n)$ with $n=2,\dots ,5$ and three cases of  $\alpha$ and $\beta$.\\
\\
Case \uppercase\expandafter{\romannumeral1}\::\:\: $\alpha=0$, $\beta=0$
 \begin{align*}
\mathbb{P}(c,0,0,2)&=
\begin{cases}
2 c^3, ~~~~~~~~~~~~~~~~~ 0\leq c\leq 1, \\
2(2-c)^3,\ \
~~~~~~~ 1< c\leq 2;
\end{cases}\\
\mathbb{P}(c,0,0,3)&=
\begin{cases}
\frac{3c^8}{14}, ~~~~~~~~~~~~~~~~~~~~~~~~~~~~~~~~~~~~~~~~~~~~~~~~~~~~~~~~~~~ 0\leq c\leq 1,\\
\frac{3}{14}(-2c^8+24c^7-252c^6+1512c^5-4830c^4\\+8568c^3-8484c^2
+4392c-927),\ \
~~~~~~~~~~~~~~~~~ 1< c\leq2, \\
\frac{3(3-c)^8}{14},\ \
~~~~~~~~~~~~~~~~~~~~~~~~~~~~~~~~~~~~~~~~~~~~~~~~~~~~ 2< c\leq 3;
\end{cases}
\end{align*}
\begin{align*}
\mathbb{P}(c,0,0,4)&=
\begin{cases}
\frac{2c^{15}}{3003}, ~~~~~~~~~~~~~~ 0\leq c\leq 1,\\
\frac{2}{3003}\xi_{42}(c),~~~~~~~ 1< c\leq2, \\
\frac{2}{3003}\xi_{43}(c),~~~~~~~ 2< c\leq 3,\\
\frac{2(4-c)^{15}}{3003},~~~~~~~~~~3< c\leq4.
\end{cases}
\mathbb{P}(c,0,0,5)&=
\begin{cases}
\frac{5 c^{24}}{140229804}, ~~~~~~~~~~~~~~~ 0\leq c\leq 1,\\
\frac{5}{140229804}\xi_{52}(c),~~~~~~~~1< c\leq 2, \\
\frac{5}{140229804}\xi_{53}(c),~~~~~~~~2< c\leq3,\\
\frac{5}{140229804}\xi_{54}(c),~~~~~~~~3< c\leq4,\\
\frac{5(5-c)^{24}}{140229804},~~~~~~~~~~~~~~~4< c\leq5.
\end{cases}
\end{align*}
where
\begin{align*}
  \xi_{42}(c) =&-3 c^{15}+60 c^{14}-1680 c^{13}+29120 c^{12}-294840 c^{11}+1873872 c^{10}-7927920 c^9\\&+23268960 c^8-48674340 c^7+73653580 c^6-80912832 c^5+63969360 c^4\\&-35497280 c^3+13131720 c^2-2910240 c+292464, \\ \\
   \xi_{43}(c) =&3 c^{15}-120 c^{14}+3360 c^{13}-58240 c^{12}+644280 c^{11}-4948944 c^{10}+28428400 c^9\\&-128700000 c^8+470398500 c^7-1381480100 c^6+3179336160 c^5-5531176560 c^4\\&+6950332480 c^3-5910494520 c^2+3031004640 c-705916304,\\
      \xi_{52}(c) =&-4 c^{24}+120 c^{23}-6900 c^{22}+253000 c^{21}-5578650 c^{20}+79695000 c^{19}-785367660 c^{18}\\&+5598232200 c^{17}-29915282925 c^{16}+123134189200 c^{15}-398517412920 c^{14}\\&+1029946456560 c^{13}-2149736416100 c^{12}+3651921075600 c^{11}-5072249298600 c^{10}\\&+5768661885360 c^9-5363308269495 c^8+4055447662200 c^7-2470634081300 c^6\\&+1194550480200 c^5-447845361810 c^4+125530048600 c^3\\&-24758793900 c^2+3065085000 c-179192775,   \\ \\
   \xi_{53}(c) =&6 c^{24}-360 c^{23}+20700 c^{22}-759000 c^{21}+17798550 c^{20}-292215000 c^{19}+3673797820 c^{18}\\&-38235839400 c^{17}+347123925225 c^{16}-2790376974000 c^{15}+19589544660840 c^{14}\\&-117507788504400 c^{13}+592028782736300 c^{12}-2479096272534000 c^{11}\\&+8573537591434200 c^{10}-24367026171730000 c^9+56603181050415945 c^8\\&-106665764409131400 c^7+161304132700472300 c^6-192656070655587000 c^5\\&+177464649282553710 c^4-121528934511474600 c^3+58223870087874900 c^2\\&-17407730744067000 c+2443806916000825,
   \end{align*}
\begin{align*}
   \xi_{54}(c) =&-4 c^{24}+360 c^{23}-20700 c^{22}+759000 c^{21}-18861150 c^{20}+345345000 c^{19}-4991492660 c^{18}\\&+59676982200 c^{17}-604502001675 c^{16}+5220961534800 c^{15}-38343917872920 c^{14}\\&+238359873297840 c^{13}-1250073382257700 c^{12}+5522495132708400 c^{11}\\&-20539021982760600 c^{10}+64263112978594640 c^9-168820549421134545 c^8\\&+370693368908418600 c^7-674525363862958300 c^6+1002229415508043800 c^5\\&-1187187920423969310 c^4+1078975874367012600 c^3-706068990841773900 c^2\\&+295689680026989000 c-59394510856327775.
\end{align*}
Case \uppercase\expandafter{\romannumeral2}\::\:\: $\alpha=1$, $\beta=1$
 \begin{align*}
\mathbb{P}(c,1,1,2)&=
\begin{cases}
\frac{12}{7} c^5 \left(c^2-7 c+7\right), ~~~~~~~~~~~~~~~~~~~~~~~~~~~~~~~~~~~~~~~~~~~~ 0\leq c\leq 1, \\
\frac{12}{7} (2-c)^5 (c^2 +3c-3),\ \
~~~~~~~~~~~~~~~~~~~~~~~~~~~~~~~~~~~ 1< c\leq 2;
\end{cases}\\
\mathbb{P}(c,1,1,3)&=
\begin{cases}
\frac{10}{1001}c^{11} \left(-c^3+21 c^2-91 c+91\right), ~~~~~~~~~~~~~~~~~~~~~~~~~~ 0\leq c\leq1,\\
\frac{10}{1001}\zeta^1_{32},\ \
~~~~~~~~~~~~~~~~~~~~~~~~~~~~~~~~~~~~~~~~~~~~~~~~~~~~~~~~ 1< c\leq 2, \\
\frac{10}{1001}(c-3)^{11} \left(-c^3-12 c^2+8 c+20\right),\ \
~~~~~~~~~~~~~~~~~~ 2< c\leq 3;
\end{cases}\\
\mathbb{P}(c,1,1,4)&=\begin{cases}
\frac{10}{11685817}c^{19} \left(c^4-46 c^3+506 c^2-1771 c+1771\right), ~~~~~~~ 0\leq c\leq 1,\\
\frac{10}{11685817}\zeta^1_{42},\ \
~~~~~~~~~~~~~~~~~~~~~~~~~~~~~~~~~~~~~~~~~~~~~~~~~~~~ 1< c\leq 2, \\
\frac{10}{11685817}\zeta^1_{43},\ \
~~~~~~~~~~~~~~~~~~~~~~~~~~~~~~~~~~~~~~~~~~~~~~~~~~~~ 2< c\leq 3;\\
\frac{10}{11685817}(c-4)^{19} \left(c^4+30 c^3+50 c^2-325 c+95\right),\ \
~~~~ 3< c\leq 4;
\end{cases}
\end{align*}
where
\begin{align*}
\zeta^1_{32}=&2 c^{14}-42 c^{13}+182 c^{12}+1638 c^{11}-30030 c^{10}+234234 c^9-1135134 c^8+3683394 c^7\\&-8237229 c^6+12837825 c^5-13900887 c^4+10248147 c^3\\&-4905992 c^2+1375332 c-171420 \\
\zeta^1_{42}=&-3 c^{23}+138 c^{22}-1518 c^{21}-12397 c^{20}+703087 c^{19}-13863388 c^{18}+176051568 c^{17}\\&
-1584694848 c^{16}+10532925348 c^{15}-53064396088 c^{14}+206513065528 c^{13}\\
&-629711399408 c^{12}+1520203490988 c^{11}-2926140998088 c^{10}+4508152194128 c^9\\&
-5562236749476 c^8+5478760790976 c^7-4275619068336 c^6+2608364956736 c^5\\&-1217086784606 c^4
+419360473840 c^3-100537883820 c^2+14974716720 c-1043516620\\
\zeta^1_{43}=&3 c^{23}-138 c^{22}+1518 c^{21}+30107 c^{20}-1411487 c^{19}+27726776 c^{18}-352103136 c^{17}\\&
+3169389696 c^{16}-20825596836 c^{15}+98921176376 c^{14}-309607140920 c^{13}\\&
+319577156080 c^{12}+2998895483220 c^{11}-23166032264760 c^{10}+98489523542960 c^9\\&
-300901360559844 c^8+702625814387904 c^7-1274022686388144 c^6+1787284754851904 c^5\\&
-1906147044797534 c^4+1494846453598960 c^3-812841418001580 c^2\\&+273722184690480 c-42989308742860
\end{align*}
Case \uppercase\expandafter{\romannumeral3}\::\:\: $\alpha=1$, $\beta=2$
 \begin{align*}
\mathbb{P}(c,1,2,2)&=
\begin{cases}
\frac{10c^5 \left(c^4-12 c^3+54 c^2-84 c+42\right)}{7}, ~~~~~~~~~~~~~~~~~~~~~~~~~~~~~~~~~~ 0\leq c\leq 1, \\
\frac{10(2-c)^7 \left(c^2+2 c-2\right)}{7},\ \
~~~~~~~~~~~~~~~~~~~~~~~~~~~~~~~~~~~~~~~~~ 1< c\leq 2;
\end{cases}\\
\mathbb{P}(c,1,2,3)&=
\begin{cases}
\frac{5c^{11} \left(c^6-34 c^5+476 c^4-2992 c^3+9044 c^2-12376 c+6188\right)}{2431}, ~~~~~~~~~~~ 0\leq c\leq 1,\\
\frac{5\zeta^2_{32}}{2431},\ \
~~~~~~~~~~~~~~~~~~~~~~~~~~~~~~~~~~~~~~~~~~~~~~~~~~~~~~~~ 1< c\leq 2, \\
\frac{5(c-3)^{14} \left(c^3+8 c^2-7 c-10\right)}{2431},~~~~~~~~~~~~~~~~~~~~~~~~~~~~~~~~~~~~~ 2< c\leq 3;
\end{cases}\\
\mathbb{P}(c,1,2,4)&=\begin{cases}
\frac{14c^{19} \left(c^8-72 c^7+2223 c^6-33930 c^5+280800 c^4-1291680 c^3+3256110 c^2-4144140 c+2072070\right)}{455746863}, ~~~ 0\leq c\leq1,\\
\frac{14}{455746863}\zeta^2_{42},\ \
~~~~~~~~~~~~~~~~~~~~~~~~~~~~~~~~~~~~~~~~~~~~~~~~~~~~~~~~~~~~~~~~~~~~~~~~ 1< c\leq 2, \\
\frac{14}{455746863}\zeta^2_{43},\ \
~~~~~~~~~~~~~~~~~~~~~~~~~~~~~~~~~~~~~~~~~~~~~~~~~~~~~~~~~~~~~~~~~~~~~~~~ 2< c\leq 3,\\
\frac{14}{455746863}(4-c)^{23} \left(c^4+20 c^3+15 c^2-166 c+58\right),\ \
~~~~~~~~~~~~~~~~~~~~~~~~ 3< c\leq 4.
\end{cases}
\end{align*}
where
\begin{align*}
\zeta^2_{32}=&-2 c^{17}+68 c^{16}-952 c^{15}+5984 c^{14}-6188 c^{13}-210392 c^{12}+2165800 c^{11}-12602304 c^{10}\\
&+50803038 c^9-148864716 c^8+321854676 c^7-514965360 c^6+606448752 c^5\\
&-517823264 c^4+311355748 c^3-124876832 c^2+29971221 c-3254970\\
\zeta^2_{42}=&-3 c^{27}+216 c^{26}-6669 c^{25}+101790 c^{24}-637650 c^{23}-5920200 c^{22}+216087300 c^{21}\\
&-3344913000 c^{20}+36142228980 c^{19}-297557145600 c^{18}+1923619208940 c^{17}\\
&-9918848071080 c^{16}+41224381129620 c^{15}-139176493635600 c^{14}+383891999309100 c^{13}\\
&-868669502439960 c^{12}+1616404010663520 c^{11}-2475012726838080 c^{10}\\
&+3114293148449340 c^9-3208489645818600 c^8+2688680200441950 c^7\\
&-1813643235261000 c^6+969395892583950 c^5-400947964192620 c^4+123695368658550 c^3\\
&-26784799138656 c^2+3630982483332 c-231837186488\\
\zeta^2_{43}=&3 c^{27}-216 c^{26}+6669 c^{25}-101790 c^{24}+432900 c^{23}+15715440 c^{22}-441942930 c^{21}\\
&+6702258420 c^{20}-72290674170 c^{19}+595114291200 c^{18}-3835663834860 c^{17}\\
&+19460223006120 c^{16}-76541709247380 c^{15}+219451115362320 c^{14}\\
&-348837579341100 c^{13}-516681174695400 c^{12}+6258220890948000 c^{11}\\
&-26964761415134400 c^{10}+80427656700697020 c^9-184373541679041000 c^8\\
&+334653488151904350 c^7-483245113519139400 c^6+550261669854516750 c^5\\
&-484028759772387180 c^4+317470979938360950 c^3-146172645886501728 c^2\\
&+42141647696842116 c-5722716024060344
\end{align*}

\subsection*{Acknowledgements}
~~~~The financial support of the Macau Science and Technology Development Fund under grant number FDCT 130/2014/A3, and grant number FDCT 023/2017/A1
is gratefully acknowledged. We would also like to thank the University of Macau for generous support: MYRG 2014-00011 FST, MYRG 2014-00004 FST.

\bibliographystyle{plain}

\end{document}